\newtheorem{theorem}{Theorem}[section]
\newtheorem{assumption}[theorem]{Assumption}
\newtheorem{lemma}[theorem]{Lemma}
\newtheorem{remark}[theorem]{Remark}
\newtheorem{corollary}[theorem]{Corollary}
\newtheorem{proposition}[theorem]{Proposition}  
\newcommand{\Deg}{\mathsf{D}}
\newcommand{\Adj}{\mathsf{A}}
\newcommand{\Lap}{\mathsf{L}}
\newcommand{\vertices}{\mathcal{V}}
\newcommand{\edges}{\mathcal{E}}
\newcommand{\Bgraph}{\mathcal{G}}
\newcommand{\norm}[1]{\ensuremath{\| #1 \|}}
\newcommand{\real}{{\mathbb{R}}}
\newcommand{\realex}{\overline{\mathbb{R}}}
\newcommand{\realnonnegative}{{\mathbb{R}}_{\ge 0}}
\newcommand{\integerspositive}{\mathbb{Z}_{\geq 1}}
\newcommand{\identity}[1]{\mathsf{I}_{#1}}
\newcommand{\Lie}{\mathcal{L}}
\newcommand{\gradient}{\nabla}
\newcommand{\setdef}[2]{\{#1 \; | \; #2\}}
\newcommand{\setdefbig}[2]{\big\{#1 \; | \; #2\big\}}
\newcommand{\eps}{\epsilon}
\newcommand{\argmin}{\operatorname{argmin}}
\newcommand{\until}[1]{\{1,\dots,#1\}}
\newcommand{\map}[3]{#1:#2 \rightarrow #3}
\newcommand{\epi}{\mathrm{epi}}
\newcommand{\BB}{\mathcal{B}}
\newcommand{\CC}{\mathcal{C}}
\newcommand{\II}{\mathcal{I}}
\renewcommand{\SS}{\mathcal{S}}
\newcommand{\KK}{\mathcal{K}}
\newcommand{\YY}{\mathcal{Y}}
\newcommand{\FF}{\mathcal{F}}
\newcommand{\GG}{\mathcal{G}}
\newcommand{\HH}{\mathcal{H}}
\newcommand{\NN}{\mathcal{N}}
\newcommand{\EE}{\mathcal{E}}
\newcommand{\MM}{\mathcal{M}}
\newcommand{\PP}{\mathcal{P}}
\newcommand{\VV}{\mathcal{V}}
\newcommand{\XX}{\mathcal{X}}
\newcommand{\intr}{\mathrm{int}}
\newcommand{\ri}{\mathrm{ri}}
\newcommand{\aff}{\mathrm{aff}}
\newcommand{\cl}{\mathrm{cl}}
\newcommand{\ones}{\mathbf{1}}
\newcommand{\zeros}{\mathbf{0}}
\newcommand{\Eb}{\mathbb{E}}
\newcommand{\Pb}{\mathbb{P}}
\newcommand{\Qb}{\mathbb{Q}}
\newcommand{\lm}{\lambda}
\newcommand{\Xihat}{\widehat{\Xi}}
\newcommand{\xihat}{\widehat{\xi}}
\newcommand{\data}[1]{\xihat^{\,#1}}
\newcommand{\xival}[1]{\xi^{#1}}
\newcommand{\xibval}[1]{\overline{\xi}^{\, #1}}
\newcommand{\xval}[1]{x^{#1}}
\newcommand{\lmval}[1]{\lambda^{#1}}
\newcommand{\nuval}[1]{\nu^{#1}}
\newcommand{\etaval}[1]{\eta^{#1}}
\newcommand{\what}{\widehat{w}}
\newcommand{\yhat}{\widehat{y}}
\newcommand{\xhat}{\widehat{x}}
\newcommand{\Jhat}{\widehat{J}}
\newcommand{\PPhat}{\widehat{\PP}}
\newcommand{\Pbhat}{\widehat{\Pb}}
\newcommand{\st}{\operatorname{subject \text{$\, \,$} to}}
\newcommand{\lmvec}{\lambda_{\operatorname{v}}}
\newcommand{\xvec}{x_{\operatorname{v}}}
\newcommand{\yvec}{y_{\operatorname{v}}}
\newcommand{\xo}{x_{*}}
\newcommand{\yo}{y_{*}}
\newcommand{\zo}{z_{*}}
\newcommand{\xt}{\tilde{x}}
\newcommand{\yt}{\tilde{y}}
\newcommand{\xit}{\tilde{\xi}}
\newcommand{\xvb}{\overline{x}_{\operatorname{v}}}
\newcommand{\lmvb}{\overline{\lm}_{\operatorname{v}}}
\newcommand{\xvecb}{\overline{x}_{\operatorname{v}}} 
\newcommand{\lmvecb}{\overline{\lm}_{\operatorname{v}}} 
\newcommand{\Laug}{L_{\operatorname{aug}}}
\newcommand{\Laugt}{\tilde{L}_{\operatorname{aug}}}
\newcommand{\Laugr}{\overline{L}_{\operatorname{aug}}}
\newcommand{\spLaugt}{X_{\text{sp}}}
\renewcommand{\until}[1]{[#1]}
\newcommand{\proj}{\mathrm{proj}}
\newcommand{\cvar}{\mathrm{CVaR}}
\newcommand{\val}{\mathrm{val}}
\newcommand{\rad}{\eps}
\newcommand{\rads}{\eps^2}
\newcommand{\oprocendsymbol}{\hbox{$\bullet$}}
\newcommand{\oprocend}{\relax\ifmmode\else\unskip\hfill\fi\oprocendsymbol}
\newcommand{\longthmtitle}[1]{\mbox{}\textup{\textsl{(#1):}}}
\begin{document}

\title{Cooperative data-driven distributionally robust
  optimization\thanks{A preliminary version of this work appeared at
    the 2017 Allerton Conference on Communication, Control, and
    Computing, Monticello, Illinois as~\cite{AC-JC:17-allerton}.}}

\author{Ashish Cherukuri \qquad Jorge Cort\'{e}s\thanks{A. Cherukuri,
    J.~Cort\'{e}s are with the Dept. of~Mechanical and Aerospace
    Engineering, UC San Diego,
    \texttt{\{acheruku,cortes\}@ucsd.edu}.}}

\maketitle 

\begin{abstract}
  This paper studies a class of multiagent stochastic optimization
  problems where the objective is to minimize the expected value of a
  function which depends on a random variable.  
  The probability
  distribution of the random variable is unknown to the agents, so
  each one gathers samples of it. 
  The agents then aim to cooperatively
  find, using their data, a solution to the optimization problem with
  guaranteed out-of-sample performance. The approach is to formulate a
  data-driven distributionally robust optimization problem using
  Wasserstein ambiguity sets, which turns out to be equivalent to a
  convex program. We reformulate the latter as a distributed
  optimization problem and identify a convex-concave augmented
  Lagrangian function whose saddle points are in correspondence with
  the optimizers provided a min-max interchangeability criteria is
  met. Our distributed algorithm design then consists of the
  saddle-point dynamics associated to the augmented Lagrangian. We
  formally establish that the trajectories of the dynamics converge
  asymptotically to a saddle point and hence an optimizer of the
  problem.  Finally, we provide a class of functions that meet the
  min-max interchangeability criteria. Simulations illustrate our
  results.
\end{abstract}

\section{Introduction}\label{sec:intro}

Stochastic optimization in the context of multiagent systems has
numerous applications, such as target tracking, distributed
estimation, and cooperative planning and learning.  Solving
  stochastic optimization problems, in an exact sense,
  requires the knowledge of the probability distribution of the random
  variables. Even then, computing this optimizer is computationally
  burdensome because of the expectation operator. To mitigate this
  problem, researchers have studied numerous sample-based methods that
  provide tractable ways of approximating the optimizer. One of the
  concerns of such methods is obtaining out-of-sample performance,
  avoiding overfitting. The concern is more pressing when only a few
  samples are available, typically in applications where acquiring
  samples is expensive due to the size and complexity of the system or
  when decisions must be taken in real time, leaving less room for
  gathering many samples.  Distributionally robust optimization (DRO)
  provides a regularization framework that guarantees good
  out-of-sample performance even when the data is disturbed and not
  sampled from the true distribution. Motivated by this, we consider
  here the task for a group of agents to collaboratively find a
  data-driven solution for a stochastic optimization problem using the
  tools provided by the DRO framework.

\emph{Literature review:} Stochastic optimization is a classical
topic~\cite{AS-DD-AR:14}. To the large set of methods available to
solve this type of problems, a recent addition is data-driven
distributionally robust optimization, see
e.g.,~\cite{PME-DK:18, DB-VG-NK:18, RG-AJK:16-arXiv,FL-SM:17-arXiv, CZ:14-thesis} and
references therein. In this setup, the distribution of the random
variable is unknown and so, a worst-case optimization is carried over
a set of distributions, termed ambiguity set.
This worst-case optimization provides probabilistic performance bounds
for the original stochastic optimization~\cite{PME-DK:18,
  JB-YK-KM:17-arXiv} and overcomes the problem of overfitting.  One
way of designing the ambiguity sets is to consider the set of
distributions that are close (in some distance metric over the space
of distributions) to some reference distribution constructed from the
available data. Depending on the metric, one gets different ambiguity
sets with different performance bounds. Some popular metrics are
$\phi$-divergence~\cite{RJ-YG:16}, Prohorov metric~\cite{EE-GI:06},
and Wasserstein distance~\cite{PME-DK:18}. Here, we consider ambiguity
sets defined using the Wasserstein metric.
In~\cite{DB-VG-NK:18}, the ambiguity set is constructed with
  distributions that pass a goodness-of-fit test. In addition to
  data-driven methods, other works on distributionally robust
  optimization consider ambiguity sets defined using moment
  constraints~\cite{ED-YY:10, WW-DK-MS:14} and the KL-divergence
  distance~\cite{ZH-LJH:13}.  Tractable reformulations for the
data-driven DRO methods have been well
studied~\cite{PME-DK:18,RG-AJK:16-arXiv,JB-KM:17-arXiv}.  However,
designing coordination algorithms to find a data-driven solution when
the data is gathered in a distributed way by a network of agents has
not been investigated. This is the focus of this paper.
Our work has connections with the growing body of literature on
distribution optimization problems~\cite{DPB-JNT:97,MGR-RDN:05,PW-MDL:09} 
and agreement-based algorithms to solve them, see
e.g.,~\cite{AN-AO:09,BJ-MR-MJ:09,MZ-SM:12,JW-NE:11,AN:14-sv,BG-JC:14-tac}
and references therein. 

Besides data-driven DRO, one can solve the stochastic
  optimization problem considered here via other sampling-based
  methods, see~\cite{THM-GB:14}. Among these, sample average
  approximation (SAA) and stochastic approximation (SA) have received
  much attention because of their simple implementation and
  finite-sample guarantees independent of the dimension of the
  uncertainty, see e.g.~\cite[Chapter 5]{AS-DD-AR:14}
  and~\cite{AN-AJ-GL-AS:09}. However, such guarantees need not hold
  when the samples are corrupted and may require stricter assumptions
  on the cost function and the feasibility set. In contrast, the
  sample guarantees of the data-driven DRO method hold for more
  general settings, see e.g.,~\cite{PME-DK:18,JB-YK-KM:17-arXiv}, but
  are (potentially) more conservative and do not scale well with the
  size of the uncertainty parameter. Additionally, the complexity of
  solving a data-driven DRO is often worse than that of the SAA and SA
  methods.

\emph{Statement of contributions:} Our starting point is a multiagent
stochastic optimization problem involving the minimization of the
expected value of an objective function with a decision variable and a
random variable as arguments. The probability distribution of the
random variable is unknown and instead, agents collect a finite set of
samples of it. Given this data, each agent can individually find a
data-driven solution of the stochastic optimization.  However, agents
wish to cooperate to leverage on the data collected by everyone in the
group.  Our approach consists of formulating a distributionally robust
optimization problem over ambiguity sets defined as neighborhoods of
the empirical distribution under the Wasserstein metric. The solution
of this problem has guaranteed out-of-sample performance for the
stochastic optimization.  Our first contribution is the reformulation
of the DRO problem to display a structure amenable to distributed
algorithm design. We achieve this by augmenting the decision variables
to yield a convex optimization whose objective function is the
aggregate of individual objectives and whose constraints involve
consensus among neighboring agents.  Building on an augmented version
of the associated Lagrangian function, we identify a convex-concave
function which under a min-max interchangeability condition has the
property that its saddle-points are in one-to-one correspondence with
the optimizers of the reformulated problem.  Our second contribution
is the design of the saddle-point dynamics for the identified
convex-concave Lagrangian function. We show that the proposed dynamics
is distributed and provably correct, in the sense that its
trajectories asymptotically converge to a solution of the original
stochastic optimization problem.  Our third contribution is the
identification of two broad class of objective functions for which the
min-max interchangeability holds.  The first class is the set of
functions that are convex-concave in the decision and the random
variable, respectively. The second class is where functions are
convex-convex and have some additional structure: they are either
quadratic in the random variable or they correspond to the loss
function of the least-squares problem.  Finally, we illustrate our
results in simulation.

\section{Preliminaries}\label{sec:prelims}

This section introduces notation and basic notions on graph theory,
convex analysis, and stability of discontinuous dynamical systems. A reader already familiar with these concepts can safely skip it.

\subsubsection{Notation}\label{subsec:notation}
Let $\real$, $\realnonnegative$, and $\integerspositive$ denote the set
of real, nonnegative real, and positive integer numbers. The extended
reals are denoted as $\realex = \real \cup \{-\infty,\infty\}$. 
For a positive integer $n \in \integerspositive$, the set $\until{n} := \{1, \dots , n\}$.
We let
$\norm{\cdot}$ denote the $2$-norm on $\real^n$. We use the notation
$B_\delta(x):=\setdef{y \in \real^n}{\norm{x-y} < \delta}$.
Given $x,y\in \real^n$, $x_i$ denotes the $i$-th component of
$x$, and $x \le y$ denotes $x_i \le y_i$ for $i \in \until{n}$. For
vectors $u \in \real^n$ and $w \in \real^m$, the vector $(u;w) \in
\real^{n+m}$ denotes their concatenation.  We use the shorthand notation
$\zeros_n = (0,\ldots,0) \in \real^n$, $\ones_n=(1,\ldots,1) \in
\real^n$, and $\identity{n} \in \real^{n \times n}$ for the identity
matrix. For $A \in \real^{n_1 \times n_2}$ and $B \in \real^{m_1 \times
m_2}$, $A \otimes B \in \real^{n_1 m_1 \times n_2 m_2}$ is the Kronecker
product. The Cartesian product of $\{\SS_i\}_{i=1}^n$ is denoted by
$\prod_{i=1}^n \SS_i := \SS_1 \times \dots \times \SS_n$. The interior
of a set $\SS \subset \real^n$ is denoted by $\intr(\SS)$.  For a
function $f:\real^n \times \real^m \to \real$, $(x,\xi) \mapsto
f(x,\xi)$, we denote the partial derivative of $f$ with respect to the
first argument by $\gradient_x f$ and with respect to the second
argument by $\gradient_\xi f$.  The higher-order derivatives follow the
convention $\gradient_{x \xi} f = \frac{\partial^2 f}{\partial x
\partial \xi}$, $\gradient_{xx} f = \frac{\partial^{2} f}{\partial
x^2}$, and so on. Given $\map{V}{\XX}{\realnonnegative}$, we denote the
$\delta$-sublevel set as $V^{-1}(\le \delta):=\setdef{x \in \XX}{V(x)
\le \delta}$.

\subsubsection{Graph theory}\label{subsec:graph}
Following~\cite{FB-JC-SM:08cor}, an \emph{undirected graph}, or simply
a \emph{graph}, is a pair $\GG=(\VV, \EE)$, where $\VV = \until{n}$ is
the vertex set and $\EE \subseteq \VV \times \VV$ is the edge set with
the property that $(i,j) \in \EE$ if and only if $(j,i) \in \EE$. A
path is an ordered sequence such that any ordered pair of vertices
appearing consecutively is an edge. A graph is \emph{connected} if
there is a path between any pair of distinct vertices.  Let $\NN_i
\subseteq \VV$ denote the set of neighbors of vertex $i \in \VV$,
i.e., $\NN_i = \setdef{j \in \VV}{(i,j) \in \edges}$.  A
\emph{weighted graph} is a triplet $\Bgraph=(\VV,\EE,\Adj) $, where
$(\VV,\EE)$ is a digraph and $\Adj \in \mathbb{R}^{n\times n}_{\geq0}$
is the (symmetric) \emph{adjacency matrix} of $\Bgraph$, with the
property that $a_{ij}>0 $ if $ (i,j)\in \edges $ and $a_{ij}=0$,
otherwise. 
The \emph{weighted degree} of $i \in \until{n}$ is $w_i = \sum_{j=1}^n
a_{ij}$.  The \emph{weighted degree} matrix $\Deg$ is the diagonal
matrix defined by $(\Deg)_{ii} = w_i$, for all $i \in \until{n}$.  The
\emph{Laplacian} matrix is $\Lap = \Deg - \Adj$. Note that $\Lap =
\Lap^\top$ and $\Lap \ones_n = 0$. If $\GG$ is connected, then zero is
a simple eigenvalue of $\Lap$.

\subsubsection{Convex analysis}\label{subsec:convex-a}

Here we introduce elements from convex analysis
following~\cite{RTR:97}.  A set $C \subset \real^n$ is \emph{convex}
if $(1-\lm)x+\lm y \in C$ whenever $x \in C$, $y \in C$, and $\lm \in
(0,1)$. A vector $\varphi \in \real^n$ is \emph{normal} to a convex
set $C$ at a point $x \in C$ if $(y - x)^\top \varphi \le 0$ for all
$y \in C$. The set of all vectors normal to $C$ at $x$, denoted
$N_C(x)$, is the \emph{normal cone} to $C$ at $x$. The \emph{affine
  hull} of $S \subset \real^n$ is the smallest affine space
containing~$S$,
\begin{align*}
  \aff(S):=\setdefbig{\sum_{i=1}^k \lm_i x_i}{k \in \integerspositive,
    x_i \in S, \lm_i \in \real, \sum_{i=1}^k \lm_i = 1}.
\end{align*}
The \emph{relative interior} of a convex set $C$ is the interior of
$C$ relative to the affine hull of $C$. Formally,
\begin{align*}
  \ri(C) \! := \! \setdef{x \! \in \! \aff(C)}{\exists \eps > 0, (x+\eps
  B_1(0)) \cap (\aff(C) ) \! \subset C}.
\end{align*}
Given a convex set $C$, a vector $d$ is a \emph{direction of
  recession} of $C$ if $x+\alpha d \in C$ for all $x \in C$ and
$\alpha \ge 0$.

A convex function $\map{f}{\real^n}{\realex}$ is \emph{proper} if
there exists $x \in \real^n$ such that $f(x) < +\infty$ and $f$ does
not take the value $-\infty$ anywhere in $\real^n$. The
\emph{epigraph} of $f$ is the set
\begin{align*}
  \epi f := \setdef{(x,\lm) \in (\real^n \times \realex)}{\lm \ge
    f(x)}.
\end{align*}
A function $f$ is closed if $\epi f$ is a closed set.  The function
$f$ is convex if and only if $\epi f$ is convex. For a closed proper
convex function $f$, a vector $d$ is a \emph{direction of recession}
of $f$ if $(d,0)$ is a direction of recession of the set $\epi
f$. Intuitively, it is the direction along which $f$ is monotonically
non-increasing. If $f(x) \to +\infty$ whenever $\norm{x} \to +\infty$,
then $f$ does not have a direction of recession.

A function $\map{F}{\XX \times \YY}{\realex}$ is \emph{convex-concave}
(on $\XX \times \YY$) if, given any point $(\xt,\yt) \in \XX \times
\YY$, $x \mapsto F(x,\yt)$ is convex and $y \mapsto F(\xt,y)$ is
concave. When the space $\XX \times \YY$ is clear from the context, we
refer to this property as $F$ being convex-concave in $(x,y)$. A point
$(\xo,\yo) \in \XX \times \YY$ is a \emph{saddle point} of $F$ over the
set $\XX \times \YY$ if $F(\xo,y) \le F(\xo,\yo) \le F(x,\yo)$, for all
$x \in \XX$ and $y \in \YY$. The set of saddle points of a
convex-concave function~$F$ is convex.  Each saddle point is a critical
point of $F$, i.e., if $F$ is differentiable, then $\gradient_x
F(\xo,\zo) =0$ and $\gradient_z F(\xo,\zo) =0$.  Additionally, if $F$ is
twice differentiable, then $\gradient_{xx} F(\xo,\zo) \succeq 0$ and
$\gradient_{zz} F(\xo,\zo) \preceq 0$.  Given a convex-concave function
$\map{F}{\real^n \times \real^m}{\realex}$, define 
\begin{align*} 
  \XX & := \setdef{x \in \real^n}{F(x,y) < +\infty \text{ for all } y
\in \real^m},
  \\
  \YY & := \setdef{y \in \real^m}{F(x,y) > - \infty \text{ for all } x
\in \real^n}.
\end{align*}
The product set $\XX \times \YY$ is called the \emph{effective domain}
of $F$. The sets $\XX$, $\YY$ and so, $\XX \times \YY$ are convex. Note
that $F$ is finite on $\XX \times \YY$. If $\XX \times \YY$ is nonempty,
then $F$ is called \emph{proper}. If the
following equality holds
\begin{align*}
  \sup_{y \in \YY} \inf_{x \in \XX} F(x,y) = \inf_{x \in \XX} \sup_{y
  \in \YY} F(x,y),
\end{align*}
then this common value is called the \emph{saddle value} of $F$. The
function $F$ is
\emph{closed} if for any $(x,y) \in \XX \times \YY$, the functions
$x \mapsto F(x,y)$ and $y \mapsto -F(x,y)$ are closed. 

\begin{theorem}\longthmtitle{Existence of finite saddle value and saddle
    point~\cite[Theorem 37.3 \& 37.6]{RTR:97}}\label{th:saddle-value}
  Let $\map{F}{\real^n \times \real^m}{\realex}$ be a closed proper
  convex-concave function with effective domain $\XX \times \YY \subset
  \real^n \times \real^m$. 
  If the following conditions hold,
  \begin{enumerate}
    \item The convex functions $x \mapsto F(x,y)$ for $y \in \ri(\YY)$ have no
      common direction of recession;
    \item The convex functions $y \mapsto -F(x,y)$ for $x \in \ri(\XX)$ have
      no common direction of recession;
  \end{enumerate}
  then the saddle value must be finite, there exists a saddle point of
  $F$ in the effective domain $\XX \times \YY$ and the saddle value is
  attained at the saddle point.
\end{theorem}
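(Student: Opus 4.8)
This statement is \cite[Theorems 37.3 and 37.6]{RTR:97}; the plan below recalls the architecture of that argument.

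\textbf{Reduction to attainment of two marginals plus a minimax equality.} Introduce the marginal functions $g(x) := \sup_{y \in \YY} F(x,y)$ and $h(y) := \inf_{x \in \XX} F(x,y)$. As a pointwise supremum of the closed convex functions $x \mapsto F(x,y)$, the function $g$ is closed and convex; dually, $-h = \sup_{x \in \XX}(-F(x,\cdot))$ is a pointwise supremum of the closed convex functions $y \mapsto -F(x,y)$, so $h$ is closed and concave. Fixing any $y_0 \in \YY$ (nonempty, since $F$ is proper) gives $g(x) \ge F(x,y_0) > -\infty$ for every $x$; symmetrically, fixing $x_0 \in \XX$ gives $h(y) \le F(x_0,y) < +\infty$ for every $y$. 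The elementary bound $h(y) \le F(x,y) \le g(x)$, valid for all $(x,y)$, yields weak duality $\sup_{y} h(y) \le \inf_{x} g(x)$, together with $\inf_x g(x) = \inf_x \sup_y F$ and $\sup_y h(y) = \sup_y \inf_x F$. It therefore suffices to establish: (a) $g$ attains a finite infimum at some $\xo$; (b) $h$ attains a finite supremum at some $\yo$; (c) $g(\xo) = h(\yo)$. Granting (a)--(c), the chain $F(\xo,y) \le g(\xo) = h(\yo) \le F(x,\yo)$, valid for all $x,y$, becomes an equality when evaluated at $y = \yo$ and at $x = \xo$, so $(\xo,\yo)$ is a saddle point in $\XX \times \YY$ and $g(\xo)$ is the finite saddle value.

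\textbf{Coercivity of the marginals from the recession hypotheses.} For fixed $y$ and arbitrary $d$, dividing $g(x+td) - g(x) \ge F(x+td,y) - g(x)$ by $t$ and letting $t \to \infty$ shows that the recession function of $g$ dominates that of the slice $x \mapsto F(x,y)$; hence any direction of recession of $g$ is a direction of recession common to all the slices $x \mapsto F(x,y)$ with $y \in \ri(\YY)$, and by condition (i) there is no such nonzero direction. Thus $g$ has no nonzero direction of recession, and a standard argument (using closedness of $F$ and this fact) gives that $g$ is a closed proper convex function. A closed proper convex function with no nonzero direction of recession has bounded, hence compact, nonempty sublevel sets, so it attains a finite infimum: this is (a). Running the same reasoning on $-h$ with condition (ii) gives (b). Combined with weak duality, $\sup_y h(y) \le \inf_x g(x)$ with both quantities finite.

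\textbf{The minimax equality, and the main obstacle.} The crux is (c), where the two conditions must genuinely be combined rather than used in isolation. One route exploits the coercivity just obtained: the set of minimizers of $g$ is nonempty and bounded, so one may restrict $x$ to a compact convex neighborhood of it and $y$ to an exhausting increasing family of compact convex subsets of $\ri(\YY)$, apply a compactness-based minimax theorem to each truncated problem, and pass to the limit --- the recession hypotheses being precisely what controls the behavior near the boundary of the effective domain and prevents the truncated optimizers from escaping. The alternative, which is Rockafellar's, runs through Fenchel conjugacy: from $0 \in \partial g(\xo)$ and a Danskin-type description of $\partial g(\xo)$ in terms of $\partial_x F(\xo,\cdot)$ evaluated at the maximizers of $y \mapsto F(\xo,y)$ --- legitimate here because of the partial closedness contained in the hypothesis that $F$ is closed --- one extracts a point $y^\star$ for which $(\xo,y^\star)$ is already a saddle point, whence $g(\xo) = h(y^\star) = \sup_y h(y)$. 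I expect this interchange of $\sup$ and $\inf$ (the conjugacy/Danskin step, or equivalently the limiting argument on the truncations) to be the main difficulty; the remainder is recession-cone bookkeeping together with the standard compactness of sublevel sets of coercive closed proper convex functions.
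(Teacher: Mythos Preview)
The paper does not supply its own proof of this theorem; it is quoted in the preliminaries directly from Rockafellar~\cite[Theorems~37.3 and~37.6]{RTR:97} and used later as a black box (in the proof of Lemma~\ref{le:interchange}). Your outline faithfully reconstructs the architecture of Rockafellar's argument --- marginal functions, recession-cone coercivity yielding attainment, and the conjugacy/limiting step for the minimax equality --- so there is nothing in the paper to contrast it with, and as a sketch of the cited source it is sound.
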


\subsubsection{Discontinuous dynamical systems}\label{subsec:disc}

Here we present notions of discontinuous and projected dynamical systems
from~\cite{AB-FC:06,JC:08-csm-yo,AN-DZ:96}. Let $\map{f}{\real^n}{\real^n}$ be
a Lebesgue measurable and locally bounded function, and consider
\begin{equation}\label{eq:dis-dyn}
  \dot x = f(x) .
\end{equation}
A map $\map{\gamma}{[0,T)}{\real^n}$ is a \emph{(Caratheodory)
  solution} of~\eqref{eq:dis-dyn} on the interval $[0,T)$ if it is
absolutely continuous on $[0,T)$ and satisfies $\dot \gamma(t) =
f(\gamma(t))$ almost everywhere in $[0,T)$.  We use the terms solution
and trajectory interchangeably.  A set $\SS \subset \real^n$ is
\emph{invariant} under~\eqref{eq:dis-dyn} if every solution starting
in $\SS$ remains in $\SS$.  For a solution $\gamma$
of~\eqref{eq:dis-dyn} defined on the time interval $[0,\infty)$, the
\emph{omega-limit} set $\Omega(\gamma)$ is defined by
\begin{multline*}
  \Omega(\gamma) = \setdef{y \in \real^n}{\text{there exists } 
    \{t_k\}_{k=1}^{\infty} \subset [0,\infty) \text{ with } 
      \\
      \lim_{k \to \infty} t_k = \infty \text{ and } \lim_{k \to \infty}
    \gamma(t_k) = y} .
\end{multline*}
If the solution $\gamma$ is bounded, then $\Omega (\gamma) \neq
\emptyset$ by the Bolzano-Weierstrass theorem~\cite[p. 33]{SL:93}.
Given a continuously differentiable function
$\map{V}{\real^n}{\real}$, the \emph{Lie derivative of $V$
  along~\eqref{eq:dis-dyn}} at $x \in \real^n$ is $\Lie_f V(x) =
\gradient V(x)^\top f(x)$.  The next result is a simplified version
of~\cite[Proposition 3]{AB-FC:06}.

\begin{proposition}\longthmtitle{Invariance principle for
    discontinuous Caratheodory systems}\label{pr:invariance-cara}
  Let $\SS \subset \real^n$ be compact and invariant. Assume that, for
  each point $x_0 \in \SS$, there exists a unique solution
  of~\eqref{eq:dis-dyn} starting at $x_0$ and that its omega-limit set
  is invariant too. Let $\map{V}{\real^n}{\real}$ be a continuously
  differentiable map such that $\Lie_f V(x) \le 0$ for all $x \in
  \SS$. Then, any solution of~\eqref{eq:dis-dyn} starting at $\SS$
  converges to the largest invariant set in $\cl(\setdef{x \in
    \SS}{\Lie_f V(x) = 0})$.
\end{proposition}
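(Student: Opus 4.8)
The plan is to run the classical LaSalle invariance argument, adapted to Caratheodory solutions. Fix a solution $\gamma$ of~\eqref{eq:dis-dyn} starting at some $x_0 \in \SS$. Since $\SS$ is invariant, $\gamma(t)$ stays in $\SS$ throughout its interval of existence; since $\SS$ is compact and $f$ is locally bounded, $\gamma$ cannot escape in finite time, so it is defined on all of $[0,\infty)$ and is bounded. Consequently $\Omega(\gamma)$ is nonempty, compact, and (because $\SS$ is closed) contained in $\SS$; moreover a standard compactness argument gives that $\gamma(t)$ approaches $\Omega(\gamma)$ as $t\to\infty$ (otherwise one finds $t_k\to\infty$ with $\gamma(t_k)$ staying at distance $\ge\eps$ from $\Omega(\gamma)$, extracts a convergent subsequence, and contradicts the definition of the omega-limit set). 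It therefore suffices to show that $\Omega(\gamma)$ is contained in the largest invariant set inside $\cl(\setdef{x\in\SS}{\Lie_f V(x)=0})$.

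Next I would track $V$ along $\gamma$. Because $\gamma$ is absolutely continuous and $V$ is continuously differentiable, $t\mapsto V(\gamma(t))$ is absolutely continuous with $\frac{d}{dt}V(\gamma(t)) = \gradient V(\gamma(t))^\top\dot\gamma(t) = \Lie_f V(\gamma(t)) \le 0$ for almost every $t$, so $V\circ\gamma$ is nonincreasing; since $V$ is continuous and $\SS$ is compact, $V$ is bounded below on $\SS$, and hence $V(\gamma(t))$ converges to a finite limit $c$ as $t\to\infty$. Continuity of $V$ then forces $V\equiv c$ on $\Omega(\gamma)$: any $y\in\Omega(\gamma)$ is a limit $\gamma(t_k)\to y$ with $V(\gamma(t_k))\to c$, so $V(y)=c$.

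Then I would invoke the hypothesis that $\Omega(\gamma)$ is invariant. Pick $y\in\Omega(\gamma)$ and let $\gamma_y$ be the (assumed unique) solution starting at $y$; invariance of $\Omega(\gamma)$ gives $\gamma_y(t)\in\Omega(\gamma)$ for all $t\ge 0$, so by the previous step $V(\gamma_y(t))=c$ is constant, whence $\frac{d}{dt}V(\gamma_y(t)) = \Lie_f V(\gamma_y(t)) = 0$ for almost every $t\ge 0$. Choosing a sequence $t_k\downarrow 0$ with $\Lie_f V(\gamma_y(t_k))=0$ and using continuity of $\gamma_y$ gives $\gamma_y(t_k)\to y$ with each $\gamma_y(t_k)\in\Omega(\gamma)\subseteq\SS$, hence $y\in\cl(\setdef{x\in\SS}{\Lie_f V(x)=0})$. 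So $\Omega(\gamma)$ is an invariant set contained in this closure, hence contained in the largest invariant set therein; combined with $\gamma(t)\to\Omega(\gamma)$, this yields the conclusion.

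The chain rule for $V\circ\gamma$ and the compactness arguments are routine. The step requiring the most care is the last one: since $f$, and therefore $\Lie_f V$, is only measurable, $\Lie_f V$ need not vanish identically on $\Omega(\gamma)$ — only along a full-measure-in-time set on each trajectory lying in $\Omega(\gamma)$ — which is exactly why the statement closes the set $\setdef{x\in\SS}{\Lie_f V(x)=0}$, and making the closure (rather than the bare set) come out of the argument is the main subtlety. The two standing hypotheses, invariance of $\Omega(\gamma)$ and uniqueness of solutions from points of $\SS$, are precisely what allow the constancy of $V$ on $\Omega(\gamma)$ to be propagated into a statement about $\Lie_f V$; without them this last step fails.
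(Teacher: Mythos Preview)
The paper does not actually prove this proposition: it is stated in the preliminaries as ``a simplified version of~[Proposition~3]{AB-FC:06}'' and is simply quoted from that reference, with no argument given. So there is no paper proof to compare against.

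That said, your argument is the standard LaSalle invariance proof adapted to Caratheodory solutions, and it is correct. The key points --- monotonicity of $V\circ\gamma$ via absolute continuity and the chain rule, convergence of $V(\gamma(t))$ to a constant $c$, constancy of $V$ on $\Omega(\gamma)$, and then using the assumed invariance of $\Omega(\gamma)$ together with uniqueness to run a solution $\gamma_y$ inside $\Omega(\gamma)$ and conclude $\Lie_f V(\gamma_y(t))=0$ almost everywhere --- are all handled properly. Your final step, passing to the closure because $\Lie_f V$ is only known to vanish on a full-measure-in-time subset of each trajectory in $\Omega(\gamma)$, is exactly the right way to deal with the mere measurability of $f$, and your remark that this is precisely why the statement is phrased with $\cl(\cdot)$ is on point. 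This is essentially the argument one finds in~\cite{AB-FC:06}.
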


Projected dynamical systems are a particular class of discontinuous
dynamical systems. Let $\KK \subset \real^n$ be a closed convex set.  Given a
point $y \in \real^n$, the (point) projection of $y$ onto $\KK$ is
$\proj_{\KK}(y) = \argmin_{z \in \KK} \norm{z - y}$. Note that
$\proj_{\KK}(y)$ is a singleton and the map $\proj_{\KK}$ is Lipschitz
on $\real^n$ with constant $L = 1$~\cite[Proposition 2.4.1]{FHC:83}.
Given $x \in \KK$ and $v \in \real^n$, the (vector) projection of $v$
at $x$ with respect to~$\KK$~is
\begin{equation*}
  \Pi_{\KK}(x,v) = \lim_{\delta \to 0^+} \frac{\proj_{\KK}(x+\delta v) -
    x}{\delta} .
\end{equation*}
Given a vector field $\map{f}{\real^n}{\real^n}$ and a closed convex
polyhedron $\KK \subset \real^n$, the associated projected dynamical
system is
\begin{equation}\label{eq:pds}
  \dot x = \Pi_{\KK}(x,f(x)), \quad x(0) \in \KK,
\end{equation}
One can verify easily that for any $x \in \KK$, there exists an element
$\varphi_x$ belonging to the normal cone $N_{\KK}(x)$ such that
$\Pi_{\KK}(x,f(x)) = f(x) - \varphi_x$.  In particular, if $x$ is in the
interior of $\KK$, then this element is the zero vector and we have
$\Pi_{\KK}(x,f(x)) = f(x)$.  At any boundary point of $\KK$, the
projection operator restricts the flow of the vector field $f$ such that
the solutions of~\eqref{eq:pds} remain in $\KK$. Due to the projection,
the dynamics~\eqref{eq:pds} is in general discontinuous.   

\section{Data-driven stochastic optimization}\label{sec:data-driven}
This section sets the stage for the formulation of our approach to
deal with data-driven optimization in a distributed manner.  The
following material on data-driven stochastic optimization is taken
from~\cite{PME-DK:18} and included here to provide a
self-contained exposition. The reader familiar with these notions and
tools can safely skip this section.

Let $(\Omega,\FF,P)$ be a probability space and $\xi$ be a random
variable mapping this space to $(\real^m,B_\sigma(\real^m))$, where
$B_\sigma(\real^m)$ is the Borel $\sigma$-algebra on $\real^m$. Let
$\Pb$ and $\Xi \subseteq \real^m$ be the distribution and the support
of the random variable $\xi$. Assume that $\Xi$ is closed and convex.
Consider the stochastic optimization problem
\begin{equation}\label{eq:s-opt}
  \inf_{x \in \XX} \Eb_{\Pb} [f(x,\xi)],
\end{equation}
where $\XX \subseteq \real^n$ is a closed convex set, $\map{f}{\real^n
  \times \real^m}{\real}$ is a continuous function,
and $\Eb_{\Pb}[\, \cdot \,]$ is the expectation under the distribution
$\Pb$. Assume that $\Pb$ is unknown and so, solving~\eqref{eq:s-opt}
is not possible.  However, we are given $N$ independently drawn
samples 
$\Xihat := \{\data{k}\}_{k=1}^N \subset \Xi$ 
of the random
variable~$\xi$. Note that, until it is revealed, $\Xihat$ is a random
object with probability distribution $\Pb^N:=\prod_{i=1}^N \Pb$
supported on $\Xi^N:=\prod_{i=1}^N \Xi$.  The objective is to find a
\emph{data-driven} solution of~\eqref{eq:s-opt}, denoted $\xhat_N \in
\XX$, constructed using the dataset $\Xihat$, that has desirable
properties for the expected cost $\Eb_{\Pb} [f(\xhat_N,\xi)]$ under a
new sample. The property we are looking for is the \emph{finite-sample
  guarantee} given by 
\begin{equation}\label{eq:fs-guarantee}
  \Pb^N \Bigl(\Eb_{\Pb} [f(\xhat_N,\xi)] \le \Jhat_N \Bigr) \ge 1-\beta,
\end{equation}
where $\Jhat_N$ might also depend on the training dataset and $\beta
\in (0,1)$ is the parameter which governs $\xhat_N$ and $\Jhat_N$.
The quantities $\Jhat_N$ and $1-\beta$ are referred to as the
\emph{certificate} and the \emph{reliability} of the performance of
$\xhat_N$. The goal is to find a data-driven solution with a low
certificate and a high reliability. To do so, we use the available
information~$\Xihat$. The strategy is to determine a set $\PPhat_N$
of probability distributions supported on $\Xi$ so that minimization of the worst-case cost over $\PPhat_N$ results into a finite-sample guarantee.
The set $\PPhat_N$ is
referred to as the \emph{ambiguity} set.  Once such a set 
is designed, the certificate $\Jhat_N$ is defined as the optimal value
of the following \emph{distributionally robust optimization} problem
\begin{equation}\label{eq:dro}
  \Jhat_N := \inf_{x \in \XX} \sup_{\Qb \in \PPhat_N} \Eb_{\Qb}
  [f(x,\xi)].
\end{equation}
This is the worst-case optimal value considering all distributions in
$\PPhat_N$.  A good candidate for $\PPhat_N$ is the set of
distributions that are close (under a certain metric) to the uniform
distribution on $\Xihat$, termed the \emph{empirical distribution}.
Formally, the empirical distribution is
\begin{equation}
  \Pbhat_N := \frac{1}{N} \sum_{k=1}^N \delta_{\data{k}},
\end{equation}
where $\delta_{\data{k}}$ is the unit point mass at
$\data{k}$.  Let $\MM(\Xi)$ be the space of probability
distributions $\Qb$ supported on $\Xi$ with finite second moment, i.e.,
$\Eb_{\Qb} [\norm{\xi}^2] = \int_{\Xi} \norm{\xi}^2 \Qb(d \xi) < +\infty$.
The \emph{2-Wasserstein metric}
\footnote{We note that~\cite{PME-DK:18} employs the 1-Wasserstein metric
instead of the 2-Wasserstein metric considered here.
} $\map{d_{W_2}}{\MM(\Xi) \times \MM(\Xi)}{\realnonnegative}$ is
\begin{align}
  d_{W_2}(\Qb_1, \Qb_2)  =  \Bigl( \inf \Bigl\{\int_{\Xi^2}  \norm{\xi_1
  - & \xi_2}^2  \Pi(d \xi_1, d \xi_2) \Bigl| \notag
  \\
  &  \Pi \in \HH(\Qb_1,\Qb_2) \Bigr\} \Bigr)^{\frac{1}{2}},
  \label{eq:wasserstein}
\end{align}
where $\HH(\Qb_1,\Qb_2)$ is the set of all distributions on $\Xi \times
\Xi$ with marginals $\Qb_1$ and $\Qb_2$.  Given $\eps \ge 0$, we use the
notation 
\begin{align}\label{eq:ball-dist}
  \BB_{\eps}(\Pbhat_N) := \setdef{\Qb \in \MM(\Xi)}{d_{W_2}(\Pbhat_N,\Qb)
  \le \eps}
\end{align}
to define the set of distributions that are $\eps$-close to $\Pbhat_N$
under the defined metric. For an appropriately chosen radius
  $\eps$, the ambiguity set $\PPhat_N = \BB_{\rad}(\Pbhat_N)$,
  plugged in the distributionally robust optimization~\eqref{eq:dro},
  results into a finite-sample
  guarantee~\eqref{eq:fs-guarantee}. There might be different ways of
  establishing this fact. For example, in~\cite{PME-DK:18}, a bound
  for $\eps$ is provided under the assumption that $\Pb$ is
  light-tailed satisfying an exponential decay condition. The
  work~\cite{JB-YK-KM:17-arXiv}, on the other hand, considers more
  general distributions and gives a different, potentially tighter,
  finite-sample guarantee. However, in~\cite{JB-YK-KM:17-arXiv}, $f$
  is assumed to be either quadratic or log-exponential loss function.
 The focus of this work is on the design of distributed
  algorithms to solve~\eqref{eq:dro} with $\BB_{\rad}(\Pbhat_N)$ as
  the ambiguity set. To this end, the following tractable
  reformulation is key.
\begin{theorem}\longthmtitle{Tractable reformulation
    of~\eqref{eq:dro}}\label{th:tractable-dro}
  Assume that for all $\xit \in \Xi$, $x \mapsto f(x,\xit)$ is convex.
  Then, for 
  $N \in \integerspositive$, the optimal value of~\eqref{eq:dro} with
  the choice $\PPhat_N = \BB_{\rad}(\Pbhat_N)$ is equal to
  the optimum of the following convex optimization problem
  \begin{equation*}
     \inf_{\lm \ge 0, x \in \XX} \Bigl\{ \lm \rads +
       \frac{1}{N} \sum_{k=1}^N \max_{\xi \in \Xi} \Bigl( f(x,\xi) - \lm
       \norm{\xi - \data{k}}^2 \Bigr) \Bigr\}.
  \end{equation*}
\end{theorem}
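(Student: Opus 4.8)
The plan is to start from the inner supremum $\sup_{\Qb \in \BB_{\rad}(\Pbhat_N)} \Eb_{\Qb}[f(x,\xi)]$ for a fixed $x \in \XX$, and show that it equals the expression $\lm^\star \rads + \frac{1}{N}\sum_{k=1}^N \max_{\xi \in \Xi}(f(x,\xi) - \lm^\star \norm{\xi - \data{k}}^2)$ after taking an infimum over $\lm \ge 0$; composing with $\inf_{x \in \XX}$ then yields the claim. The first step is to rewrite the worst-case expectation as an optimization over joint distributions (couplings). Using the definition~\eqref{eq:wasserstein} of $d_{W_2}$ and~\eqref{eq:ball-dist}, a distribution $\Qb \in \BB_{\rad}(\Pbhat_N)$ corresponds to a coupling $\Pi \in \HH(\Pbhat_N, \Qb)$ with $\int_{\Xi^2}\norm{\xi_1-\xi_2}^2\,\Pi(d\xi_1,d\xi_2) \le \rads$; since the first marginal is the empirical distribution $\frac{1}{N}\sum_k \delta_{\data{k}}$, such a coupling decomposes as $\Pi = \frac{1}{N}\sum_{k=1}^N \delta_{\data{k}} \otimes \Qb_k$ for conditional distributions $\Qb_k \in \MM(\Xi)$. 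Hence the inner problem becomes
\begin{align*}
  \sup_{\Qb_1,\dots,\Qb_N \in \MM(\Xi)} \Bigl\{ \tfrac{1}{N}\sum_{k=1}^N \Eb_{\Qb_k}[f(x,\xi)] \ : \ \tfrac{1}{N}\sum_{k=1}^N \Eb_{\Qb_k}[\norm{\xi - \data{k}}^2] \le \rads \Bigr\}.
\end{align*}

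The second step is to dualize the single coupled constraint. Introducing a multiplier $\lm \ge 0$ and forming the Lagrangian, weak duality gives that the above is bounded above by
\begin{align*}
  \inf_{\lm \ge 0} \Bigl\{ \lm \rads + \sup_{\Qb_1,\dots,\Qb_N} \tfrac{1}{N}\sum_{k=1}^N \bigl( \Eb_{\Qb_k}[f(x,\xi)] - \lm \Eb_{\Qb_k}[\norm{\xi - \data{k}}^2] \bigr) \Bigr\},
\end{align*}
and the inner supremum now decouples across $k$. For each $k$, optimizing over $\Qb_k \in \MM(\Xi)$ the functional $\Eb_{\Qb_k}[f(x,\xi) - \lm\norm{\xi-\data{k}}^2]$ is a linear program over probability measures, whose value equals the pointwise supremum $\sup_{\xi \in \Xi}(f(x,\xi) - \lm\norm{\xi - \data{k}}^2)$ (attained, or approached, by point masses); one must check this supremum is finite for $\lm$ large enough — this is where convexity of $\xi \mapsto \lm\norm{\xi - \data{k}}^2$ dominating the continuous $f$, together with $f$ being real-valued, is used, so that for sufficiently large $\lm$ the bracket is coercive on the closed convex set $\Xi$ and the max is attained. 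This produces exactly the claimed objective $\lm\rads + \frac{1}{N}\sum_k \max_{\xi\in\Xi}(f(x,\xi) - \lm\norm{\xi-\data{k}}^2)$, giving the ``$\le$'' direction.

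The third step is to close the duality gap, i.e., prove the reverse inequality. The clean route is to invoke strong duality for the moment/semi-infinite problem: the constraint set $\{\Qb : d_{W_2}(\Pbhat_N,\Qb)\le\rad\}$ has nonempty relative interior (for $\rad>0$ it contains, e.g., small perturbations of $\Pbhat_N$; the boundary case $\rad=0$ is trivial since then $\Qb = \Pbhat_N$), so Slater's condition holds and the infimum over $\lm\ge 0$ is attained with no gap. Then one uses convexity of $x\mapsto f(x,\xit)$ to argue that the resulting joint minimization over $(\lm,x)$ is a convex program: $\lm\rads$ is linear, and each map $(x,\lm)\mapsto \max_{\xi\in\Xi}(f(x,\xi)-\lm\norm{\xi-\data{k}}^2)$ is a pointwise supremum of functions affine in $\lm$ and convex in $x$, hence jointly convex. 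I would cite the relevant duality result from Mohajerin~Esfahani--Kuhn \cite{PME-DK:18} (adapted from the $1$-Wasserstein to the $2$-Wasserstein case, the only change being $\norm{\cdot}$ replaced by $\norm{\cdot}^2$ in the transport cost) rather than reprove it.

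The main obstacle is the third step: rigorously justifying the absence of a duality gap in an infinite-dimensional conic program over measures, and handling the measure-theoretic subtleties — existence of worst-case couplings, the reduction to point masses when the inner max is attained, and verifying the constraint qualification uniformly enough that the result holds for every $x \in \XX$. The convexity bookkeeping and the decoupling across the $N$ samples are routine by comparison.
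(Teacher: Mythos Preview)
Your proposal is correct and takes essentially the same approach as the paper: the paper does not give a self-contained proof but simply states that the result and its proof are analogous to~\cite[Theorem~4.2]{PME-DK:18}, with the only modification being the passage from the $1$-Wasserstein to the $2$-Wasserstein metric (i.e., $\norm{\cdot}$ replaced by $\norm{\cdot}^2$ in the transport cost). Your sketch of the coupling decomposition, Lagrangian dualization, and strong-duality argument is in fact more detailed than what the paper provides, and your explicit plan to cite~\cite{PME-DK:18} for the rigorous strong-duality step matches the paper's treatment exactly.
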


This result and its proof are similar to~\cite[Theorem
  4.2]{PME-DK:18} and its corresponding proof, respectively. While our
  metric is $2$-Wasserstein, the referred result's is
  $1$-Wasserstein. Theorem~\ref{th:tractable-dro} shows that under
mild conditions on the objective function, one can reformulate the
distributionally robust optimization problem as a convex optimization
problem. This result plays a key role in our forthcoming
discussion. We note that the reformulation given in
  Theorem~\ref{th:tractable-dro} is valid under weaker set of
  conditions on $f$, as reported in~\cite{RG-AJK:16-arXiv}
  and~\cite{JB-KM:17-arXiv}. We however avoid this generality as it
  complicates the design and analysis of the distributed algorithm.

\section{Problem statement}\label{sec:problem}

Consider $n \in \integerspositive$ agents communicating over an
undirected weighted graph $\GG = (\vertices,\edges,\Adj)$. The set of
vertices are enumerated as $\VV:=\until{n}$.  Each agent $i \in
\until{n}$ can send and receive information from its neighbors $\NN_i$
in $\GG$.  Let $\map{f}{\real^d \times \real^m}{\real}$, $(x,\xi)
\mapsto f(x,\xi)$, be a continuously differentiable objective
function.  Assume that for any $\xi \in \real^m$, the map $x \mapsto
f(x,\xi)$ is convex and that for any $x \in \real^d$, the map $\xi
\mapsto f(x,\xi)$ is either convex or concave. Suppose that the set of
$\xi \in \real^m$ for which $\ones_n$ and $-\ones_n$ are not a
direction of recession for the convex function $x \mapsto f(x,\xi)$ is
dense in $\real^m$.
As we progress, we stipulate additional conditions on $f$ as
necessary. Assume that all agents know the objective
function~$f$. Given a random variable $\xi \in \real^m$ with support
$\real^m$ and distribution $\Pb$, the original objective for the
agents is to solve the following stochastic optimization problem
\begin{equation}\label{eq:stochastic-net-opt-0}
  \inf_{x \in \real^d} \Eb_{\Pb} \bigl[ f(x,\xi) \bigr].
\end{equation}
For simplicity, we optimize over $\real^d$ instead of some closed convex set $\XX$. However, our proposed method can handle such generalization by assuming that each agent knows a subset of $\real^d$ such that the intersection of them all is $\XX$.
We assume that $\Pb$ is unknown to agents and 
instead, each agent has a certain number (at least one) of independent
and identically distributed realizations of the random variable $\xi$.
We denote the data available to agent $i$ by $\Xihat_i$ that is assumed
to be nonempty. Assume that
$\Xihat_i \cap \Xihat_j = \emptyset$ for all $i,j \in \until{n}$ and let
$\Xihat= \cup_{i=1} \Xihat_i$ containing $N$ samples be the available
data set.  

The goal for the agents is then to collectively find, in a distributed
manner, a data-driven solution $\xhat_N \in \real^d$ to approximate the
optimizer of~\eqref{eq:stochastic-net-opt-0} with guaranteed performance
bounds.  To achieve this, we rely on the framework of distributionally
robust optimization, cf.  Section~\ref{sec:data-driven}.  From
Theorem~\ref{th:tractable-dro}, a data-driven solution
for~\eqref{eq:stochastic-net-opt-0} can be obtained by solving the
following convex optimization problem
\begin{align}\label{eq:stochastic-net-c-opt}
  \inf_{\lm \ge 0, x} \Bigl\{ \! \lm \rads \!+\! \frac{1}{N}
  \!  \sum_{k=1}^N \max_{\xi \in \real^m} \Bigl( f(x,\xi) \!-\! \lm
  \norm{\xi - \data{k}}^2 \Bigr) \! \Bigr\}.
\end{align}
The following is assumed to hold throughout the paper. 
\begin{assumption}\longthmtitle{Nontrivial feasibility and existence of finite optimizers of~\eqref{eq:stochastic-net-c-opt}}\label{as:standing}
	\rm{ 
	We assume that the set belonging to
	$\realnonnegative \times \real^d$ where the objective function
	in~\eqref{eq:stochastic-net-c-opt} takes finite values has a
	nonempty interior. Further, we assume that there exists a finite optimizer $(x^*,\lm^*)$
	of~\eqref{eq:stochastic-net-c-opt}.
	\oprocend
}
\end{assumption}
The existence of finite optimizers is ensured if one of the set of conditions for such existence given in~\cite{AEO-PT:06} are met.
Note that each agent can individually find a data-driven solution
to~\eqref{eq:stochastic-net-opt-0} by using only the data available to
it in the convex formulation~\eqref{eq:stochastic-net-c-opt}. However,
such a solution in general will have an inferior out-of-sample
guarantee as compared to the one obtained collectively. In the
cooperative setting, agents aim to
solve~\eqref{eq:stochastic-net-c-opt} in a distributed manner, that is
\begin{enumerate}
\item each agent $i$ has the information
  \begin{equation}\label{eq:info}
	\II_i := \{\Xihat_i, f, \eps, n, N\}, 
  \end{equation}
  where $\eps$ is the radius of the ambiguity set that agents agree upon beforehand,
\item each agent $i$ can only communicate with its neighbors $\NN_i$
  in the graph $\GG$, 
\item each agent $i$ does not share with its neighbors any element of
  the dataset $\Xihat_i$ available to it, and 
\item there is no central coordinator or leader that can communicate
  with all agents. 
\end{enumerate}
The challenge in solving~\eqref{eq:stochastic-net-c-opt} in a
distributed manner lies in the fact that the data is distributed over
the network and the optimizer~$x^*$ depends on it all. Moreover, the
inner maximization can be a nonconvex problem, in general.  One way of
solving~\eqref{eq:stochastic-net-c-opt} in a cooperative fashion is to
let agents share their data with everyone in the network via some sort
of flooding mechanism. This violates item (iii) of our definition of
distributed algorithm given above. We specifically keep such methods
out of scope due to two reasons.  First, the data would not be private
anymore, creating a possibility of adversarial action. Second, the
communication burden of such a strategy is higher than our proposed
distributed strategy when the size of the network and the dataset
grows along the execution of the algorithm.

Our strategy to tackle the problem is organized as follows: in
Section~\ref{sec:reformulation} we reformulate the
problem~\eqref{eq:stochastic-net-c-opt} to obtain a structure which
allows us in Section~\ref{sec:dist} to propose our distributed
algorithm.  Section~\ref{sec:dist1} discusses a class of objective
functions~$f$ for which the distributed algorithm provably converges.

\section{Distributed problem formulation and saddle
  points}\label{sec:reformulation}

This section studies the structure of the optimization problem
presented in Section~\ref{sec:problem} with the ulterior goal of
facilitating the design of a distributed algorithmic solution. Our
first step is a reformulation of~\eqref{eq:stochastic-net-c-opt} that,
by augmenting the decision variables of the agents, yields an
optimization where the objective function is the aggregate of
individual functions (that can be independently evaluated by the
agents) and constraints which display a distributed structure.  Our
second step is the identification of a convex-concave function whose
saddle points are the primal-dual optimizers of the reformulated
problem under suitable conditions on the objective function~$f$.  This
opens the way to consider the associated saddle-point dynamics as our
candidate distributed algorithm. The structure of the original
optimization problem makes this step particularly nontrivial.

  \begin{remark}\longthmtitle{Alternative distributed algorithmic solutions}\label{re:alternate}
    \rm{The optimization problem~\eqref{eq:stochastic-net-c-opt} can
      possibly be solved using other distributed methods. This might
      entail making use of alternative reformulations
      of~\eqref{eq:stochastic-net-c-opt}. For instance,
      problem~\eqref{eq:stochastic-net-c-opt} can be written as a
      semi-infinite program, cf.~\cite{FL-SM:17-arXiv}, and then a
      distributed cutting-surface method can be designed following the
      centralized algorithm given in~\cite{FL-SM:17-arXiv}. When $f$
      is piecewise affine in $\xi$,~\eqref{eq:stochastic-net-c-opt}
      takes the form of a conic program (without the $\max$ operator
      in the objective), which can potentially be solved via
      primal-dual distributed solvers. Finally,
      following~\cite{JB-YK-KM:17-arXiv,RG-XC-AJK:17-arXiv}, for
      certain $f$ (linear form or objective of LASSO or logistic
      regression), the problem~\eqref{eq:stochastic-net-c-opt} is
      equivalent to minimizing the empirical cost (expectation of cost
      function under empirical distribution) plus a regularizer
      term. For such cases, primal-dual distributed solvers may be a
      valid solution strategy. The advantage of the methodology
      proposed here is its generality, which does not require to write
      different algorithms for different cases depending on the form
      of~$f$.  } \oprocend
  \end{remark}

\subsection{Reformulation as distributed optimization problem}

We have each agent $i \in \until{n}$ maintain a copy of $\lm$ and $x$,
denoted by $\lmval{i} \in \real$ and $\xval{i} \in \real^d$,
respectively.  Thus, the decision variables for $i$ are
$(\xval{i},\lmval{i})$. For notational ease, let the concatenated
vectors be $\lmvec := (\lmval{1}; \dots; \lmval{n})$, and $\xvec :=
(\xval{1}; \dots; \xval{n})$.  Let $v_k \in \until{n}$ be the agent
that holds the $k$-th sample $\data{k}$ of the dataset.  Consider the
following convex optimization problem
\begin{subequations}\label{eq:stochastic-net-dist-opt}
  \begin{align}
    \underset{\xvec,\lmvec \ge \zeros_n}{\min} & \quad h(\lmvec)  + \frac{1}{N}
    \sum_{k=1}^N \max_{\xi \in \real^m} g_k(\xval{v_k}, \lmval{v_k},\xi)
    \label{eq:s-n-d-o-1}
    \\
    \st 
    & \quad \Lap \lmvec = \zeros_n, \label{eq:s-n-d-o-2}
    \\
    & \quad (\Lap \otimes \identity{d}) \xvec = \zeros_{nd}, 
    \label{eq:s-n-d-o-3}
  \end{align}
\end{subequations}
where $\Lap \in \real^{n \times n}$ is the Laplacian of the graph $\GG$
and we have used the shorthand notation $\map{h}{\real^n}{\real}$ for
\begin{align*}
  h(\lmvec):= \frac{\rads (\ones_n^\top \lmvec)}{n}
\end{align*}
and, for each $k \in \until{N}$, 
$\map{g_k}{\real^d \times \real \times \real^m}{\real}$ for
\begin{align*}
  g_k(x,\lm,\xi) := f(x,\xi) - \lm \norm{\xi - \data{k}}^2.
\end{align*}

The following result establishes the correspondence between the
optimizers of~\eqref{eq:stochastic-net-c-opt}
and~\eqref{eq:stochastic-net-dist-opt}, respectively.

\begin{lemma}\longthmtitle{One-to-one correspondence between optimizers
    of~\eqref{eq:stochastic-net-c-opt}
    and~\eqref{eq:stochastic-net-dist-opt}}\label{le:corr}
  The following holds:
  \begin{enumerate}
    \item If $(x^*,\lm^*)$ is an optimizer
      of~\eqref{eq:stochastic-net-c-opt}, then $(\ones_n \otimes x^*,
      \lm^* \ones_n)$ is an optimizer
      of~\eqref{eq:stochastic-net-dist-opt}.
    \item If $(\xvec^*,\lmvec^*)$ is an optimizer
      of~\eqref{eq:stochastic-net-dist-opt}, then there exists an
      optimizer $(x^*,\lm^*)$ of~\eqref{eq:stochastic-net-c-opt} such
      that $\xvec^* = \ones_n \otimes x^*$ and $\lmvec^* = \lm^*
      \ones_n$.
  \end{enumerate}
\end{lemma}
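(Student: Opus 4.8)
The plan is to exploit the structure of \eqref{eq:stochastic-net-dist-opt}: the constraints \eqref{eq:s-n-d-o-2} and \eqref{eq:s-n-d-o-3} force consensus because $\GG$ is connected, so feasible points are exactly those of the form $\xvec = \ones_n \otimes x$, $\lmvec = \lm \ones_n$ with $\lm \ge 0$; and on such points the objective of \eqref{eq:stochastic-net-dist-opt} collapses to the objective of \eqref{eq:stochastic-net-c-opt}. Concretely, for statement (i), if $(x^*,\lm^*)$ optimizes \eqref{eq:stochastic-net-c-opt}, then $(\ones_n\otimes x^*,\lm^*\ones_n)$ has $\Lap(\lm^*\ones_n)=\lm^*\Lap\ones_n=\zeros_n$ and $(\Lap\otimes\identity{d})(\ones_n\otimes x^*)=(\Lap\ones_n)\otimes x^*=\zeros_{nd}$, so it is feasible, and since $v_k\in\until{n}$ holds sample $\data{k}$ we get $g_k(x^*,\lm^*,\xi)=f(x^*,\xi)-\lm^*\norm{\xi-\data{k}}^2$, hence $h(\lm^*\ones_n)+\frac1N\sum_k\max_\xi g_k(\cdot)=\lm^*\rads+\frac1N\sum_k\max_\xi(f(x^*,\xi)-\lm^*\norm{\xi-\data{k}}^2)$, which is precisely the optimal value of \eqref{eq:stochastic-net-c-opt}.

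The reverse inclusion of optimal values, and statement (ii), follow by the same identity read in the other direction: I would first argue that any feasible $(\xvec,\lmvec)$ of \eqref{eq:stochastic-net-dist-opt} must be of the consensus form. Since $\GG$ is connected, $\Lap\lmvec=\zeros_n$ and $\lmvec\ge\zeros_n$ give $\lmvec=\lm\ones_n$ for some scalar $\lm\ge 0$ (the kernel of $\Lap$ is $\spn\{\ones_n\}$), and $(\Lap\otimes\identity{d})\xvec=\zeros_{nd}$ gives $\xvec=\ones_n\otimes x$ for some $x\in\real^d$, because $\ker(\Lap\otimes\identity{d})=\ker\Lap\otimes\real^d=\spn\{\ones_n\}\otimes\real^d$. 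On this set the objective equals the objective of \eqref{eq:stochastic-net-c-opt} evaluated at $(x,\lm)$, by the same rewriting of $g_k$ using $v_k\in\until{n}$. Thus the two problems have the same feasible values and hence the same optimal value, and $(\xvec^*,\lmvec^*)$ optimizes \eqref{eq:stochastic-net-dist-opt} if and only if the associated $(x^*,\lm^*)$ optimizes \eqref{eq:stochastic-net-c-opt}; Assumption~\ref{as:standing} guarantees the optimizer set is nonempty so both statements are non-vacuous. Finally I would note the $\infimum$ in \eqref{eq:stochastic-net-c-opt} is attained by Assumption~\ref{as:standing}, so ``optimizer'' is meaningful throughout.

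I do not expect a serious obstacle here — the proof is essentially the observation that the Laplacian consensus constraints plus connectivity are equivalent to the ansatz $\xvec=\ones_n\otimes x$, $\lmvec=\lm\ones_n$, and that the objective is consensus-invariant. The one point to state carefully is the kernel computation for $\Lap\otimes\identity{d}$ (that $\ker(\Lap\otimes\identity{d})=\ker\Lap\otimes\real^d$ when $\Lap$ is symmetric with simple zero eigenvalue), and the fact that $\lm\ge0$ componentwise on a consensus vector is equivalent to the scalar $\lm\ge0$; both are routine. A secondary bookkeeping point is that the value of $g_k$ does not depend on which agent $v_k$ holds $\data{k}$ once we are on the consensus set, which is immediate from the definition of $g_k$.
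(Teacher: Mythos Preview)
Your proposal is correct and follows essentially the same approach as the paper: the paper's proof is a one-sentence sketch noting that connectivity of $\GG$ implies $\Lap\lmvec=\zeros_n$ iff $\lmvec=\alpha\ones_n$ and $(\Lap\otimes\identity{d})\xvec=\zeros_{nd}$ iff $\xvec=\ones_n\otimes x$, and you have simply spelled out the details (kernel of $\Lap\otimes\identity{d}$, the objective collapsing on the consensus set, the nonnegativity constraint) that the paper leaves implicit.
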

\begin{proof}
  The proof follows by noting that $\GG$ is connected and hence, (i)
  $\Lap \lmvec = \zeros_n$ if and only if $\lmvec = \alpha \ones_n$,
  $\alpha \in \real$; and (ii) $(\Lap \otimes \identity{d}) \xvec =
  \zeros_{nd}$ if and only if $\xvec = \ones_n \otimes x$, $x \in
  \real^d$. 
\end{proof}

Note that constraints~\eqref{eq:s-n-d-o-2} and~\eqref{eq:s-n-d-o-3}
force agreement and that each of their components is computable by an
agent of the network using only local information. Moreover, the
objective function~\eqref{eq:s-n-d-o-1} can be written as
$\sum_{i=1}^n J_i (\xval{i},\lmval{i},\Xihat_i)$, where
\begin{align*}
  J_i(\xval{i},\lmval{i},\Xihat_i)
   := \frac{\rads \lmval{i}}{n}
    + \frac{1}{N} \sum_{k: \data{k} \in \Xihat_i} \max_{\xi \in \real^m}
    g_k(\xval{i},\lmval{i},\xi) ,
\end{align*}
for all $i \in \until{n}$.  Therefore, the
problem~\eqref{eq:stochastic-net-dist-opt} has the adequate structure
from a distributed optimization viewpoint: an aggregate objective
function and locally computable constraints.

\subsection{Augmented Lagrangian and saddle points}

Our next step is to identify an appropriate variant of the Lagrangian
function of~\eqref{eq:stochastic-net-dist-opt} with the following two
properties: (i) it does not consist of an inner maximization, unlike
the objective in~\eqref{eq:s-n-d-o-1}, and (ii) the primal-dual
optimizers of~\eqref{eq:stochastic-net-dist-opt} are saddle points of
the newly introduced function. The availability of these two facts
sets the stage for our ensuing algorithm design.

To proceed further, we first denote for convenience the objective
function~\eqref{eq:s-n-d-o-1} with $\map{F}{\real^{nd} \times
  \realnonnegative^n}{\real}$,
\begin{align}
  F(\xvec,\lmvec):= h(\lmvec) + \frac{1}{N} \sum_{k=1}^N \max_{\xi \in
  \real^m} g_k(\xval{v_k},\lmval{v_k},\xi). \label{eq:F}
\end{align}
Note that the Lagrangian of~\eqref{eq:stochastic-net-dist-opt} is
$\map{L}{\real^{nd} \times \realnonnegative^n \times \real^n \times
  \real^{nd}}{\realex}$,
\begin{align}
  L(\xvec,\lmvec,\nu,\eta) \! :=  \! F(\xvec,\lmvec)
  + \nu^\top \Lap \lmvec + \eta^\top (\Lap \otimes
    \identity{d}) \xvec,
  \label{eq:lagrangian}
\end{align}
where $\nu \in \real^n$ and $\eta \in \real^{nd}$ are dual variables
corresponding to the equality constraints~\eqref{eq:s-n-d-o-2}
and~\eqref{eq:s-n-d-o-3}, respectively. $L$ is convex-concave in
$((\xvec,\lmvec),(\nu,\eta))$ on the domain $\lmvec \ge \zeros_n$.
The next result states that the duality gap
for~\eqref{eq:stochastic-net-dist-opt} is zero. The result is a consequence of~\cite[Corollary 28.22]{RTR:97} and~\cite[Theorem 28.3]{RTR:97} using the hypotheses of Assumption~\ref{as:standing}. 

\begin{lemma}\longthmtitle{Min-max equality for $L$}\label{le:zero-duality}
  The set of saddle points of $L$ over the domain $(\real^{nd} \times
  \realnonnegative^n) \times (\real^n \times \real^{nd})$ is nonempty
  and 
  \begin{align}
    \inf_{\xvec,\lmvec \ge \zeros_n} \sup_{\nu,\eta} L
    (\xvec,\lmvec,\nu,\eta)= \sup_{\nu, \eta} \inf_{\xvec,\lmvec \ge
    \zeros_n} L(\xvec,\lmvec,\nu,\eta). \label{eq:min-max-lag}
  \end{align} 
  Furthermore, the following holds:
  \begin{enumerate}
    \item If $(\xvb, \lmvb, \bar \nu , \bar \eta)$ is a saddle point of
    $L$ over $(\real^{nd} \times \realnonnegative^n) \times (\real^n
    \times \real^{nd})$, then $(\xvb, \lmvb)$ is an optimizer
    of~\eqref{eq:stochastic-net-dist-opt}.
    \item If $(\xvb,\lmvb)$ is an optimizer
    of~\eqref{eq:stochastic-net-dist-opt}, then there exists $(\bar \nu,
    \bar \eta)$ such that $(\xvb, \lmvb, \bar \nu , \bar \eta)$ is a
    saddle point of $L$ over $(\real^{nd} \times \realnonnegative^n)
    \times (\real^n \times \real^{nd})$. 
  \end{enumerate}
\end{lemma}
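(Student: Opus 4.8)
The plan is to recognize~\eqref{eq:stochastic-net-dist-opt} as an ordinary convex program with affine constraints and invoke the classical Lagrangian duality theory for such programs, namely~\cite[Corollary 28.2.2]{RTR:97} (existence of Kuhn--Tucker vectors and zero duality gap under the constraint qualification appropriate to affine constraints) and~\cite[Theorem 28.3]{RTR:97} (the saddle points of the Lagrangian are exactly the primal--dual optimal pairs). First I would record the structural facts. The map $h$ is linear; for each $k \in \until{N}$, $(x,\lm) \mapsto g_k(x,\lm,\xi) = f(x,\xi) - \lm\norm{\xi - \data{k}}^2$ is jointly convex (convex in $x$ by hypothesis, affine in $\lm$), so $(x,\lm) \mapsto \max_{\xi \in \real^m} g_k(x,\lm,\xi)$ is a pointwise supremum of closed convex functions, hence itself closed convex; therefore $F$ in~\eqref{eq:F} is a closed proper convex function on $\real^{nd} \times \realnonnegative^n$ --- it never takes the value $-\infty$ because $\max_\xi g_k(x,\lm,\xi) \ge f(x,\data{k})$, and it is finite at the point exhibited below. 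The constraints $\Lap\lmvec = \zeros_n$, $(\Lap\otimes\identity{d})\xvec = \zeros_{nd}$ and $\lmvec \ge \zeros_n$ are all affine, and $L$ is convex--concave on $\{\lmvec \ge \zeros_n\}$, as already noted.

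Next I would check the hypotheses of~\cite[Corollary 28.2.2]{RTR:97}. By Lemma~\ref{le:corr}(i) together with Assumption~\ref{as:standing}, the lifted point $(\ones_n \otimes x^*, \lm^*\ones_n)$ is an optimizer of~\eqref{eq:stochastic-net-dist-opt}, so the program's optimal value is finite and attained. For the constraint qualification, note that since every agent holds at least one sample, $\operatorname{dom}F = \prod_{i=1}^n D_i$, where $D_i \subseteq \real^d \times \realnonnegative$ is the set on which agent $i$'s local objective $J_i$ is finite; because $D_i$ is defined by a subfamily of the conditions that cut out the finite-value set of~\eqref{eq:stochastic-net-c-opt}, it contains that set, which by Assumption~\ref{as:standing} has nonempty interior. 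Choosing $(\xt,\tilde{\lm})$ in the interior of that set --- hence in $\intr(D_i)$ for every $i$ --- the consensus lift $(\ones_n \otimes \xt, \tilde{\lm}\ones_n)$ lies in $\intr(\operatorname{dom}F)$ and satisfies all the (affine) constraints. This is precisely the relative-interior feasibility that~\cite[Corollary 28.2.2]{RTR:97} requires in the all-affine case; it follows that the set of Kuhn--Tucker vectors --- i.e., the optimal multipliers $(\nu,\eta)$ for the two equality constraints, the inequality $\lmvec \ge \zeros_n$ being absorbed into the set over which the primal minimizes --- is nonempty and that strong duality holds, which is exactly~\eqref{eq:min-max-lag}.

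Finally, I would invoke~\cite[Theorem 28.3]{RTR:97}: a pair $((\xvb,\lmvb),(\bar\nu,\bar\eta))$ with $\lmvb \ge \zeros_n$ is a saddle point of $L$ over $(\real^{nd}\times\realnonnegative^n)\times(\real^n\times\real^{nd})$ if and only if $(\xvb,\lmvb)$ is an optimizer of~\eqref{eq:stochastic-net-dist-opt} and $(\bar\nu,\bar\eta)$ is a Kuhn--Tucker vector. Since both of these sets are nonempty by the previous step, the saddle-point set is nonempty, and this equivalence yields statements~(i) and~(ii) immediately. The step I expect to be the main obstacle is exactly the constraint-qualification bookkeeping in the second paragraph: transporting the nonempty-interior hypothesis of Assumption~\ref{as:standing}, which is phrased for the centralized problem~\eqref{eq:stochastic-net-c-opt}, to the lifted problem~\eqref{eq:stochastic-net-dist-opt} by identifying $\operatorname{dom}F$ as a product of local domains, checking that each local domain contains the centralized one, and confirming that the affineness of \emph{all} constraints (the two equalities and $\lmvec \ge \zeros_n$) makes this relative-interior condition the only constraint qualification needed --- no Slater-type strict feasibility is required beyond what Assumption~\ref{as:standing} provides.
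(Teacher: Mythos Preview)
Your proposal is correct and follows essentially the same route as the paper: the paper's entire proof is the one-line remark that the lemma ``is a consequence of~\cite[Corollary 28.2.2]{RTR:97} and~\cite[Theorem 28.3]{RTR:97} using the hypotheses of Assumption~\ref{as:standing},'' and you invoke exactly these two results while spelling out the constraint-qualification verification that the paper leaves implicit. Your bookkeeping in transporting the nonempty-interior hypothesis from~\eqref{eq:stochastic-net-c-opt} to the lifted problem via the product structure $\operatorname{dom}F=\prod_i D_i$ is sound.
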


Owing to the above result, one could potentially write a saddle-point
dynamics for the Lagrangian~$L$ as a distributed algorithm to find the
optimizers. However, without strict or strong convexity assumptions on
the objective function, the resulting dynamics is in general not
guaranteed to converge, see e.g.,~\cite{AC-BG-JC:17-sicon}.  To
overcome this hurdle, we augment the Lagrangian with quadratic terms
in the primal variables. Let the augmented Lagrangian
$\map{\Laug}{\real^{nd} \times \realnonnegative^n \times \real^n
  \times \real^{nd}}{\realex}$~be
\begin{align*}
  \Laug(\xvec, \lmvec,\nu,\eta) &:= L(\xvec,\lmvec,\nu,\eta) 
  \\
  & \quad + \frac{1}{2}
  \xvec^\top (\Lap \otimes \identity{d}) \xvec + \frac{1}{2} \lmvec^\top
  \Lap \lmvec.
\end{align*}
Note that $\Laug$ is also convex-concave in
$((\xvec,\lmvec),(\nu,\eta))$ on the domain $\lmvec \ge \zeros_n$.
The next result guarantees that this augmentation step does not change
the saddle points.

\begin{lemma}\longthmtitle{Saddle points of $L$ and $\Laug$ are
    the same}\label{le:equivalence-aug}
  A point $(\xvec^*,\lmvec^*,\nu^*,\eta^*)$ is a saddle point of $L$
  over
  $(\real^{nd} \times \realnonnegative^n) \times (\real^{n} \times
  \real^{nd})$
  if and only if it is a saddle point of $\Laug$ over the same domain.
\end{lemma}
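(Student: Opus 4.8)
\emph{Proof plan.}
The plan is to build on three elementary facts about the quadratic perturbation
$Q(\xvec,\lmvec):=\frac{1}{2}\xvec^\top(\Lap\otimes\identity{d})\xvec+\frac{1}{2}\lmvec^\top\Lap\lmvec$:
it does not depend on the dual variables $(\nu,\eta)$; since $\GG$ is connected we have $\Lap\succeq 0$, so $Q\ge 0$ with equality exactly on the consensus set $\CC:=\setdef{(\xvec,\lmvec)}{\Lap\lmvec=\zeros_n,\ (\Lap\otimes\identity{d})\xvec=\zeros_{nd}}$; and its gradient $\bigl((\Lap\otimes\identity{d})\xvec;\,\Lap\lmvec\bigr)$ also vanishes on $\CC$. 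First I would record a common preliminary valid for both functions: if $(\xvec^*,\lmvec^*,\nu^*,\eta^*)$ is a saddle point of $L$ or of $\Laug$, then $(\xvec^*,\lmvec^*)\in\CC$. Indeed, the map $(\nu,\eta)\mapsto L(\xvec^*,\lmvec^*,\nu,\eta)=F(\xvec^*,\lmvec^*)+\nu^\top\Lap\lmvec^*+\eta^\top(\Lap\otimes\identity{d})\xvec^*$ is affine (here $F(\xvec^*,\lmvec^*)$ is finite since a saddle point lies in the effective domain), and so is $(\nu,\eta)\mapsto\Laug(\xvec^*,\lmvec^*,\nu,\eta)$, which differs from it only by the constant $Q(\xvec^*,\lmvec^*)$; being maximized over all of $\real^n\times\real^{nd}$ at $(\nu^*,\eta^*)$, such an affine function must be constant, forcing $\Lap\lmvec^*=\zeros_n$ and $(\Lap\otimes\identity{d})\xvec^*=\zeros_{nd}$. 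Consequently $Q(\xvec^*,\lmvec^*)=0$ and $L(\xvec^*,\lmvec^*,\nu,\eta)=\Laug(\xvec^*,\lmvec^*,\nu,\eta)$ for all $(\nu,\eta)$.

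With this in hand the forward implication is immediate. If $(\xvec^*,\lmvec^*,\nu^*,\eta^*)$ is a saddle point of $L$, then for all $(\nu,\eta)$ the left inequality $\Laug(\xvec^*,\lmvec^*,\nu,\eta)=L(\xvec^*,\lmvec^*,\nu,\eta)\le L(\xvec^*,\lmvec^*,\nu^*,\eta^*)=\Laug(\xvec^*,\lmvec^*,\nu^*,\eta^*)$ follows from the coincidence above; and for all $(\xvec,\lmvec)$ with $\lmvec\ge\zeros_n$, the right inequality follows from $\Laug(\xvec,\lmvec,\nu^*,\eta^*)=L(\xvec,\lmvec,\nu^*,\eta^*)+Q(\xvec,\lmvec)\ge L(\xvec^*,\lmvec^*,\nu^*,\eta^*)+0=\Laug(\xvec^*,\lmvec^*,\nu^*,\eta^*)$, using the $L$-saddle inequality and $Q\ge 0$.

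The converse is where the argument has to do real work, precisely because $Q$ is nonnegative rather than nonpositive and so cannot simply be discarded in the direction just used. Suppose $(\xvec^*,\lmvec^*,\nu^*,\eta^*)$ is a saddle point of $\Laug$. The left inequality for $L$ again transfers by the coincidence at the primal part, so the crux is to show that $(\xvec^*,\lmvec^*)$ minimizes the convex map $(\xvec,\lmvec)\mapsto L(\xvec,\lmvec,\nu^*,\eta^*)$ over $\real^{nd}\times\realnonnegative^n$ (convexity on this set was noted after~\eqref{eq:lagrangian}), knowing only that it minimizes $\Laug(\cdot,\cdot,\nu^*,\eta^*)$ there. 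I would argue by contradiction: if $(\xvec',\lmvec')$ with $\lmvec'\ge\zeros_n$ had $L(\xvec',\lmvec',\nu^*,\eta^*)=L(\xvec^*,\lmvec^*,\nu^*,\eta^*)-\delta$ for some $\delta>0$, consider the segment $(\xvec_t,\lmvec_t):=(1-t)(\xvec^*,\lmvec^*)+t(\xvec',\lmvec')$, $t\in(0,1]$, which keeps $\lmvec_t\ge\zeros_n$. Convexity gives $L(\xvec_t,\lmvec_t,\nu^*,\eta^*)\le L(\xvec^*,\lmvec^*,\nu^*,\eta^*)-t\delta$, while $(\xvec^*,\lmvec^*)\in\CC$ annihilates the cross terms in the quadratic form, so $Q(\xvec_t,\lmvec_t)=t^2 Q(\xvec',\lmvec')$. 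Hence $\Laug(\xvec_t,\lmvec_t,\nu^*,\eta^*)\le\Laug(\xvec^*,\lmvec^*,\nu^*,\eta^*)-t\delta+t^2 Q(\xvec',\lmvec')$, which is strictly less than $\Laug(\xvec^*,\lmvec^*,\nu^*,\eta^*)$ once $t>0$ is small enough, contradicting the minimality of $(\xvec^*,\lmvec^*)$. (Equivalently, one may read off the conclusion from $\gradient Q(\xvec^*,\lmvec^*)=\zeros$: the first-order stationarity conditions for minimizing $\Laug(\cdot,\cdot,\nu^*,\eta^*)$ and $L(\cdot,\cdot,\nu^*,\eta^*)$ over $\real^{nd}\times\realnonnegative^n$ then coincide, and convexity promotes stationarity to global optimality.) The main obstacle is isolating this second-order perturbation estimate; everything else is bookkeeping with the saddle-point inequalities.
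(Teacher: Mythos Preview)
Your proof is correct. The paper does not actually carry out any argument here: its entire proof is the sentence ``follows by using the convexity property of the objective function in~\cite[Theorem~1.1]{XLS-DL-KIMM:05},'' i.e., it defers to a general result on augmented Lagrangians. Your approach, by contrast, is fully self-contained. The forward implication and the preliminary ``any saddle point has consensus primal part'' observation are standard, and your handling of the converse direction---the only nontrivial part---is clean: exploiting that $(\xvec^*,\lmvec^*)$ lies in the kernel of $\Lap\otimes\identity{d+1}$ so that $Q$ scales as $t^2$ along the segment while the gain from convexity is linear in $t$. This is exactly the mechanism behind the cited theorem specialized to the present quadratic-penalty, convex-objective setting, so conceptually the two routes coincide; yours simply makes the argument explicit rather than outsourcing it. One minor remark: your notation $\CC$ for the consensus set clashes with the paper's later use of $\CC$ in Proposition~\ref{pr:saddle-primal-dual-equiv} for a different object, so you may want to rename it in a final write-up.
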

The proof follows by using the convexity property of the objective function 
in~\cite[Theorem 1.1]{XLS-DL-KIMM:05}.
The above result implies that finding the saddle points of~$\Laug$
would take us to the primal-dual optimizers
of~\eqref{eq:stochastic-net-dist-opt}.  However, a final roadblock
remaining is writing a gradient-based dynamics for~$\Laug$, given that
this function involves a set of maximizations in its definition and so
the gradient of $\Laug$ with respect to $\xvec$ is undefined for
$\lmvec = 0$. Thus, our next task is to get rid of these internal
optimization routines and identify a function for which the
saddle-point dynamics is well defined over the feasible domain.  Note
that
\begin{align}
  \Laug(\xvec,\lmvec,\nu,\eta) = \max_{\{\xival{k}\}}
  \Laugt(\xvec,\lmvec,\nu,\eta,\{\xival{k}\}), \label{eq:max-def-laug}
\end{align}
where
\begin{align}
  & \Laugt(\xvec,\lmvec,\nu,\eta,\{\xival{k}\}) := h(\lmvec) +
  \frac{1}{N} \sum_{k=1}^N g_k(\xval{v_k},\lmval{v_k}, \xival{k})
  \notag
  \\
  &  + \nu^\top \Lap \lmvec \! + \! \eta^\top (\Lap \otimes \identity{d})
  \xvec  \! + \!  \frac{1}{2} \xvec^\top (\Lap \otimes \identity{d}) \xvec \! + \! 
  \frac{1}{2} \lmvec^\top \Lap \lmvec.  \label{eq:lagrangian-2}
\end{align}
The following result shows that, under appropriate conditions,
$\Laugt$ is the function we are looking for.

\begin{proposition}\longthmtitle{Saddle points of $\Laugt$ and
    correspondence with optimizers
    of~\eqref{eq:stochastic-net-dist-opt}}
  \label{pr:saddle-primal-dual-equiv}
  Let $\CC \subset \real^{nd} \times \realnonnegative^n$ with
  $\intr(\CC) \not = \emptyset$ be a closed, convex set such that
  \begin{enumerate}
  \item the saddle points of $\Laug$ over the domain $(\real^{nd} \times
  \realnonnegative^n) \times (\real^n \times \real^{nd})$ are contained
  in the set $\CC \times (\real^n
    \times \real^{nd})$;
  \item $\Laugt$ is convex-concave on $\CC \times
    (\real^n \times \real^{nd} \times \real^{mN})$;
  \item for any $(\nu,\eta)$, 
    \begin{align}
      & \min_{ (\xvec,\lmvec) \in \CC}  \max_{ \{\xival{k}\}} 
      \Laugt(\xvec,\lmvec,\nu,\eta,\{\xival{k}\}) \notag
      \\
      & \quad = \max_{\{\xival{k}\}}
      \min_{ (\xvec,\lmvec) \in \CC}
      \Laugt(\xvec,\lmvec,\nu,\eta,\{\xival{k}\}).
      \label{eq:min-max-exchange}
    \end{align}
  \end{enumerate}
  Then, the following holds
  \begin{enumerate}
  \item The set of saddle points of $\Laugt$ over the domain $\CC
    \times (\real^n \times \real^{nd} \times \real^{mN})$ is
    nonempty, convex, and closed.
  \item If $(\xvb,\lmvb,\bar \nu, \bar \eta,\{( \bar \xi)^{k}\})$ is
    a saddle point of $\Laugt$ over $\CC \times (\real^n \times
    \real^{nd} \times \real^{mN})$, then $(\xvb, \lmvb)$ is an
    optimizer of~\eqref{eq:stochastic-net-dist-opt}.
  \item If $(\xvb,\lmvb) \in \CC$ is an optimizer
    of~\eqref{eq:stochastic-net-dist-opt}, then there exists $(\bar
    \nu, \bar \eta, \{(\bar \xi)^{k}\})$ such that $(\xvb,\lmvb,\bar
    \nu, \bar \eta,\{( \bar \xi)^{k}\})$ is a saddle point of $\Laugt$
    over $\CC \times (\real^n \times \real^{nd} \times \real^{mN})$.
  \end{enumerate}
\end{proposition}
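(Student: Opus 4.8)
The plan is to combine three ingredients: the max‑representation \eqref{eq:max-def-laug} of $\Laug$, the min‑max interchange hypothesis (iii), and the already‑established correspondence between saddle points of $\Laug$ and optimizers of \eqref{eq:stochastic-net-dist-opt} (Lemmas~\ref{le:zero-duality} and~\ref{le:equivalence-aug}). First I would prove (i). Since $\Laugt$ is convex‑concave on $\CC \times (\real^n \times \real^{nd} \times \real^{mN})$ by hypothesis (ii), and since by hypothesis (iii) the inner $(\xvec,\lmvec)$-minimization and $\{\xival{k}\}$-maximization can be interchanged, the partial min‑max structure behaves well. The strategy is to apply Theorem~\ref{th:saddle-value} (existence of a finite saddle value and saddle point) to $\Laugt$, after checking the two no‑common‑direction‑of‑recession conditions. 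Here one uses that $\CC$ is compact in the relevant directions after restricting to a sublevel set, or more directly that Assumption~\ref{as:standing} guarantees a finite optimizer of \eqref{eq:stochastic-net-c-opt}, hence of \eqref{eq:stochastic-net-dist-opt} (Lemma~\ref{le:corr}), and via Lemma~\ref{le:zero-duality} a saddle point of $\Laug$ in $\CC \times (\real^n \times \real^{nd})$; then the max over $\{\xival{k}\}$ in \eqref{eq:max-def-laug} is attained (each $g_k$ is concave or, in the convex‑in‑$\xi$ case, the max is still finite under the standing density/recession hypothesis on $f$), producing a saddle point of $\Laugt$. Convexity and closedness of the saddle‑point set follow from the general fact (quoted in the Convex analysis subsection) that the set of saddle points of a convex‑concave function is convex, together with closedness of $\CC$ and continuity of $\Laugt$.

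Next, for (ii), let $(\xvb,\lmvb,\bar\nu,\bar\eta,\{(\bar\xi)^k\})$ be a saddle point of $\Laugt$ over $\CC \times (\real^n \times \real^{nd} \times \real^{mN})$. Fixing $(\xvb,\lmvb,\bar\nu,\bar\eta)$, the point $\{(\bar\xi)^k\}$ maximizes $\{\xival{k}\} \mapsto \Laugt(\xvb,\lmvb,\bar\nu,\bar\eta,\{\xival{k}\})$, so by \eqref{eq:max-def-laug} we get $\Laugt(\xvb,\lmvb,\bar\nu,\bar\eta,\{(\bar\xi)^k\}) = \Laug(\xvb,\lmvb,\bar\nu,\bar\eta)$, and moreover for any $(\xvec,\lmvec) \in \CC$, $\Laug(\xvec,\lmvec,\bar\nu,\bar\eta) \ge \Laugt(\xvec,\lmvec,\bar\nu,\bar\eta,\{(\bar\xi)^k\}) \ge \Laugt(\xvb,\lmvb,\bar\nu,\bar\eta,\{(\bar\xi)^k\}) = \Laug(\xvb,\lmvb,\bar\nu,\bar\eta)$, using the saddle inequality of $\Laugt$ in the primal variables. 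For the dual direction, the saddle inequality of $\Laugt$ in $(\nu,\eta)$ reads $\Laugt(\xvb,\lmvb,\nu,\eta,\{(\bar\xi)^k\}) \le \Laugt(\xvb,\lmvb,\bar\nu,\bar\eta,\{(\bar\xi)^k\})$ for all $(\nu,\eta)$; but $\Laugt$ is linear in $(\nu,\eta)$ and agrees with $\Laug$ along these dual directions when $(\xvec,\lmvec)$ is fixed, so $\Laug(\xvb,\lmvb,\nu,\eta) \le \Laug(\xvb,\lmvb,\bar\nu,\bar\eta)$. Hence $(\xvb,\lmvb,\bar\nu,\bar\eta)$ is a saddle point of $\Laug$ over $(\real^{nd} \times \realnonnegative^n) \times (\real^n \times \real^{nd})$ — here hypothesis (i), that the saddle points of $\Laug$ lie in $\CC \times (\real^n \times \real^{nd})$, is what lets us upgrade "saddle over $\CC$" to "saddle over the full primal domain", since the restriction of the min to $\CC$ does not discard the relevant optimizers. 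Lemma~\ref{le:equivalence-aug} then says $(\xvb,\lmvb,\bar\nu,\bar\eta)$ is a saddle point of $L$, and Lemma~\ref{le:zero-duality}(i) gives that $(\xvb,\lmvb)$ is an optimizer of \eqref{eq:stochastic-net-dist-opt}.

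For (iii), given an optimizer $(\xvb,\lmvb) \in \CC$ of \eqref{eq:stochastic-net-dist-opt}, Lemma~\ref{le:zero-duality}(ii) yields $(\bar\nu,\bar\eta)$ with $(\xvb,\lmvb,\bar\nu,\bar\eta)$ a saddle point of $L$; Lemma~\ref{le:equivalence-aug} makes it a saddle point of $\Laug$, which by hypothesis (i) lies in $\CC \times (\real^n \times \real^{nd})$. Set $\{(\bar\xi)^k\}$ to be a maximizer in \eqref{eq:max-def-laug} evaluated at $(\xvb,\lmvb,\bar\nu,\bar\eta)$ (existence as in part (i)). One then checks the two saddle inequalities for $\Laugt$: the primal inequality $\Laugt(\xvb,\lmvb,\bar\nu,\bar\eta,\{(\bar\xi)^k\}) \le \Laugt(\xvec,\lmvec,\bar\nu,\bar\eta,\{(\bar\xi)^k\})$ for $(\xvec,\lmvec) \in \CC$ follows because the left side equals $\Laug(\xvb,\lmvb,\bar\nu,\bar\eta) \le \Laug(\xvec,\lmvec,\bar\nu,\bar\eta)$ (saddle property of $\Laug$), while $\Laug(\xvec,\lmvec,\bar\nu,\bar\eta) \ge \Laugt(\xvec,\lmvec,\bar\nu,\bar\eta,\{(\bar\xi)^k\})$ by \eqref{eq:max-def-laug}; the maximality inequality in $\{\xival{k}\}$ holds by the choice of $\{(\bar\xi)^k\}$; and the dual inequality in $(\nu,\eta)$ is identical to that for $\Laug$ by linearity. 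The one subtle point — and what I expect to be the main obstacle — is justifying, rigorously and under the stated hypotheses only, that the max over $\{\xival{k}\}$ in \eqref{eq:max-def-laug} is actually attained and that the interchange \eqref{eq:min-max-exchange} combined with convex‑concavity (ii) is enough to invoke a minimax theorem on an unbounded domain. This is handled by restricting attention to an appropriate compact sublevel set of $\Laugt$ (using Assumption~\ref{as:standing} and the density/recession condition on $f$ from Section~\ref{sec:problem} to rule out escaping directions), reducing (i) to Theorem~\ref{th:saddle-value}; the argument for (ii)–(iii) is then purely the algebraic manipulation of saddle inequalities sketched above and does not re‑encounter the difficulty.
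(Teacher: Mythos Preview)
Your argument for part (ii) is correct, and your overall plan of leveraging Lemmas~\ref{le:zero-duality} and~\ref{le:equivalence-aug} together with the representation~\eqref{eq:max-def-laug} matches the paper's. However, your direct verification of the primal saddle inequality in part (iii) has a genuine gap. You write that
\[
\Laugt(\xvb,\lmvb,\bar\nu,\bar\eta,\{(\bar\xi)^k\}) = \Laug(\xvb,\lmvb,\bar\nu,\bar\eta) \le \Laug(\xvec,\lmvec,\bar\nu,\bar\eta)
\]
and separately
\[
\Laug(\xvec,\lmvec,\bar\nu,\bar\eta) \ge \Laugt(\xvec,\lmvec,\bar\nu,\bar\eta,\{(\bar\xi)^k\}),
\]
and then conclude $\Laugt(\xvb,\ldots) \le \Laugt(\xvec,\ldots)$. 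But the second inequality points the wrong way: from $A \le C$ and $C \ge B$ you cannot deduce $A \le B$. Concretely, a $\{(\bar\xi)^k\}$ that maximizes $\Laugt(\xvb,\lmvb,\bar\nu,\bar\eta,\cdot)$ need not minorize $\Laug$ at other primal points, so the chain collapses. This is exactly the obstruction that forces one to use hypothesis (iii); your sketch never actually invokes~\eqref{eq:min-max-exchange} in proving (iii), which is a red flag.

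The paper avoids this by first establishing the global min-max equality for $\Laugt$ over $\CC \times (\real^n \times \real^{nd} \times \real^{mN})$. Starting from the known saddle of $\Laug$ on the full domain (Lemmas~\ref{le:zero-duality},~\ref{le:equivalence-aug}) together with hypothesis (i), one gets $\min_{\CC}\max_{\nu,\eta}\Laug = \max_{\nu,\eta}\min_{\CC}\Laug$; substituting~\eqref{eq:max-def-laug} and then applying hypothesis (iii) to swap the inner $\min_{\CC}$ and $\max_{\{\xival{k}\}}$ on the right yields $\min_{\CC}\max_{\nu,\eta,\{\xival{k}\}}\Laugt = \max_{\nu,\eta,\{\xival{k}\}}\min_{\CC}\Laugt$. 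Existence, convexity, and closedness of the saddle set then follow from \cite[Lemma~36.2]{RTR:97} and convex-concavity, and parts (ii)--(iii) drop out because saddle points of $\Laugt$ and of $\Laug$ correspond under this min-max identity (the optimal $\{(\bar\xi)^k\}$ being a maximizer of the inner problem at $(\bar\nu,\bar\eta)$, not just at $(\xvb,\lmvb)$). Your proposal to prove (i) via Theorem~\ref{th:saddle-value} by checking recession conditions directly on $\Laugt$ is a legitimate alternative route for existence, but it requires extra structural assumptions on $f$ beyond hypotheses (i)--(iii), whereas the paper's chain of equalities uses only what is stated.
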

\begin{proof}
  If saddle points of $\Laug$ belong to $\CC \times (\real^n \times
  \real^{nd})$, then according to~\cite[Lemma 36.2]{RTR:97}, we have
  \begin{align*}
    \min_{(\xvec,\lmvec) \in \CC} & \max_{\nu,\eta}
    \Laug(\xvec,\lmvec,\nu,\eta) 
    \\
    & = \max_{\nu, \eta} \min_{(\xvec,\lmvec)
      \in \CC} \Laug(\xvec,\lmvec,\nu,\eta).
  \end{align*}
  Using the definition~\eqref{eq:max-def-laug} of $\Laug$ in the above
  equality, we get
  \begin{align}
    & \min_{(\xvec,\lmvec) \in \CC} \max_{\nu,\eta} 
    \max_{\{\xival{k}\}} \Laugt (\xvec,\lmvec,\nu,\eta,\{\xival{k}\})
    \notag
    \\
    & \quad = \max_{\nu, \eta} \min_{(\xvec,\lmvec) \in \CC} 
    \max_{\{\xival{k}\}} \Laugt(\xvec,\lmvec,\nu,\eta,\{\xival{k}\}).
    \label{eq:min-max-max2}
  \end{align}
  Using~\eqref{eq:min-max-exchange} on the right-hand side of the 
  above expression gives 
  \begin{align*}
    \min_{(\xvec,\lmvec) \in \CC} & \max_{\nu,\eta, \{\xival{k}\}}
    \Laugt(\xvec,\lmvec,\nu,\eta,\{\xival{k}\})  
    \\
    & = \max_{\nu,\eta, \{\xival{k}\}} \min_{(\xvec,\lmvec) \in \CC}
    \Laugt(\xvec,\lmvec,\nu,\eta,\{\xival{k}\}).
  \end{align*}
  From the above equality and the fact that $\Laugt$ is convex-concave
  and finite-valued, we conclude from~\cite[Lemma 36.2]{RTR:97} that
  the set of saddle points of $\Laugt$ over the domain $\CC \times
  (\real^n \times \real^{nd} \times \real^{mN})$ is nonempty. Further,
  this set is closed and convex again due to convexity-concavity of
  $\Laugt$.  Finally, parts (ii) and (iii) follow from combining
  Lemmas~\ref{le:zero-duality} and~\ref{le:equivalence-aug} with the
  following two facts.  First, from~\eqref{eq:min-max-max2}, if
  $(\xvb,\lmvb,\bar \nu, \bar \eta, \{(\bar \xi)^k\})$ is a saddle
  point of $\Laugt$, then $(\xvb,\lmvb,\bar \nu, \bar \eta)$ is a
  saddle point of $\Laug$.  Second, if $(\xvb,\lmvb,\bar \nu, \bar
  \eta)$ is a saddle point of $\Laug$, then there exists $\{ (\bar
  \xi)^k\}$, which is the maximizer of $\{ \xival{k}\} \to
  \Laugt(\xvb,\lmvb,\bar \nu, \bar \eta, \{\xival{k}\})$, such that
  $(\xvb, \lmvb, \bar \nu, \bar \eta, \{ (\bar \xi)^k\})$ is a saddle
  point of $\Laugt$, completing the proof.
\end{proof}

Section~\ref{sec:dist1} describes classes of objective functions for
which the hypotheses of Proposition~\ref{pr:saddle-primal-dual-equiv}
are met.  We have introduced in
Proposition~\ref{pr:saddle-primal-dual-equiv} the set $\CC$ to
increase the level of generality in preparation for the exposition of
our algorithm that follows next. Specifically, since $f$ is not
necessarily convex-concave, the function $\Laugt$ might not be
convex-concave over the entire domain $(\real^{nd} \times
\realnonnegative^n) \times (\real^n \times \real^{nd} \times
\real^{mN})$. For such cases, one can restrict the attention to the
set $\CC \times (\real^n \times \real^{nd} \times \real^{mN})$
provided the hypotheses of the above result are satisfied. As we show
later, when the objective function $f$ is convex-concave, one can
employ the set $\CC = \real^{nd} \times \realnonnegative^n$.

\section{Distributed algorithm design and convergence
analysis}\label{sec:dist}

Here we design and analyze our distributed algorithm to find the
solutions of the optimization
problem~\eqref{eq:stochastic-net-c-opt}. Given the results of
Section~\ref{sec:reformulation}, and specifically
Proposition~\ref{pr:saddle-primal-dual-equiv}, our algorithm seeks to
find the saddle points of $\Laugt$ over the domain $\CC \times
(\real^n \times \real^{nd} \times \real^{mN})$. The dynamics consists
of (projected) gradient-descent of $\Laugt$ in the convex variables
and gradient-ascent in the concave ones. This is popularly termed as
the saddle-point or the primal-dual
dynamics~\cite{AC-BG-JC:17-sicon,AC-EM-JC:16-scl}. 

Given a closed, convex set $\CC \subset \real^{nd} \times
\realnonnegative^n$, the saddle-point dynamics for $\Laugt$ is
\begin{subequations}\label{eq:dyn3}
  \begin{align}
    \begin{bmatrix} \frac{d \xvec}{dt} \\ \frac{d \lmvec}{dt}
    \end{bmatrix} 
    & = \Pi_\CC \Bigl( (\xvec,\lmvec), \begin{bmatrix} - \gradient_{\xvec}
    \Laugt(\xvec,\lmvec,\nu,\eta,\{\xival{k}\})  
    \\
    - \gradient_{\lmvec}
    \Laugt(\xvec,\lmvec,\nu,\eta,\{\xival{k}\}) \end{bmatrix} \Bigr),
    \label{eq:dyn3-2}
    \\
    \frac{d \nu}{dt}  & = \gradient_{\nu}
    \Laugt(\xvec,\lmvec,\nu,\eta,\{\xival{k}\}),\label{eq:dyn3-3}
    \\
    \frac{d \eta}{dt} & = \gradient_{\eta}
    \Laugt(\xvec,\lmvec,\nu,\eta,\{\xival{k}\}),\label{eq:dyn3-4}
    \\
    \frac{d \xival{k}}{dt} & = \gradient_{\xival{k}}
    \Laugt(\xvec,\lmvec,\nu,\eta,\{\xival{k}\}), \, \forall k \in
    \until{N}. \label{eq:dyn3-5}
  \end{align}
\end{subequations}
For convenience, denote~\eqref{eq:dyn3} by the vector field
$\map{\spLaugt}{\real^{nd} \times \realnonnegative^n \times
  \real^{nd+n+mN}}{\real^{nd} \times \realnonnegative^n \times
  \real^{nd+n+mN}}$. In this notation, the first, second, and third
components correspond to the dynamics of $\xvec$, $\lmvec$, and
$(\nu,\eta,\{\xival{k}\})$, respectively.

\begin{remark}\longthmtitle{Distributed implementation
    of~\eqref{eq:dyn3}}\label{re:dist-imp-new} 
  {\rm Here we discuss the distributed character of the
    dynamics~\eqref{eq:dyn3}. For this, we rely on the set $\CC$ being
    decomposable into constraints on individual agent's decision
    variables, i.e., $\CC := \Pi_{i=1}^n \CC_i$ with $\CC_i \subset
    \real^d \times \realnonnegative$.  This allows agents to perform
    the projection in~\eqref{eq:dyn3-2} in a distributed way (we show
    later that the set $\CC$ enjoys this structure for a broad class
    of objective functions~$f$).  Denote the components of the dual
    variables $\eta$ and $\nu$ by $\eta=(\etaval{1}; \etaval{2};
    \dots; \etaval{n})$ and $\nu=(\nuval{1}; \nuval{2}; \dots;
    \nuval{n})$, so that agent $i \in \until{n}$ maintains $\etaval{i}
    \in \real^d$ and $\nuval{i} \in \real$. Further, let $\KK_i
    \subset \until{N}$ be the set of indices representing the samples
    held by~$i$ ($k \in \KK_i$ if and only if $\data{k} \in
    \Xihat_i$). For implementing $\spLaugt$, we assume that each agent
    $i$ maintains and updates the variables
    $(\xval{i},\lmval{i},\nuval{i},\etaval{i},\{\xival{k}\}_{k \in
      \KK_i})$. The collection of these variables for all $i \in
    \until{n}$ forms $(\xvec,\lmvec,\nu,\eta,\{\xival{k}\})$.
    From~\eqref{eq:dyn3},
    the dynamics of variables maintained by~$i$ is
    \begin{align*}
      \Bigl(\frac{d \xval{i}}{dt}; \frac{d \lmval{i}}{dt} \Bigr) &=
      \Pi_{\CC_i} \Bigl( -\frac{1}{N} \sum_{k \in \KK_i}
      \gradient_{x} g_k(\xval{i},\lmval{i},\xival{k}) 
      \\
      & \qquad - \sum_{j \in \NN_i} a_{ij} \bigl((\etaval{i} -
      \etaval{j}) + (\xval{i} - \xval{j})\bigr); 
      \\
      & \qquad - \frac{\rads}{n} - 
      \frac{1}{N} \sum_{k \in \KK_i} \gradient_{\lm}
      g_k(\xval{i},\lmval{i},\xival{k})
      \\
      &  \qquad  - \sum_{j \in \NN_i} a_{ij} \bigl( (\nuval{i} -
    \nuval{j}) + (\lmval{i} - \lmval{j}) \bigr) \Bigr) ,
      \\
      \frac{d \nuval{i}}{dt} & =   \sum_{j \in \NN_i} a_{ij} (\lmval{i} -
      \lmval{j}),
      \\
      \frac{d \etaval{i}}{dt} & =  \sum_{j \in \NN_i} a_{ij} (\xval{i} -
      \xval{j}),
      \\
      \frac{d \xival{k}}{dt} & =  \frac{1}{N} \gradient_{\xi}
      g_k(\xval{i},\lmval{i},\xival{k}), \quad \forall k \in \KK_i.
  \end{align*}
  Observe that the right-hand side of the above dynamics is computable
  by agent $i$ using the variables that it maintains and information
  collected from its neighbors. 
    Hence, $\spLaugt$ can be implemented in
  a distributed manner. Note that the number of variables in the set
  $\{\xival{k}\}$, grows with the size of the data, whereas the size of
  all other variables is independent of the number of samples.
  Further, for any agent $i$, $\{\xival{k}\}_{k \in \KK_i}$ can be
interpreted as its internal state that is not communicated to its
neighbors.
\oprocend }
\end{remark}

The following result establishes the convergence of the dynamics
$\spLaugt$ to the saddle points of $\Laugt$. In our previous
works~\cite{AC-EM-JC:16-scl, AC-BG-JC:17-sicon,AC-EM-SHL-JC:18-tac},
we have extensively analyzed the convergence properties of
saddle-point dynamics associated to convex-concave functions.
However, those results do not apply directly to infer convergence
for~$\spLaugt$ because projection operators are involved in our
algorithm design, $\Laugt$ is linear in both convex
  ($\lmvec$) and concave ($\nu$, $\eta$) variables (which prevents it
  from being strictly convex-concave), and $\Laugt$ is not linear in
  the concave variable $\{\xival{k}\}$.
Nonetheless, we borrow much insight from our previous analysis to
prove the following result.

\begin{theorem}\longthmtitle{Convergence of trajectories of $\spLaugt$
    to the optimizers
    of~\eqref{eq:stochastic-net-dist-opt}}\label{th:convergence-dyn3}
  Suppose the hypotheses of
  Proposition~\ref{pr:saddle-primal-dual-equiv} hold. Assume further
  that there exists a saddle point
  $(\xvec^*,\lmvec^*,\nu^*,\eta^*,\{(\xival{k})^*\})$ of $\Laugt$
  with $(\xvec^*,\lmvec^*) \in \intr(\CC)$ 
  such that the map
  	$\xi \mapsto g_k( (\xvec^*)^{v_k}, (\lmvec^*)^{v_k} , \xi)$ 
  	is strongly concave for all $k \in \until{N}$.
  Then, the trajectories
  of~\eqref{eq:dyn3} starting in $\CC \times \real^n \times \real^{nd}
  \times \real^{mN}$ remain in this set and converge asymptotically to a
  saddle point of $\Laugt$. As a consequence, the $(\xvec,\lmvec)$
  component of the trajectory converges to an optimizer
  of~\eqref{eq:stochastic-net-dist-opt}.
\end{theorem}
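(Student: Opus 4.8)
The plan is to invoke the invariance principle for discontinuous Caratheodory systems (Proposition~\ref{pr:invariance-cara}) with a carefully chosen Lyapunov-type function. The natural candidate is the squared distance to a fixed saddle point $z^* := (\xvec^*,\lmvec^*,\nu^*,\eta^*,\{(\xival{k})^*\})$, namely
\begin{align*}
  V(z) := \tfrac12 \norm{(\xvec,\lmvec) - (\xvec^*,\lmvec^*)}^2 +
  \tfrac12 \norm{(\nu,\eta) - (\nu^*,\eta^*)}^2 + \tfrac12 \sum_{k=1}^N
  \norm{\xival{k} - (\xival{k})^*}^2 .
\end{align*}
First I would establish existence and uniqueness of Caratheodory solutions of~\eqref{eq:dyn3}: the convex-variable dynamics is a projected dynamical system onto the closed convex (indeed polyhedral, after intersecting with the locally-structured $\CC$) set, and the vector field is the gradient of $\Laugt$, which is $C^1$ since $f$ is continuously differentiable; standard results on projected dynamical systems (cf.~\cite{AN-DZ:96}) give existence and uniqueness, and that trajectories starting in $\CC \times \real^n \times \real^{nd} \times \real^{mN}$ remain there. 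Next I would compute $\Lie_{\spLaugt} V$ along the dynamics. Splitting into the projected (convex) part and the unprojected (concave) parts, for the convex block one uses the standard property that for $w \in \CC$ and any $v$, $(\proj_{\CC}\text{-induced vector}) \; \Pi_{\CC}(w,v) = v - \varphi_w$ with $\varphi_w \in N_{\CC}(w)$, hence $((\xvec^*,\lmvec^*) - (\xvec,\lmvec))^\top \varphi_{(\xvec,\lmvec)} \ge 0$ since $(\xvec^*,\lmvec^*) \in \CC$; this lets us drop the projection term with the correct sign. The remaining terms assemble, via the convex-concave inequality for $\Laugt$ (gradient inequality in each block at the saddle point), into
\begin{align*}
  \Lie_{\spLaugt} V(z) \le \Laugt(\xvec^*,\lmvec^*,\nu,\eta,\{\xival{k}\}) -
  \Laugt(\xvec,\lmvec,\nu^*,\eta^*,\{(\xival{k})^*\}) \le 0 ,
\end{align*}
where the last inequality is the saddle-point inequality $\Laugt(\xvec^*,\lmvec^*,\cdot) \le \Laugt(z^*) \le \Laugt(\cdot,\nu^*,\eta^*,\{(\xival{k})^*\})$.

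Having $\Lie_{\spLaugt} V \le 0$, the sublevel sets of $V$ are compact and invariant, so Proposition~\ref{pr:invariance-cara} applies and trajectories converge to the largest invariant set $\MM$ contained in $\{z : \Lie_{\spLaugt} V(z) = 0\}$. The remaining work is to show that on $\MM$ the dynamics is at rest, i.e.\ $\MM$ consists of saddle points, and then to upgrade convergence-to-a-set into convergence-to-a-point. For the first, on the zero-derivative set both inequalities above must hold with equality; the right one forces $(\xvec,\lmvec)$ to minimize $\Laugt(\cdot,\nu^*,\eta^*,\{(\xival{k})^*\})$ and, crucially invoking the strong concavity hypothesis on $\xi \mapsto g_k((\xvec^*)^{v_k},(\lmvec^*)^{v_k},\xi)$, one argues that along any trajectory segment in $\MM$ the $\{\xival{k}\}$ components are pinned to the unique maximizers, hence constant; a bootstrap then shows $(\nu,\eta)$ and $(\xvec,\lmvec)$ are constant as well, so every point of $\MM$ is an equilibrium of $\spLaugt$, which by Proposition~\ref{pr:saddle-primal-dual-equiv} (and the KKT characterization of projected equilibria) is a saddle point of $\Laugt$. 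For the point convergence: the omega-limit set $\Omega(\gamma)$ is contained in $\MM$, hence in the set of saddle points; since $V$ in the statement was built around an arbitrary saddle point, pick $z^{**} \in \Omega(\gamma)$ and rerun the argument with $V_{z^{**}}$ — then $t \mapsto V_{z^{**}}(\gamma(t))$ is nonincreasing and has a subsequence tending to $0$, hence tends to $0$, forcing $\gamma(t) \to z^{**}$. The last clause of the theorem is then immediate from Proposition~\ref{pr:saddle-primal-dual-equiv}(ii).

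The main obstacle I anticipate is the LaSalle step: showing that the largest invariant set in $\{\Lie_{\spLaugt} V = 0\}$ contains only saddle points. The difficulty is that $\Laugt$ is linear in $\lmvec$, $\nu$, $\eta$ and only concave (not strictly) in $\{\xival{k}\}$ a priori, so the zero-derivative condition does not immediately freeze these variables; this is exactly why the theorem assumes the existence of a saddle point with $(\xvec^*,\lmvec^*) \in \intr(\CC)$ and strong concavity of the inner maps at that point. The interior condition is needed to kill the normal-cone term (so that at the relevant equilibria the projection is inactive and the clean gradient equations hold), and the strong concavity is what makes the maximizer in $\xi$ unique and Lipschitz, allowing one to propagate constancy through the coupled chain of components. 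Care is also required because the strong concavity is only assumed at the particular saddle point $(\xvec^*,\lmvec^*)$, so the argument must first localize the invariant set near that saddle point — using that $V_{z^*}$ is nonincreasing and the trajectory is bounded — before exploiting strong concavity along $\MM$.
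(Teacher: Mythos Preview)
Your overall strategy matches the paper's: same Lyapunov function $V$, same bound $\Lie_{\spLaugt} V \le 0$ via the saddle-point inequalities and the normal-cone property of the projection, same invocation of the invariance principle, and the point-convergence upgrade at the end (the paper cites stability of each equilibrium and~\cite{SPB-DSB:03}; your rerun-with-$V_{z^{**}}$ argument is an equally valid alternative). The gap is in your LaSalle step.

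From $\Lie_{\spLaugt}V(z)=0$ you correctly extract the two equalities $\Laugt(\xvec^*,\lmvec^*,\zeta)=\Laugt(\xvec^*,\lmvec^*,\zeta^*)=\Laugt(\xvec,\lmvec,\zeta^*)$. The first, combined with the strong concavity hypothesis, pins $\xival{k}=(\xival{k})^*$; this part is fine, and your concern about ``localizing near the saddle point'' is unnecessary---the equality already evaluates $g_k$ at $((\xvec^*)^{v_k},(\lmvec^*)^{v_k})$, exactly where strong concavity is assumed, regardless of where $(\xvec,\lmvec)$ sits. The second equality tells you $(\xvec,\lmvec)$ is a minimizer of $\Laugt(\cdot,\zeta^*)$ over $\CC$. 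But your ``bootstrap'' from here is too vague: you never explain how to conclude that $(\xvec,\lmvec)$ is in \emph{consensus}, i.e., $\xvec=\ones_n\otimes x$ and $\lmvec=\lm\ones_n$. Without consensus you cannot get $\dot\nu=\Lap\lmvec=0$ and $\dot\eta=(\Lap\otimes\identity{d})\xvec=0$ along trajectories in $\MM$, and then the chain that freezes $(\nu,\eta)$ and subsequently $(\xvec,\lmvec)$ does not start. The map $\Laugt(\cdot,\zeta^*)$ is convex but not strictly so, and nothing in your sketch rules out non-consensus minimizers.

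The paper closes this gap with a concrete computation that exploits the augmentation. Using $(\xvec^*,\lmvec^*)\in\intr(\CC)$ (so $\nabla_{(\xvec,\lmvec)}\Laugt$ vanishes there), one expands the equality $\Laugt(\xvec^*,\lmvec^*,\zeta^*)=\Laugt(\xvec,\lmvec,\zeta^*)$, eliminates the dual variables via $(\Lap\otimes\identity{d})\eta^*=-\nabla_{\xvec}G(\yvec^*)$ and $\Lap\nu^*=-\nabla_{\lmvec}G(\yvec^*)$ (where $G$ collects the non-Laplacian part of $\Laugt(\cdot,\zeta^*)$), and arrives at
\[
  \tfrac12\,\yvec^\top(\Lap\otimes\identity{d+1})\yvec \;=\; G(\yvec^*)-G(\yvec)+(\yvec-\yvec^*)^\top\nabla G(\yvec^*).
\]
The right-hand side is $\le 0$ by first-order convexity of $G$, so the Laplacian quadratic form vanishes and consensus follows. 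This is precisely where the augmented terms $\tfrac12\xvec^\top(\Lap\otimes\identity{d})\xvec+\tfrac12\lmvec^\top\Lap\lmvec$ earn their keep; your proposal never touches them, and that is the missing idea. Once consensus is in hand on $\MM$, the rest goes as you outline: $\dot\nu=\dot\eta=\dot\xival{k}=0$, the $(\xvec,\lmvec)$-dynamics reduces to projected gradient descent of a fixed convex function whose value is shown to be constant along the trajectory, hence every point of $\MM$ is an equilibrium (and therefore a saddle point, by convexity-concavity and the normal-cone optimality condition).
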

\begin{proof}
  We understand the trajectories of~\eqref{eq:dyn3} in the
  Caratheodory sense, cf. Section~\ref{subsec:disc}. Note that by
  definition of the projection operator, any solution $t \mapsto
  (\xvec(t),\lmvec(t),\nu(t),\eta(t),\{\xival{k}(t)\})$
  of~\eqref{eq:dyn3} starting with $(\xvec(0),\lmvec(0)) \in \CC$
  satisfies $(\xvec(t),\lmvec(t)) \in \CC$ for all $t \ge 0$.

  \vspace*{1ex}
  \noindent \emph{LaSalle function.} Let
  $(\xvec^*,\lmvec^*,\nu^*,\eta^*,\{(\xi^*)^{k}\})$ be the equilibrium
  point of $\Laugt$ satisfying $(\xvec^*,\lmvec^*) \in \intr(\CC)$.
  Using the definition of equilibrium point in~\eqref{eq:dyn3-3}
  and~\eqref{eq:dyn3-4}, we get
  \begin{align}\label{eq:consensus-prop-2}
    (\Lap \otimes \identity{d}) \xvec^* = \zeros_{nd} \text{ and } \Lap
    \lmvec^* = \zeros_n .
  \end{align}
  Consider the function $\map{V}{\CC \times
    \real^{nd+n+Nm}}{\realnonnegative}$,
  \begin{align*}
    V(\xvec,\lmvec,\zeta) := \frac{1}{2}(\norm{\xvec - \xvec^*}^2 +
    \norm{\lmvec - \lmvec^*}^2 + \norm{\zeta - \zeta^*}^2),
  \end{align*}
  where, for convenience, we use $\zeta:=(\nu,\eta,\{\xival{k}\})$ and,
  likewise, $\zeta^*:=(\nu^*,\eta^*,\{(\xi^*)^{k}\})$.  
  Writing the dynamics~\eqref{eq:dyn3} as $(-\gradient_{\xvec} \Laugt ; -\gradient_{\lmvec} \Laugt; \gradient_{\zeta} \Laugt) - (\varphi_{\xvec}; \varphi_{\lmvec}; \zeros_{nd+n+Nm})$ where $(\varphi_{\xvec},\varphi_{\lmvec})$ is an element of the normal
  cone $N_{\CC}(\xvec,\lmvec)$ (cf. Section~\ref{subsec:convex-a}) and following the steps of~\cite[Proof of Lemma 4.1]{AC-EM-JC:16-scl}, we obtain that the Lie derivative of $V$ along the dynamics~\eqref{eq:dyn3} satisfies the bound
  \begin{align*}
	 \Lie_{\spLaugt} V(\xvec,\lmvec,\zeta)\le & \Laugt(\xvec^*,\lmvec^*,\zeta) -
	\Laugt(\xvec^*,\lmvec^*,\zeta^*)
	\\
	& \quad + \Laugt(\xvec^*,\lmvec^*,\zeta^*) -
	\Laugt(\xvec,\lmvec,\zeta^*) .
  \end{align*}
  From the definition of saddle point, the sum of the first two terms of
  the right-hand side are nonpositive and so is the sum of the last two.
  Therefore, we conclude  
  \begin{align}\label{eq:auxx}
    \Lie_{\spLaugt} V(\xvec,\lmvec,\zeta) \le 0 .
  \end{align}

  \vspace*{1ex}
  \noindent \emph{Application of LaSalle invariance principle.}  Using
  the property~\eqref{eq:auxx}, we deduce two facts.
  First, given $\delta \ge 0$, any trajectory of~\eqref{eq:dyn3}
  starting in $\SS_{\delta} := V^{-1}(\le \delta) \cap (\CC \times
  \real^{n+nd+mN})$ remains in $\SS_\delta$ at all times. In
  particular, every equilibrium point is stable under the
  dynamics. Second, the omega-limit set of each trajectory
  of~\eqref{eq:dyn3} starting in $\SS_\delta$ is invariant under the
  dynamics.  Thus, from the invariance principle for discontinuous
  dynamical systems, cf. Proposition~\ref{pr:invariance-cara}, any
  solution of~\eqref{eq:dyn3} converges to the largest invariant set
  \begin{align*}
    \MM \subset
  \setdef{(\xvec,\lmvec,\zeta)}{\Lie_{\spLaugt} V(\xvec,\lmvec,\zeta)
    = 0, (\xvec,\lmvec) \in \CC}.
  \end{align*}

  \vspace*{1ex}
  \noindent \emph{Properties of the largest invariant set.}  Let
  $(\xvec,\lmvec,\zeta) \in \MM$. Then, from $\Lie_{\spLaugt}
  V(\xvec,\lmvec,\zeta) = 0$ and our bounding above, we get
  \begin{align}
    \Laugt(\xvec^*,\lmvec^*,\zeta) \!\overset{(a)}{=} \!
    \Laugt(\xvec^*,\lmvec^*,\zeta^*) \! \overset{(b)}{=} \!
    \Laugt(\xvec,\lmvec,\zeta^*). \label{eq:two-eq-2}
  \end{align} 
  Expanding the equality $(a)$ and using~\eqref{eq:consensus-prop-2}, we
  obtain
  \begin{align}
      &  \textstyle  \sum_{k=1}^N g_k( (\xvec^*)^{v_k},
    (\lmvec^*)^{v_k} ,  \xival{k}) \notag
    \\ 
    & \textstyle \qquad \qquad  =  \sum_{k=1}^N g_k( (\xvec^*)^{v_k},
    (\lmvec^*)^{v_k}, (\xi^*)^{k}) . \label{eq:g-eq-new}
  \end{align}
  From the saddle-point property,
  $\{(\xi^*)^{k}\}$ maximizes the function $\{\xival{k}\} \mapsto
  \sum_{k=1}^N g_k((x^*)^{v_k}, (\lm^*)^{v_k} , \xival{k})$. This map
  is strongly concave by hypothesis.  
  Therefore,~\eqref{eq:g-eq-new} yields
    $\xival{k} = (\xi^*)^{k}, \text{ for all } k \in \until{N}$.
  Expanding the equality $(b)$ in~\eqref{eq:two-eq-2} and
  using~\eqref{eq:consensus-prop-2}, we get
  \begin{align}
     &  \textstyle h(\lmvec^*) + \frac{1}{N} \sum_{k=1}^N g_k( \xvec^*)^{v_k},
    (\lmvec^*)^{v_k}, (\xi^*)^{k}) = h(\lmvec) \notag
    \\
    & \textstyle \quad +  \frac{1}{N} \sum_{k=1}^N g_k(\xvec^{v_k},
    \lmvec^{v_k},(\xi^*)^{k}) + (\nu^*)^\top \Lap \lmvec \notag 
    \\
    &  \quad + (\eta^*)^\top (\Lap \otimes \identity{d}) \xvec + \frac{1}{2}
    \xvec^\top (\Lap \otimes \identity{d}) \xvec + \frac{1}{2}
    \lmvec^\top \Lap \lmvec  . \label{eq:equality-b-new}
  \end{align}
  For ease of notation, let $\yvec := (\xvec;\lmvec)$, $\yvec^* :=
  (\xvec^*; \lmvec^*)$, and
  \begin{align*}
    G(\yvec) := h(\lmvec) +  \frac{1}{N} \sum_{k=1}^N g_k( \xvec^{v_k},
    \lmvec^{v_k},(\xi^*)^{k}).
  \end{align*}
  Then, the expression~\eqref{eq:equality-b-new} can be written as
  \begin{align}
    G(\yvec^*) & = G(\yvec) + (\nu^*)^\top \Lap \lmvec + (\eta^*)^\top
    (\Lap \otimes \identity{d}) \xvec \notag
    \\
    & \quad + \frac{1}{2} \yvec^\top (\Lap \otimes \identity{d+1})
    \yvec. \label{eq:equality-b-2-new}
  \end{align}
  From the definition of saddle point, $(\xvec^*,\lmvec^*)$ minimizes
  the function $(\xvec,\lmvec) \mapsto \Laugt(\xvec,\lmvec,\zeta^*)$
  over the domain $\CC$. Moreover, by assumption $(\xvec^*,\lmvec^*)$
  lies in the interior of $\CC$.  Thus,
  \begin{subequations}\label{eq:eq-cond-xl}
    \begin{align}
      \gradient_{\xvec} \Laugt(\xvec^*,\lmvec^*,\zeta^*) & =
      \zeros_{nd},\label{eq:eq-cond-xl-1}
      \\
      \gradient_{\lmvec} \Laugt(\xvec^*,\lmvec^*,\zeta^*) & =
      \zeros_{n}.\label{eq:eq-cond-xl-2}
    \end{align}
  \end{subequations}
  The first of the above equalities yield 
    $(\Lap \otimes \identity{d}) \eta^* = - \gradient_{\xvec}
    G(\yvec^*)$.
  Plugging this equality in~\eqref{eq:equality-b-2-new} and rearranging
  terms gives
  \begin{align}
    \frac{1}{2} \yvec^\top (\Lap \otimes \identity{d+1}) \yvec  & =
    G(\yvec^*) - G(\yvec) \notag
    \\
    & - (\nu^*)^\top \Lap \lmvec  + \xvec^{\top} \gradient_{\xvec}
    G(\yvec^*). \label{eq:equality-b-3-new}
  \end{align}
  Note that $(\xvec^*)^\top \gradient_{\xvec} G(\yvec^*) =
  (\xvec^*)^\top \bigl( \gradient_{\xvec} G(\yvec^*) + (\Lap \otimes
  \identity{d}) \eta^* + (\Lap \otimes \identity{d}) \xvec^* \bigr) =
  (\xvec^*)^\top \gradient_{\xvec} \Laugt(\xvec^*,\lmvec^*,\zeta^*)$,
  where we have used~\eqref{eq:consensus-prop-2}. This in turn equals
  $0$ because of~\eqref{eq:eq-cond-xl-1}.
  Thus, we can rewrite~\eqref{eq:equality-b-3-new} as
  \begin{align}
    \frac{1}{2} \yvec^\top & (\Lap \otimes \identity{d+1}) \yvec  =
    G(\yvec^*) - G(\yvec) \notag
    \\
    & \quad - (\nu^*)^\top \Lap \lmvec + (\xvec - \xvec^*)^\top \gradient_{\xvec}
    G(\yvec^*) \label{eq:equality-b-4-new}
  \end{align}
  Expanding~\eqref{eq:eq-cond-xl-2} gives
  \begin{align}
    \gradient_{\lmvec} G(\yvec^*) + \Lap \nu^* + \frac{1}{2} \Lap
    \lmvec^* = 0. \label{eq:F-grad-zero}
  \end{align}
  Pre-multiplying the above equation with $(\lmvec^*)^\top$ and
  using~\eqref{eq:consensus-prop-2}, we get $(\lmvec^*)^\top \gradient_{\lmvec}
  G(\yvec^*) = 0$ and we can further
  rewrite~\eqref{eq:equality-b-4-new} as
  \begin{align}
    \frac{1}{2} \yvec^\top & (\Lap \otimes \identity{d+1}) \yvec  =
    G(\yvec^*) - G(\yvec)- (\nu^*)^\top \Lap \lmvec  \notag
    \\
     & + (\xvec - \xvec^*)^\top \gradient_{\xvec}
     G(\yvec^*) - (\lmvec^*)^\top \gradient_{\lmvec} G(\yvec^*)
     \label{eq:equality-b-5-new}
  \end{align}
  Using~\eqref{eq:consensus-prop-2} in~\eqref{eq:F-grad-zero} yields
  $\gradient_{\lmvec} G(\yvec^*) = - \Lap \nu^*$. That is, $\lmvec^\top
  \gradient_{\lmvec} G(\yvec^*) = - \lmvec^\top \Lap \nu^*$ which then
  replaced in~\eqref{eq:equality-b-5-new} gives 
  \begin{align*}
    \frac{1}{2} \yvec^\top & (\Lap \otimes \identity{d+1}) \yvec  =
    G(\yvec^*) - G(\yvec) 
    + (\yvec - \yvec^*)^\top \gradient_{\yvec} G(\yvec^*). 
  \end{align*}
  The first-order convexity condition for $F$ takes the form
  \begin{align*}
    G(\yvec) \ge G(\yvec^*) + (\yvec - \yvec^*)^\top \gradient_{\yvec}
    G(\yvec^*) 
  \end{align*}
  Using the previous two expressions, we obtain 
    $\yvec^\top (\Lap \otimes \identity{d+1}) \yvec \le 0$.
  This is only possible if this expression is zero because $\Lap \otimes
  \identity{d+1}$ is positive semidefinite. Equating it to zero, we get
  $\xvec = \ones_n \otimes x$ and $\lmvec = \lm \ones_n$ for some
  $(x,\lm)$ and $(\xvec,\lmvec) \in \CC$.  Collecting our derivations so
  far, we have that if $(\xvec,\lmvec,\zeta) \in \MM$, then
  \begin{subequations}\label{eq:conditions-inv-set-new}
    \begin{align}
      \xival{k}   = (\xi^*)^{k}, & \, \forall k \in \until{N},
      \qquad 
      \xvec  = \ones_n \otimes x, 
      \\
      & \lmvec  = \lm \ones_n, \, \, \, (\xvec,\lmvec) \in \CC.
    \end{align}
  \end{subequations}

  \vspace*{1ex}
  \noindent \emph{Identification of the largest invariant set.}
  Consider a trajectory $t \mapsto (\xvec(t),\lmvec(t),\zeta(t))$
  of~\eqref{eq:dyn3} starting at $(\xvec(0),\lmvec(0),\zeta(0)) \in
  \MM$ and remaining in $\MM$ at all times (recall that $\MM$ is
  invariant).  Then, the trajectory must
  satisfy~\eqref{eq:conditions-inv-set-new} for all $t \ge 0$, that
  is, there exists $t \mapsto (x(t),\lm(t))$ such that
   \begin{subequations}\label{eq:conditions-inv-set-t-new}
    \begin{align}
      \xival{k}(t)  = (\xi^*)^{k},  \, \forall k \in  \until{N},
      \qquad 
      \xvec(t)  = \ones_n \otimes x(t), 
      \\
      \lmvec(t)  = \lm(t) \ones_n, \, \, \, (\xvec(t),\lmvec(t)) \in \CC,
    \end{align}
  \end{subequations}
  for all $t \ge 0$. Plugging~\eqref{eq:conditions-inv-set-t-new}
  in~\eqref{eq:dyn3}, we obtain that for all $t \ge 0$, along the
  considered trajectory, we have  $\dot \nu(t) = \zeros_n$, $\dot
  \eta(t) = \zeros_{nd}$, and $\dot \xi(t) = \zeros_{mN}$. This implies
  that the considered trajectory satisfies the following for all $t \ge
  0$,
  \begin{align*}
    \begin{bmatrix} \frac{d \xvec(t)}{dt} \\ \frac{d \lmvec(t)}{dt}
    \end{bmatrix} \! \! \! = \! \Pi_\CC \Bigl( \! (\xvec(t),\lmvec(t)),
    \! \! 
    \begin{bmatrix} \! - \! \gradient_{\xvec}
      \Laugt(\xvec(t),\lmvec(t),\zeta(0)) 
    \\
    \! - \! \gradient_{\lmvec} \Laugt(\xvec(t),\lmvec(t),\zeta(0))
    \end{bmatrix} \Bigr)
  \end{align*}
  which is a gradient descent dynamics of the convex function
  $(\xvec,\lmvec) \mapsto \Laugt(\xvec,\lmvec, \zeta(0))$ projected over the
  set $\CC$. Thus, either $t \mapsto \Laugt(\xvec(t),
  \lmvec(t),\zeta(0))$ decreases at some $t$ or the right-hand side of
  the above dynamics is zero at all times. Note that for all $t \ge 0$,
  \begin{align*}
    \Laugt(\xvec(t),\lmvec(t),\zeta(0)) \overset{(a)}{=} \Laugt(\ones_n
    \otimes x(t), \lm(t) \ones_n, \zeta(0))
    \\
    \overset{(b)}{=} h(\lm(t) \ones_n) + \frac{1}{N} \sum_{k=1}^N
    g_k(\ones_n \otimes x(t), \lm(t) \ones_n, (\xi^*)^k)
    \\
    \overset{(c)}{=} \Laugt(\ones_n \otimes x(t), \lm(t) \ones_n,
    \zeta^* ) \overset{(d)}{=} \Laugt(\xvec^*,\lmvec^*,\zeta^*).
  \end{align*}
  In the above set of expressions, equalities (a), (b), and (c) follow
  from conditions~\eqref{eq:conditions-inv-set-t-new} and the
  definition of $\Laugt$. Equality~(d) follows
  from~\eqref{eq:two-eq-2}, which holds from every point in~$\MM$. The
  above implies that $t \mapsto \Laugt(\xvec(t),\lmvec(t),\zeta(0))$
  is a constant map.  As a consequence, we conclude that
  $(\xvec(0),\lmvec(0),\zeta(0))$ is an equilibrium point
  of~\eqref{eq:dyn3}. Therefore, we have proved that the set $\MM$ is
  entirely composed of the equilibrium points of the
  dynamics~\eqref{eq:dyn3}.  Convergence to an equilibrium point in
  the set of saddle points for each trajectory follows from this and
  the fact that each equilibrium point is stable, cf.~\cite{SPB-DSB:03}.
\end{proof}

\begin{remark}\longthmtitle{Convergence of algorithm for  nonsmooth
    objective functions}\label{re:nonsmooth-f}
  {\rm Let $f$ satisfy all assumptions outlined in
    Section~\ref{sec:problem} except the differentiability and instead
    assume it is locally Lipschitz. This implies that the gradient of
    $\Laugt$ with respect to variables $\xvec$ and $\{\xival{k}\}$
    need not exist everywhere.  However, the generalized gradients
    exist, see e.g.,~\cite{JC:08-csm-yo} for the
    definition. Therefore, one can replace gradients
    in~\eqref{eq:dyn3-2} and~\eqref{eq:dyn3-5} with the generalized
    counterparts and end up with a differential inclusion for the
    $\{\xival{k}\}$ dynamics and a projected differential inclusion
    for the $\xvec$ dynamics. Although we do not explore it here, we
    believe that, using analysis tools of nonsmooth dynamical systems,
    see~\cite{JC:08-csm-yo} and references therein, one can show that
    the trajectories of the resulting nonsmooth dynamical system
    retain the convergence properties of
    Theorem~\ref{th:convergence-dyn3}. A promising route to establish
    this is to follow the exposition of~\cite{RG:17-scl}, which
    studies saddle-point dynamics for a general class of functions.
    \oprocend }
\end{remark}

  \begin{remark}\longthmtitle{Discrete-time primal-dual
      algorithms}\label{re:discrete}
    {\rm Note that the practical implementation of the saddle-point
      dynamics requires a careful analysis of aspects such as the
      discretization scheme, communication efficiency, and robustness
      to asynchronous updates and packet drops. The Lyapunov-based
      perspective taken here provides an appealing approach to deal
      with these challenges. For example, triggered implementations
      result in communication-efficient discretization schemes, see
      e.g.,~\cite{DR-JC:16-sicon,SSK-JC-SM:15-auto} and input-to-state
      stability provides convergence guarantees under noisy updates,
      see e.g.~\cite{DMN-JC:16-sicon}. These facts motivate us to
      write the algorithm in continuous time and analyze it using
      Lyapunov/LaSalle arguments. Numerous other works analyze the
      saddle-point dynamics in discrete time, see
      e.g.,~\cite{AN-AO:09-jota} for a general setup
      and~\cite{DMN-JC:17-tac} for a distributed implementation.  }
    \oprocend
\end{remark}

\begin{remark}\longthmtitle{Constrained stochastic
    optimization}\label{re:cons-opt}
  {\rm Certain constrained stochastic optimization problems can be
    cast in the form~\eqref{eq:stochastic-net-opt-0} and are therefore
    amenable to the distributed algorithmic solution techniques
    developed here.  Given $\delta \in (0,1)$ and a measurable map
    $\map{F}{\real^n \times \real^m}{\real}$, consider the following
    constrained stochastic optimization problem
    \begin{align}\label{eq:cons-stochastic}
      %
      \underset{x \in \setdef{\real^d}{ \Pb(F(x,\xi) \le 0 ) \ge
          1-\delta}}{\inf} \Eb_{\Pb} \bigl[f(x,\xi)\bigr].
    \end{align}
    The constraint is probabilistic in nature and so is commonly
    referred to as \emph{chance constraint}~\cite{AS-DD-AR:14}.
    One approach to solve this problem is to remove the constraint and add
    a convex function to the objective that penalizes its
    violation. Conditional value-at-risk $(\cvar)$ is one such
    penalizing function. Formally, the $\cvar$ of $\xi \mapsto
    F(x,\xi)$ at level $\delta$ is
    \begin{align*}
      \cvar_\delta(F(x,\xi)) := \inf_{\tau \in \real} \Eb_\Pb \Bigl[
      \tau + \frac{1}{\delta} \max\{F(x,\xi) - \tau,0\} \Bigr].
    \end{align*}
    Roughly speaking, this value represents the expectation of $\xi
    \mapsto F(x,\xi)$ over the set of $\xi$ that has measure $\delta$
    and that contain the highest values of this function.  Note the
    fact~\cite[Chapter 6]{AS-DD-AR:14} that $\cvar_\delta(F(x,\xi))
    \le 0$ implies $\Pb(F(x,\xi) \le 0) \ge 1 - \delta$.  Thus, using
    $\cvar$, problem~\eqref{eq:cons-stochastic} can be approximated by
    \begin{align*}
      \inf_{x \in \real^d} \Eb_{\Pb}\bigl[f(x,\xi)\bigr] + \rho \, \cvar_\delta
      (F(x,\xi)),
    \end{align*}
    where $\rho > 0$ determines the trade-off between the two goals:
    minimizing the objective and satisfying the constraint. By
    invoking the definition of $\cvar$, the above problem can be
    written compactly as
    \begin{align*}
      \inf_{x \in \real^d, \tau \in \real} \Eb_{\Pb}
      \Bigl[ f(x,\xi) + \rho( \tau + \frac{1}{\delta}
      \max \{F(x,\xi)-\tau, 0 \} ) \Bigr].
    \end{align*}
    This can be further recast as a stochastic optimization of the
    form~\eqref{eq:stochastic-net-opt-0}. Therefore, under appropriate
    conditions on the function $F$, one can solve a chance-constrained
    problem in a distributed way under the data-driven optimization
    paradigm using the algorithm design introduced here.  \oprocend }
\end{remark}

\section{Objective functions that meet the algorithm convergence
  criteria}\label{sec:dist1}

In this section we report on two broad classes of objective
functions~$f$ for which the hypotheses of
Proposition~\ref{pr:saddle-primal-dual-equiv} hold. For both cases, we
justify how the dynamics~\eqref{eq:dyn3} serves as the distributed
algorithm for solving~\eqref{eq:stochastic-net-dist-opt}.

\subsection{Convex-concave functions}

Here we focus on objective functions that are convex-concave in
$(x,\xi)$. That is, in addition to $x \mapsto f(x,\xi)$ being convex
for each $\xi \in \real^m$, the function $\xi \mapsto f(x,\xi)$ is
concave for each $x \in \real^d$. We proceed to check the hypotheses
of Theorem~\ref{th:convergence-dyn3}. To this end, let $\CC =
\real^{nd} \times \realnonnegative^n$, which is closed, convex set
with $\intr(\CC) \not = \emptyset$.  Note that $\Laugt$ is
convex-concave on $\CC \times (\real^n \times \real^{nd} \times
\real^{mN})$ as $f$ is convex-concave.  The following result shows
that~\eqref{eq:min-max-exchange} holds.

\begin{lemma}\longthmtitle{Min-max operators can be interchanged
    for~$\Laugt$}\label{le:interchange} 
  Let $f$ be convex-concave in $(x,\xi)$.  Then, for any $(\nu,\eta)
  \in \real^n \times \real^{nd}$, the following holds
  \begin{align}
    &\min_{\xvec,\lmvec \ge \zeros_n} \max_{\{\xival{k}\}}
    \Laugt(\xvec,\lmvec,\nu,\eta,\{\xival{k}\}) \notag
    \\
    & \quad = \max_{\{\xival{k}\}} \min_{\xvec,\lmvec \ge \zeros_n}
    \Laugt(\xvec,\lmvec,\nu,\eta,\{\xival{k}\}).
    \label{eq:minmax-prop-lb}
  \end{align}
\end{lemma}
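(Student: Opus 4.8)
The plan is to derive~\eqref{eq:minmax-prop-lb} from Theorem~\ref{th:saddle-value}, applied to the saddle function $(\xvec,\lmvec,\{\xival{k}\}) \mapsto \Laugt(\xvec,\lmvec,\nu,\eta,\{\xival{k}\})$ for the given, fixed $(\nu,\eta)$, extended to $\real^{nd+n} \times \real^{mN}$ by the value $+\infty$ whenever $\lmvec \not\ge \zeros_n$. Since $f$ is continuously differentiable and convex-concave in $(x,\xi)$ and the remaining terms of $\Laugt$ are smooth (bilinear in the dual variables, convex quadratic in the primal ones, linear in $\lmvec$), this extension is a closed, proper, convex-concave function in $((\xvec,\lmvec),\{\xival{k}\})$ with effective domain $(\real^{nd} \times \realnonnegative^n) \times \real^{mN}$; the inequality ``$\ge$'' in~\eqref{eq:minmax-prop-lb} then holds automatically. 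Hence the whole task reduces to checking the two recession hypotheses of Theorem~\ref{th:saddle-value}, after which that theorem supplies a saddle point in the effective domain together with the asserted min-max equality.

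Hypothesis (ii), concerning the concave variables, is immediate. If $(\xvec,\lmvec)$ lies in the relative interior of $\real^{nd}\times\realnonnegative^n$, i.e.\ $\lmval{i} > 0$ for all $i$, then up to terms not depending on $\{\xival{k}\}$ we have $\Laugt(\xvec,\lmvec,\nu,\eta,\{\xival{k}\}) = \frac{1}{N}\sum_{k=1}^N\bigl(f(\xval{v_k},\xival{k}) - \lmval{v_k}\norm{\xival{k}-\data{k}}^2\bigr)$, and each summand is strongly concave in $\xival{k}$ because $\xi\mapsto f(\xval{v_k},\xi)$ is concave and $\lmval{v_k} > 0$. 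Thus $\{\xival{k}\}\mapsto -\Laugt(\xvec,\lmvec,\nu,\eta,\{\xival{k}\})$ is strongly convex and has no direction of recession at all, let alone one common to all $(\xvec,\lmvec)$ in the relative interior.

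Hypothesis (i), concerning the primal variables, is the crux. For fixed $\{\xival{k}\}$, the map $(\xvec,\lmvec)\mapsto \Laugt(\xvec,\lmvec,\nu,\eta,\{\xival{k}\})$ separates into a function of $\xvec$ and a function of $\lmvec$, each carrying a positive-semidefinite quadratic, namely $\tfrac12\xvec^\top(\Lap\otimes\identity{d})\xvec$ and $\tfrac12\lmvec^\top\Lap\lmvec$. Consequently any direction of recession $(d_{\xvec},d_{\lmvec})$ — which moreover satisfies $d_{\lmvec}\ge\zeros_n$ since $\lmvec$ is constrained to $\realnonnegative^n$ — must lie in the null spaces of these quadratics, forcing $d_{\xvec} = \ones_n\otimes v$ and $d_{\lmvec} = \bar\lm\,\ones_n$ with $v\in\real^d$, $\bar\lm\ge 0$ (and the terms coupling $(\nu,\eta)$ to the primal variables vanish along such directions, as $\Lap\ones_n = \zeros_n$). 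The key point is that hypothesis (i) quantifies over \emph{all} $\{\xival{k}\}$, so, given a candidate nonzero common recession direction $(\ones_n\otimes v,\bar\lm\ones_n)$, I may choose each $\xival{k}$ in the dense set of the standing hypothesis on $f$ — those $\xi$ for which $x\mapsto f(x,\xi)$ has no direction of recession — and, invoking density once more, with $\norm{\xival{k}-\data{k}} < \rad$. Along $(\ones_n\otimes v,\bar\lm\ones_n)$ the aggregate term $\frac1N\sum_k f(\,\cdot\,,\xival{k})$ then has a nonnegative asymptotic rate, strictly positive whenever $v\neq 0$; the term $h$ has rate $\rads\,\bar\lm$; and the Wasserstein penalty $-\frac1N\sum_k\lmval{v_k}\norm{\xival{k}-\data{k}}^2$ has rate $-\frac{\bar\lm}{N}\sum_k\norm{\xival{k}-\data{k}}^2$, whose magnitude is below $\rads\,\bar\lm$ when $\bar\lm>0$. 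Combining the cases $v\neq 0$ and $v=0$ (in which $\bar\lm>0$), the total asymptotic rate is strictly positive, so $(\ones_n\otimes v,\bar\lm\ones_n)$ fails to be a direction of recession for this choice of $\{\xival{k}\}$; hence no nonzero common recession direction exists.

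With both hypotheses verified, Theorem~\ref{th:saddle-value} yields a finite saddle value and a saddle point of $\Laugt(\,\cdot\,,\cdot,\nu,\eta,\,\cdot\,)$, which is exactly~\eqref{eq:minmax-prop-lb}. I expect the verification of hypothesis (i) to be the main obstacle: it requires first reducing the candidate recession directions to the consensus subspace via the quadratic augmentation, and then a careful balancing of the competing asymptotic rates of the aggregate objective, the $h$ term, and the Wasserstein penalty along a consensus direction, which is precisely where the density hypothesis on $f$ and the positivity of $\rad$ enter.
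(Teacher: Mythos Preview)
Your approach is correct and essentially identical to the paper's: both extend $\Laugt(\cdot,\cdot,\nu,\eta,\cdot)$ by $+\infty$ outside $\lmvec\ge\zeros_n$ and check the two recession hypotheses of Theorem~\ref{th:saddle-value}, dispatching (ii) via strict positivity of $\lmvec$ in the relative interior (the paper simply picks $(\ones_{nd},\ones_n)$) and (i) by choosing each $\xival{k}$ in the dense set close enough to $\data{k}$ that the $h$-term outweighs the Wasserstein penalty along consensus directions. The paper's write-up is terser---it fixes one $\{\xibval{k}\}$ and then asserts ``one can show'' the resulting primal function has no direction of recession---whereas you spell out the reduction to the consensus subspace via the Laplacian quadratics and the ensuing rate balancing explicitly.
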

\begin{proof}
  Given any $(\nu,\eta)$, denote the function
  $(\xvec,\lmvec,\{\xival{k}\}) \mapsto
  \Laugt(\xvec,\lmvec,\nu,\eta,\{\xival{k}\})$ by
  $\Laugt^{(\nu,\eta)}$.  Since $f$ is convex-concave, so is
  $\Laugt^{(\nu,\eta)}$ in the variables $( (\xvec,\lmvec),
  \{\xival{k} \})$.  We use Theorem~\ref{th:saddle-value} to prove the
  result.  To do so, let us extend $\Laugt^{(\nu,\eta)}$ over the
  entire domain $(\real^{nd} \times \real^n) \times (\real^{mN})$ as
  \begin{align*}
    \Laugr^{(\nu,\eta)}(\xvec,\lmvec,\{\xival{k}\}) = \begin{cases}
      \Laugt^{(\nu,\eta)}(\xvec,\lmvec,\{\xival{k}\}), & \, \text{ if
      } \lmvec \ge \zeros_n,
      \\
      +\infty, & \, \text{ otherwise}.
    \end{cases}
  \end{align*}
  One can see that $\Laugr^{(\nu,\eta)}$ is closed, proper, and
  convex-concave (cf. Section~\ref{sec:prelims} for definitions).
  Further, following~\cite[Theorem 36.3]{RTR:97}, the
  equality~\eqref{eq:minmax-prop-lb} holds if and only if the
  following holds
  \begin{align*}
    \min_{\xvec,\lmvec} \max_{\{\xival{k}\}} & \,
    \Laugr^{(\nu,\eta)}(\xvec,\lmvec,\{\xival{k}\})
    \\
    & \qquad = \max_{\{\xival{k}\}} \min_{\xvec,\lmvec}
    \Laugr^{(\nu,\eta)}(\xvec,\lmvec,\{\xival{k}\}).
  \end{align*}
  The rest of the proof establishes the above condition by checking
  the hypotheses of Theorem~\ref{th:saddle-value} for
  $\Laugr^{(\nu,\eta)}$. 
  For showing Theorem~\ref{th:saddle-value}(i), it is enough to
  identify $\{\xibval{k}\} \in \real^{mN}$ for which the function
  $(\xvec,\lmvec) \mapsto \Laugr^{(\nu,\eta)}(\xvec, \lmvec,
  \{\xibval{k}\})$ does not have a direction of recession.  By the
  assumptions on $f$, for each $k \in \until{N}$, there exists
  $\xibval{k} \in B_{\eps_N(\beta)\sqrt{N}/\sqrt{2n}}(\data{k})$ such
  that $\ones_n$ and $-\ones_n$ are not directions of recession for
  the function $x \mapsto f(x,\xibval{k})$. Picking these values, one
  has $\norm{\xibval{k} - \data{k}}^2 \le \rads N/2n$ for all
  $k \in \until{N}$. Thus,
  \begin{align*}
    \Laugr^{(\nu,\eta)}(\xvec,\lmvec,\{\xibval{k}\}) = \frac{\rads (z^t \lmvec)}{n} + \frac{1}{N} \sum_{k=1}^N
    f(\xval{v_k}, \xibval{k} )
    \\
    + \nu^\top \Lap \lmvec + \eta^\top (\Lap \otimes \identity{d})
    \xvec + \frac{1}{2} \xvec^\top (\Lap \otimes \identity{d}) \xvec +
    \frac{1}{2} \lmvec^\top \Lap \lmvec,
  \end{align*}
  where $z \in \real^n$ with $z_i > 0$ for all $i \in \until{n}$. 
  One can show that the right-hand side of the above expression as a
  function of $(\xvec,\lmvec)$ does not have a direction of recession,
  that is, Theorem~\ref{th:saddle-value}(i) holds.  Next, we check
  Theorem~\ref{th:saddle-value}(ii). We show that there exists
  $(\xvecb,\lmvecb) \in \ri(\real^{nd} \times \realnonnegative^n)$
  such that the function $\{\xival{k}\} \mapsto
  -\Laugr^{(\nu,\eta)}(\xvecb,\lmvecb,\{\xival{k}\})$ does not have a
  direction of recession. To this end, pick $\xvecb = \ones_{nd}$ and
  $\lmvecb = \ones_n$. Then,
  \begin{align*} \Laugr^{(\nu,\eta)}
    (\xvecb,\lmvecb,\{\xival{k}\}) \! = \! \rads \! + \!
    \frac{1}{N} \sum_{k=1}^N f(\ones_d,\xival{k}) \! - \!
    \norm{\xival{k} - \data{k}}^2.  
  \end{align*} 
  Recall that for any $x \in \real^d$, $\xi \mapsto f(x,\xi)$ is
  concave. Hence, we deduce from the above expression that
  $\Laugr^{(\nu,\eta)}(\xvecb,\lmvecb,\{\xival{k}\}) \to - \infty$ as
  $\norm{\{\xival{k}\}} \to \infty$. Therefore, $\{\xival{k}\} \mapsto
  -\Laugr^{(\nu,\eta)}(\xvecb,\lmvecb,\{\xival{k}\})$ does have a
  direction of recession, completing the proof.
\end{proof}

As a consequence of the above discussion, we conclude that the
hypotheses of Proposition~\ref{pr:saddle-primal-dual-equiv} hold true
for the considered class of objective functions, and we can state,
invoking Theorem~\ref{th:convergence-dyn3}, the following convergence
result.

\begin{corollary}\longthmtitle{Convergence of trajectories of
    $\spLaugt$ for convex-concave
    $f$}\label{cr:convergence-convex-concave}
  Let $f$ be convex-concave in $(x,\xi)$ and $\CC = \real^{nd} \times
  \realnonnegative$. Assume further that there exists a saddle point
  $(\xvec^*,\lmvec^*,\nu^*,\eta^*,\{(\xival{k})^*\})$ of $\Laugt$
  satisfying $\lmvec^* > \zeros_n$. Then, the trajectories
  of~\eqref{eq:dyn3} starting in $\CC \times \real^n \times \real^{nd}
  \times \real^{mN}$ remain in this set and converge asymptotically to
  a saddle point of $\Laugt$. As a consequence, the $(\xvec,\lmvec)$
  component of the trajectory converges to an optimizer
  of~\eqref{eq:stochastic-net-dist-opt}.
\end{corollary}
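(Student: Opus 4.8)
The plan is to show that the hypotheses of Theorem~\ref{th:convergence-dyn3} are satisfied for the choice $\CC = \real^{nd}\times\realnonnegative^n$ and then invoke that result directly. Since those hypotheses presume, in turn, the hypotheses of Proposition~\ref{pr:saddle-primal-dual-equiv}, the first step is to verify conditions (i)--(iii) there. Condition (i) is immediate: the saddle points of $\Laug$ live in $(\real^{nd}\times\realnonnegative^n)\times(\real^n\times\real^{nd})$ by the very definition of $\Laug$, which is exactly $\CC\times(\real^n\times\real^{nd})$. For condition (ii), I would argue that $\Laugt$ is convex-concave on $\CC\times(\real^n\times\real^{nd}\times\real^{mN})$: writing $g_k(x,\lm,\xi)=f(x,\xi)-\lm\norm{\xi-\data{k}}^2$, the map $(x,\lm)\mapsto g_k(x,\lm,\xi)$ is jointly convex, being the sum of $f(\cdot,\xi)$ (convex in $x$, constant in $\lm$) and $-\lm\norm{\xi-\data{k}}^2$ (affine), while $\xi\mapsto g_k(x,\lm,\xi)$ is a sum of two concave functions on the domain $\lm\ge 0$, hence concave; the augmentation and dual-coupling terms are convex in $(\xvec,\lmvec)$ because $\Lap$ and $\Lap\otimes\identity{d}$ are positive semidefinite, and affine in $(\nu,\eta)$. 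Condition (iii) is precisely Lemma~\ref{le:interchange}.

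With Proposition~\ref{pr:saddle-primal-dual-equiv} applicable, the next step is to check the additional saddle-point requirement in Theorem~\ref{th:convergence-dyn3}, namely the existence of a saddle point of $\Laugt$ with $(\xvec^*,\lmvec^*)\in\intr(\CC)$ along which $\xi\mapsto g_k((\xvec^*)^{v_k},(\lmvec^*)^{v_k},\xi)$ is strongly concave for all $k$. Here the corollary's standing assumption $\lmvec^*>\zeros_n$ carries the argument. Since $\intr(\CC)=\real^{nd}\times\realpositive^n$, the inequality $\lmvec^*>\zeros_n$ places $(\xvec^*,\lmvec^*)$ in $\intr(\CC)$. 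Moreover, by Proposition~\ref{pr:saddle-primal-dual-equiv}(ii) the pair $(\xvec^*,\lmvec^*)$ is an optimizer of~\eqref{eq:stochastic-net-dist-opt}, hence feasible, so $\Lap\lmvec^*=\zeros_n$; connectivity of $\GG$ then forces $\lmvec^*=\lm^*\ones_n$ with $\lm^*>0$. Consequently $(\lmvec^*)^{v_k}=\lm^*>0$ for every $k$, and $\xi\mapsto g_k((\xvec^*)^{v_k},(\lmvec^*)^{v_k},\xi)=f((\xvec^*)^{v_k},\xi)-\lm^*\norm{\xi-\data{k}}^2$ is the sum of a concave function and a quadratic that is strongly concave with parameter $2\lm^*$, hence strongly concave.

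Having checked all hypotheses, the conclusion of Theorem~\ref{th:convergence-dyn3} applies verbatim: trajectories of~\eqref{eq:dyn3} starting in $\CC\times\real^n\times\real^{nd}\times\real^{mN}$ stay there and converge to a saddle point of $\Laugt$, whose $(\xvec,\lmvec)$-component is, by Proposition~\ref{pr:saddle-primal-dual-equiv}(ii), an optimizer of~\eqref{eq:stochastic-net-dist-opt}. I do not foresee a serious obstacle, as the corollary is essentially a specialization of Theorem~\ref{th:convergence-dyn3}; the only points needing care are the deduction $\lmvec^*=\lm^*\ones_n$ from the consensus constraint (so that the strong-concavity hypothesis holds for each $k$) and double-checking that neither the quadratic augmentation terms nor the bilinear dual-coupling terms disturb the convex-concave structure of $\Laugt$ over the whole of $\CC$.
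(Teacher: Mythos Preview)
Your proposal is correct and follows exactly the approach the paper takes: the corollary is stated there as an immediate specialization of Theorem~\ref{th:convergence-dyn3}, with the preceding discussion and Lemma~\ref{le:interchange} supplying the verification of the hypotheses of Proposition~\ref{pr:saddle-primal-dual-equiv}. Your write-up is in fact more explicit than the paper's, which leaves the interior and strong-concavity checks implicit; the detour through $\Lap\lmvec^*=\zeros_n$ is unnecessary (componentwise positivity of $\lmvec^*$ already gives $(\lmvec^*)^{v_k}>0$ for each $k$), but it is harmless.
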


It is important to note that $\CC = \Pi_{i=1} (\real^d \times
\realnonnegative)$ and thus the projection in~\eqref{eq:dyn3-2} can be
executed by individual agents. Following Remark~\ref{re:dist-imp-new},
the dynamics~\eqref{eq:dyn3} is implementable in a distributed way.

\subsection{Convex-convex function}

Here we focus on objective functions for which both $x \mapsto
f(x,\xi)$ and $\xi \mapsto f(x,\xi)$ are convex maps for all $x \in
\real^d$ and $\xi \in \real^m$. Note that $f$ need not be jointly
convex in $x$ and $\xi$. We further divide this classification into
two.

\subsubsection{Quadratic function in $\xi$}\label{se:quadratic}

Assume additionally that the function $f$ is of the form
\begin{align}
  f(x,\xi) := \xi^\top Q \xi + x^\top R \xi + \ell(x), \label{eq:quad-f}
\end{align}
where $Q \in \real^{m \times m}$ is positive definite, $R \in \real^{d
  \times m}$, and $\ell$ is a continuously differentiable convex
function.  Our next result is useful in identifying a domain that
contains the saddle points of $\Laug$ over $(\real^{nd} \times
\realnonnegative^n) \times (\real^n \times \real^{nd})$.

\begin{lemma}\longthmtitle{Characterizing where the
    objective function  of~\eqref{eq:stochastic-net-dist-opt} is
    finite-valued}\label{le:finite-F-quad} 
  Assume $f$ is of the form~\eqref{eq:quad-f}. Then, the function $F$
  defined in~\eqref{eq:F} is finite-valued only if $\lmval{i}
  \ge \lm_{\max}(Q)$ for all $i \in \until{n}$.
\end{lemma}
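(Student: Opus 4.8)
The plan is to argue by contraposition: I will show that if $\lmval{i} < \lm_{\max}(Q)$ for some $i$, then the inner maximization associated to a sample held by agent $i$ diverges to $+\infty$, which forces $F(\xvec,\lmvec) = +\infty$.

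First, fix $i \in \until{n}$. Since $\Xihat_i$ is nonempty by assumption, there exists $k \in \until{N}$ with $v_k = i$, i.e., agent $i$ holds the sample $\data{k}$. Substituting the quadratic form~\eqref{eq:quad-f} into the definition of $g_k$ and expanding $-\lm\norm{\xi - \data{k}}^2$, the $\xi$-dependence of $g_k(\xval{i},\lmval{i},\xi)$ becomes
\[
  g_k(\xval{i},\lmval{i},\xi) = \xi^\top (Q - \lmval{i} \identity{m}) \xi + \bigl( (\xval{i})^\top R + 2\lmval{i} (\data{k})^\top \bigr) \xi + \bigl( \ell(\xval{i}) - \lmval{i} \norm{\data{k}}^2 \bigr),
\]
which is a quadratic function of $\xi$ with leading matrix $Q - \lmval{i}\identity{m}$.

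Next, suppose $\lmval{i} < \lm_{\max}(Q)$. Then $Q - \lmval{i}\identity{m}$ admits a positive eigenvalue; let $u \in \real^m$ be a corresponding unit eigenvector, so that $u^\top (Q - \lmval{i}\identity{m}) u = \mu > 0$. Evaluating the displayed expression along $\xi = tu$ for $t \in \real$ yields $\mu t^2$ plus a term that is affine in $t$, hence $g_k(\xval{i},\lmval{i}, tu) \to +\infty$ as $t \to +\infty$. Therefore $\max_{\xi \in \real^m} g_k(\xval{i},\lmval{i},\xi) = +\infty$.

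Finally, I would note that the remaining terms in $F$ cannot cancel this divergence: $h(\lmvec)$ is finite for finite $\lmvec$, and for every $j \in \until{N}$ one has $\max_{\xi \in \real^m} g_j(\xval{v_j},\lmval{v_j},\xi) \ge g_j(\xval{v_j},\lmval{v_j},\data{j}) = f(\xval{v_j},\data{j}) \in \real$, so each inner maximum is bounded below by a real number. Hence the term indexed by $k$ forces $F(\xvec,\lmvec) = +\infty$, contradicting finiteness. Consequently, every point at which $F$ is finite-valued must satisfy $\lmval{i} \ge \lm_{\max}(Q)$ for all $i \in \until{n}$. I do not anticipate a real obstacle here; the only point requiring care is ruling out an $\infty - \infty$ cancellation across the sum defining $F$, which is dispatched by the uniform lower bound obtained from evaluating each $g_j$ at $\xi = \data{j}$.
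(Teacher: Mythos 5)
Your proposal is correct and follows essentially the same route as the paper's proof: expand $g_k$ as a quadratic in $\xi$ with leading matrix $Q-\lmval{i}\identity{m}$, restrict to the line spanned by a top eigenvector to show the inner maximum is $+\infty$, and rule out cancellation by bounding each inner maximum below (the paper simply notes $\max_\xi g_k > -\infty$; your evaluation at $\xi=\data{j}$ makes the same point explicit). No gaps.
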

\begin{proof}
  Assume there exists $\tilde i \in \until{n}$ such that
  $\lmval{\tilde i} < \lm_{\max} (Q)$. We wish to show that
  $F(\xvec,\lmvec) = +\infty$ in this case. For any $k$ such that
  $\data{k} \in \Xihat_{\tilde i}$, we have
  \begin{align*}
    g_k(\xval{\tilde i}, \lmval{\tilde i}, \xi) & = \xi^\top (Q -
    \lmval{\tilde i} \identity{m}) \xi + (\xval{\tilde i})^\top R \xi +
    2 \lmval{\tilde i} (\data{k})^\top \xi 
    \\
    & \quad +
    \ell(\xval{\tilde i}) - \lmval{\tilde i} \norm{\data{k}}^2.
  \end{align*}
  Let $w_{\max}(Q) \in \real^m$ be an eigenvector of $Q$ corresponding to the
  eigenvalue $\lm_{\max}(Q)$. Parametrizing $\xi = \alpha w_{\max}(Q)$,
  we obtain
  \begin{align*}
    g_k & (\xval{\tilde i}, \lmval{\tilde i}, \alpha w_{\max} (Q))  =
    \alpha^2 (\lm_{max}(Q) - \lmval{\tilde i}) \norm{w_{\max}(Q)}^2 
    \\
    & + \alpha \Bigl( (\xval{\tilde i})^\top R
    + 2 \lmval{\tilde i} (\data{k})^\top \Bigr)  w_{\max}(Q) 
    + \ell(\xval{\tilde i}) - \lmval{\tilde i} \norm{\data{k}}^2.
  \end{align*}
  Thus, we get $\max_{\alpha} g_k(\xval{\tilde i}, \lmval{\tilde i},
  \alpha w_{\max} (Q)) = + \infty$ and so $\max_{\xi} g_k(\xval{\tilde
  i}, \lmval{\tilde i}, \xi) = +\infty$. Further note that for any $i$
  and $k$ with $\data{k} \in \Xihat_i$, $\max_{\xi}
  g_k(\xval{i},\lmval{i},\xi) > - \infty$. This implies that
  $\sum_{k=1}^N \max_{\xi} g_k(\xval{v_k}, \lmval{v_k},\xi) = + \infty$
  and so $F(\xvec,\lmvec) = +\infty$.
\end{proof}

The above result implies that the optimizers
of~\eqref{eq:stochastic-net-dist-opt} for objective functions of the
form~\eqref{eq:quad-f} belong to the domain
\begin{align}
  \CC := \real^{nd} \times \setdef{\lmvec \in
  \realnonnegative^n}{\lmvec \ge \lm_{\max}(Q) \ones_n}. \label{eq:C-quad-f}
\end{align}
Therefore, the saddle points of $\Laug$ over the domain $(\real^{nd}
\times \realnonnegative^n) \times (\real^n \times \real^{nd})$ are
contained in the set $\CC \times (\real^n \times \real^{nd})$. Note
that $\CC$ is closed, convex with a nonempty interior. Furthermore,
following the proof of Lemma~\ref{le:finite-F-quad}, one can show that
$\Laugt$ is convex-concave on $\CC \times (\real^n \times \real^{nd}
\times \real^{mN})$. An easy way to validate this fact is by noting
that the Hessian of $\Laugt$ with respect to the convex (concave)
variables is positive (negative) semidefinite. Finally, repeating the
proof of Lemma~\ref{le:interchange}, we arrive at the
equality~\eqref{eq:min-max-exchange}. Using these facts in
Theorem~\ref{th:convergence-dyn3} yields the following result.

\begin{corollary}\longthmtitle{Convergence of trajectories of $\spLaugt$
for quadratic $f$}\label{cr:convergence-quadratic}
  Let $f$ be of the form~\eqref{eq:quad-f} and $\CC$ be given
  in~\eqref{eq:C-quad-f}. 
  Assume further that there exists a saddle point
  $(\xvec^*,\lmvec^*,\nu^*,\eta^*,\{(\xival{k})^*\})$ of $\Laugt$
  satisfying $\lmvec^* > \lm_{\max}(Q) \ones_n$. Then, the trajectories
  of~\eqref{eq:dyn3} starting in $\CC \times \real^n \times \real^{nd}
  \times \real^{mN}$ remain in this set and converge asymptotically to a
  saddle point of $\Laugt$. As a consequence, the $(\xvec,\lmvec)$
  component of the trajectory converges to an optimizer
  of~\eqref{eq:stochastic-net-dist-opt}.
\end{corollary}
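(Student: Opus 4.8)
The plan is to verify, for the set $\CC$ in~\eqref{eq:C-quad-f}, the three hypotheses of Proposition~\ref{pr:saddle-primal-dual-equiv} together with the extra hypothesis of Theorem~\ref{th:convergence-dyn3}, and then invoke Theorem~\ref{th:convergence-dyn3}. First I would establish containment of the saddle points: by Lemma~\ref{le:finite-F-quad}, the objective $F$ of~\eqref{eq:stochastic-net-dist-opt} is finite-valued only when $\lmvec \ge \lm_{\max}(Q)\ones_n$; and since, by Lemmas~\ref{le:zero-duality} and~\ref{le:equivalence-aug}, the primal component of any saddle point of $\Laug$ is an optimizer of~\eqref{eq:stochastic-net-dist-opt} and hence renders $F$ finite, every such saddle point lies in $\CC \times (\real^n \times \real^{nd})$. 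The set $\CC$ is closed and convex, and since $Q$ is positive definite, $\intr(\CC) = \real^{nd} \times \setdef{\lmvec \in \realnonnegative^n}{\lmvec > \lm_{\max}(Q)\ones_n} \ne \emptyset$; this gives hypothesis (i).

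Next I would check convex-concavity of $\Laugt$ on $\CC \times (\real^n \times \real^{nd} \times \real^{mN})$. For $f$ as in~\eqref{eq:quad-f}, the building block $g_k(x,\lm,\xi) = \xi^\top Q\xi + x^\top R\xi + \ell(x) - \lm\norm{\xi - \data{k}}^2$ is linear in $\lm$, convex in $x$ (since $\ell$ is convex), and satisfies $\gradient_{\xi\xi} g_k = 2(Q - \lm\identity{m})$, which is negative semidefinite exactly when $\lm \ge \lm_{\max}(Q)$. Combining this with the convexity of $h$, the bilinearity of $\nu^\top\Lap\lmvec$ and $\eta^\top(\Lap \otimes \identity{d})\xvec$, and the positive semidefiniteness of $\Lap$ in the two quadratic consensus penalties, the Hessian of $\Laugt$ is positive semidefinite in $(\xvec,\lmvec)$ and negative semidefinite in $(\nu,\eta,\{\xival{k}\})$ throughout $\CC \times (\real^n \times \real^{nd} \times \real^{mN})$, which is hypothesis (ii).

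The third hypothesis, the min-max exchange~\eqref{eq:min-max-exchange}, I would obtain by rerunning the proof of Lemma~\ref{le:interchange} on the shifted domain $\lmvec \ge \lm_{\max}(Q)\ones_n$: extend $\Laugt$ by $+\infty$ outside this set to a closed proper convex-concave function and check the two recession conditions of Theorem~\ref{th:saddle-value}. For the concave variables one uses $\gradient_{\xi\xi} g_k = 2(Q - \lm\identity{m}) \prec 0$ whenever $\lm > \lm_{\max}(Q)$, so any $\lmvecb$ in the relative interior with $\lmvecb > \lm_{\max}(Q)\ones_n$ makes $\{\xival{k}\} \mapsto \Laugt$ strongly concave, which precludes a common direction of recession; for the convex variables, the standing density assumption on $f$ supplies, exactly as in Lemma~\ref{le:interchange}, points $\{\xibval{k}\}$ near the respective samples $\data{k}$ for which $(\xvec,\lmvec) \mapsto \Laugt(\cdot,\{\xibval{k}\})$ has no direction of recession, with $h(\lmvec)$ and the consensus penalty controlling the $\lmvec$- and $\xvec$-directions. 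This completes the verification of the hypotheses of Proposition~\ref{pr:saddle-primal-dual-equiv}.

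Finally, the saddle point posited in the statement, which satisfies $\lmvec^* > \lm_{\max}(Q)\ones_n$, lies in $\intr(\CC) \times (\real^n \times \real^{nd} \times \real^{mN})$, and for each $k$ the map $\xi \mapsto g_k((\xvec^*)^{v_k}, (\lmvec^*)^{v_k}, \xi)$ has Hessian $2(Q - (\lmvec^*)^{v_k}\identity{m}) \preceq 2(\lm_{\max}(Q) - (\lmvec^*)^{v_k})\identity{m} \prec 0$ and is therefore strongly concave. Theorem~\ref{th:convergence-dyn3} then applies verbatim and yields the invariance of $\CC \times \real^n \times \real^{nd} \times \real^{mN}$ and the asymptotic convergence of trajectories to a saddle point of $\Laugt$, whose $(\xvec,\lmvec)$-component is an optimizer of~\eqref{eq:stochastic-net-dist-opt} by Proposition~\ref{pr:saddle-primal-dual-equiv}(ii). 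I expect the only genuinely non-mechanical step to be the third one: adapting the recession-direction analysis of Lemma~\ref{le:interchange} from $\lmvec \ge \zeros_n$ to $\lmvec \ge \lm_{\max}(Q)\ones_n$ and confirming that both no-common-direction-of-recession conditions of Theorem~\ref{th:saddle-value} survive the shift. Everything else reduces to Lemma~\ref{le:finite-F-quad}, the explicit Hessian of the quadratic $f$, and the already-proved Theorem~\ref{th:convergence-dyn3}.
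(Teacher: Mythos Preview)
Your proposal is correct and follows essentially the same route as the paper: verify hypotheses~(i)--(iii) of Proposition~\ref{pr:saddle-primal-dual-equiv} via Lemma~\ref{le:finite-F-quad}, a Hessian check, and a rerun of the argument of Lemma~\ref{le:interchange} on the shifted domain, then feed the assumed saddle point (which lies in $\intr(\CC)$ and makes each $\xi \mapsto g_k$ strongly concave) into Theorem~\ref{th:convergence-dyn3}. Your write-up is in fact more explicit than the paper's, which only sketches these steps in the paragraph preceding the corollary.
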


Note that $\CC$ given in~\eqref{eq:C-quad-f} can be written as $\CC =
\Pi_{i=1}^n (\real^d \times \setdef{\lm \in \realnonnegative}{\lm \ge
\lm_{\max}(Q)})$. Thus, following Remark~\ref{re:dist-imp-new}, the
dynamics~\eqref{eq:dyn3} for this case can be
implemented in a distributed manner. 

\subsubsection{Least-squares problem}\label{se:least-squares}
Let $d = m$ and assume additionally that the function $f$ is of the
form
\begin{align}
  f(x,\xi) := a (\xi_m - (\xi_{1:m-1}; 1)^\top x)^2,
  \label{eq:f-least-sq} 
\end{align} 
where $a > 0$ and $\xi_{1:m-1}$ denotes the vector $\xi$ without the
last component $\xi_m$. Note that $f$ corresponds to the objective
function for a least-squares problem.  Further, note that it cannot be
written in the form~\eqref{eq:quad-f}, as can be seen from its
equivalent expression
\begin{align*}
  f(x,\xi) & = a \Bigl( \xi^\top (-x_{1:m-1};1) (-x_{1:m-1};1)^\top
  \xi
  \\
  & \qquad \qquad \qquad - 2 x_m (-x_{1:m-1};1)^\top \xi + x_m^2
  \Bigr).
\end{align*}
Our first step is to characterize, similarly to
Lemma~\ref{le:finite-F-quad}, the set over which the objective
function~\eqref{eq:F} takes finite values.

\begin{lemma}\longthmtitle{Characterizing where the objective function
    of~\eqref{eq:stochastic-net-dist-opt} is
    finite-valued}\label{le:finite-F-least-sq} 
  Assume $f$ is of the form~\eqref{eq:f-least-sq}. Then, the function
  $F$ defined in~\eqref{eq:F} is finite-valued only if
  $\lmval{i} \ge a \norm{(\xval{i}_{1:m-1}; 1)}^2$ for all $i \in
  \until{n}$.
\end{lemma}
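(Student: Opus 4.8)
The plan is to mimic the proof of Lemma~\ref{le:finite-F-quad}: I argue the contrapositive, showing that if some agent's multiplier falls below the stated threshold, then the associated inner maximization in the definition~\eqref{eq:F} of $F$ is unbounded above, which forces $F(\xvec,\lmvec)=+\infty$. So suppose there exists $\tilde i\in\until{n}$ with $\lmval{\tilde i} < a\norm{(\xval{\tilde i}_{1:m-1};1)}^2$. Since $\Xihat_{\tilde i}$ is nonempty, pick $k$ with $\data{k}\in\Xihat_{\tilde i}$, so $v_k=\tilde i$. The first step is the elementary observation that, with $v:=(-\xval{\tilde i}_{1:m-1};1)\in\real^m$ (so $\norm{v}^2=\norm{(\xval{\tilde i}_{1:m-1};1)}^2\ge 1$), the residual in~\eqref{eq:f-least-sq} rewrites as $\xi_m-(\xi_{1:m-1};1)^\top\xval{\tilde i}=\xi^\top v-\xval{\tilde i}_m$, hence
\[
  g_k(\xval{\tilde i},\lmval{\tilde i},\xi)=a\,(\xi^\top v-\xval{\tilde i}_m)^2-\lmval{\tilde i}\norm{\xi-\data{k}}^2 .
\]

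The key step is to probe $g_k$ along the ray $\xi=\data{k}+\alpha v$, $\alpha\in\real$, which is the direction of the gradient of the least-squares residual. Along this ray $\xi^\top v-\xval{\tilde i}_m$ is affine in $\alpha$ with slope $\norm{v}^2$, so its square is a quadratic in $\alpha$ with leading coefficient $\norm{v}^4$, while $\norm{\xi-\data{k}}^2=\alpha^2\norm{v}^2$. Therefore $\alpha\mapsto g_k(\xval{\tilde i},\lmval{\tilde i},\data{k}+\alpha v)$ is a quadratic whose coefficient of $\alpha^2$ equals $\norm{v}^2\bigl(a\norm{v}^2-\lmval{\tilde i}\bigr)$, which is strictly positive under our hypothesis since $\norm{v}^2\ge 1$. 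Hence $\max_{\xi\in\real^m} g_k(\xval{\tilde i},\lmval{\tilde i},\xi)=+\infty$.

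To finish, I note that for every index $k'$ one has $\max_{\xi} g_{k'}(\xval{v_{k'}},\lmval{v_{k'}},\xi)\ge g_{k'}(\xval{v_{k'}},\lmval{v_{k'}},\data{k'})=f(\xval{v_{k'}},\data{k'})\ge 0>-\infty$ because $a>0$; thus summing over $k'$ in~\eqref{eq:F} leaves $F(\xvec,\lmvec)=+\infty$, which is exactly the contrapositive of the claim. I do not expect any genuine obstacle: the only point that requires care is identifying the correct probing direction $v$, along which the squared residual grows quadratically with coefficient $a\norm{v}^4$ against a penalty growing only like $\lmval{\tilde i}\norm{v}^2\alpha^2$; every other computation is routine. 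As a byproduct (used subsequently in the paper), this localizes the optimizers of~\eqref{eq:stochastic-net-dist-opt} to a set of the form $\CC=\Pi_{i=1}^n\bigl(\real^d\times\setdef{\lm\in\realnonnegative}{\lm\ge a\norm{(\xval{i}_{1:m-1};1)}^2}\bigr)$, but that lies beyond the present statement.
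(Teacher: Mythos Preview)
Your proof is correct and follows essentially the same route as the paper's: argue the contrapositive, probe $g_k$ along the direction $v=(-\xval{\tilde i}_{1:m-1};1)$, observe the resulting quadratic in $\alpha$ has strictly positive leading coefficient $\norm{v}^2\bigl(a\norm{v}^2-\lmval{\tilde i}\bigr)$, and conclude via the elementary lower bound on the remaining terms. The only cosmetic difference is that you anchor the ray at $\data{k}$ (so the penalty becomes exactly $\lmval{\tilde i}\alpha^2\norm{v}^2$), whereas the paper anchors it at the origin; either choice yields the same leading coefficient and the same conclusion.
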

\begin{proof}
  The proof mimics the steps of the proof of
  Lemma~\ref{le:finite-F-quad}.  Assume there exists $\tilde i \in
  \until{n}$ such that $\lmval{\tilde i} < a \norm{(\xval{\tilde
      i}_{1:m-1}; 1)}^2$.  For any $k$ such that $\data{k} \in
  \Xihat_{\tilde i}$, we have
  \begin{align*}
    g_k(\xval{\tilde i}, \lmval{\tilde i}, \xi) = a (\xi_m -
    (\xi_{1:m-1}; 1)^\top \xval{\tilde i})^2 - \lmval{\tilde i}
    \norm{\xi - \data{k}}^2. 
  \end{align*}
  Parametrizing $\xi = \alpha (-\xval{\tilde i}_{1:m-1} ; 1)$,
  we obtain
  \begin{align*}
    g_k(\xval{\tilde i}, \lmval{\tilde i}, \alpha (-\xval{\tilde
    i}_{1:m-1} ; 1) ) & = a \Bigl(\alpha \norm{-\xval{\tilde i}_{1:m-1}
    ; 1}^2 - \xval{\tilde i}_m \Bigr)^2 
    \\
    & - \lmval{\tilde i} \norm{\alpha
    (-\xval{\tilde i}_{1:m-1} ;1) - \data{k}}^2.
  \end{align*}
  This is a quadratic function in the parameter $\alpha$ and the
  coefficient of the second-order term is $\norm{ (-\xval{\tilde
  i}_{1:m-1}; 1)}^2 (a \norm{ (-\xval{\tilde
  i}_{1:m-1}; 1)}^2 - \lmval{\tilde i})$. This coefficient is positive 
  by the assumption stipulated above. Therefore, $\max_{\xi}
  g_k(\xval{\tilde i}, \lmval{\tilde i}, \xi) = + \infty$. 
  Since $\max_{\xi} g_k(\xval{v_k}, \lmval{v_k}, \xi) > - \infty$ for
  all $k \in \until{N}$, we
  conclude that $F(\xvec,\lmvec) = + \infty$.
\end{proof}

Guided by the above result, let
\begin{align}
  \CC \! := \! \real^{nd} \! \times \! \setdef{\lmvec \in \realnonnegative^n \! }{\! \lmval{i}
  \! \ge \! a \norm{(\xval{i}_{1:m-1};1)}^2, \forall i \! \in \! \until{n}}. \label{eq:C-least-sq}
\end{align}
As a consequence of Lemma~\ref{le:finite-F-least-sq}, the optimizers
of~\eqref{eq:stochastic-net-dist-opt} belong to $\CC$ and so, the
saddle points of $\Laug$ over the domain $(\real^{nd} \times
\realnonnegative^n) \times (\real^n \times \real^{nd})$ are contained
in the set $\CC \times (\real^n \times \real^{nd})$. Further, $\CC$ is
closed, convex with a nonempty interior and the function $\Laugt$ is
convex-concave on $\CC \times (\real^n \times \real^{nd} \times
\real^{mN})$. Finally, one can show that~\eqref{eq:min-max-exchange}
holds in this case. Using these facts in
Theorem~\ref{th:convergence-dyn3} yields the following result.

\begin{corollary}\longthmtitle{Convergence of trajectories of
    $\spLaugt$ for least squares
    problem}\label{cr:convergence-least-sq}
  Let $f$ be of the form~\eqref{eq:f-least-sq} and $\CC$ be given
  in~\eqref{eq:C-least-sq}.  Assume further that there exists a saddle
  point $(\xvec^*,\lmvec^*,\nu^*,\eta^*,\{(\xival{k})^*\})$ of
  $\Laugt$ satisfying $(\xvec^*,\lmvec^*) \in \intr(\CC)$. Then, the
  trajectories of~\eqref{eq:dyn3} starting in $\CC \times \real^n
  \times \real^{nd} \times \real^{mN}$ remain in this set and converge
  asymptotically to a saddle point of $\Laugt$. As a consequence, the
  $(\xvec,\lmvec)$ component of the trajectory converges to an
  optimizer of~\eqref{eq:stochastic-net-dist-opt}.
\end{corollary}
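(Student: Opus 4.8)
The plan is to derive Corollary~\ref{cr:convergence-least-sq} as an instance of Theorem~\ref{th:convergence-dyn3}, which reduces the task to checking, for the objective~\eqref{eq:f-least-sq} and the set $\CC$ of~\eqref{eq:C-least-sq}, the three hypotheses of Proposition~\ref{pr:saddle-primal-dual-equiv} together with the additional strong-concavity requirement of Theorem~\ref{th:convergence-dyn3}. For hypothesis~(i) of Proposition~\ref{pr:saddle-primal-dual-equiv}, note that $\CC$ is closed and convex, being a product over agents of the epigraph of the convex quadratic $\xval{i} \mapsto a\norm{(\xval{i}_{1:m-1};1)}^2$, and has nonempty interior; moreover, by Lemma~\ref{le:finite-F-least-sq} the function $F$ of~\eqref{eq:F} is finite only on $\CC$, so every optimizer of~\eqref{eq:stochastic-net-dist-opt} lies in $\CC$, and hence, via the correspondence in Lemmas~\ref{le:zero-duality} and~\ref{le:equivalence-aug}, every saddle point of $\Laug$ lies in $\CC \times (\real^n \times \real^{nd})$.

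Next I would verify hypothesis~(ii), convex-concavity of $\Laugt$ on $\CC \times (\real^n \times \real^{nd} \times \real^{mN})$, through its second derivatives. In the variables $(\xvec,\lmvec)$ the only nonaffine contributions are $\frac{1}{N}\sum_k f(\xval{v_k},\xival{k})$, convex in $\xvec$, and the consensus penalties $\frac{1}{2}\xvec^\top(\Lap\otimes\identity{d})\xvec$ and $\frac{1}{2}\lmvec^\top\Lap\lmvec$, convex since $\Lap\succeq 0$, while $\lmvec$ enters $g_k$ linearly and $(\nu,\eta)$ enter $\Laugt$ affinely. In the variable $\{\xival{k}\}$ the only nonaffine contribution is $\frac{1}{N}\sum_k g_k(\xval{v_k},\lmval{v_k},\xival{k})$, whose Hessian in $\xival{k}$ is $\frac{2}{N}\bigl(a\,w_k w_k^\top - \lmval{v_k}\identity{m}\bigr)$ with $w_k := (-\xval{v_k}_{1:m-1};1)$; this is negative semidefinite precisely when $\lmval{v_k} \ge a\norm{w_k}^2 = a\norm{(\xval{v_k}_{1:m-1};1)}^2$, i.e.\ exactly on $\CC$, which is by design. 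For hypothesis~(iii), the interchange~\eqref{eq:min-max-exchange}, I would follow the template of Lemma~\ref{le:interchange}: fixing $(\nu,\eta)$, extend $(\xvec,\lmvec,\{\xival{k}\}) \mapsto \Laugt(\xvec,\lmvec,\nu,\eta,\{\xival{k}\})$ to $+\infty$ off $\CC$ to obtain a closed proper convex-concave function $\Laugr^{(\nu,\eta)}$ on $(\real^{nd}\times\real^n)\times\real^{mN}$, for which~\eqref{eq:min-max-exchange} is just the min-max equality, and then apply Theorem~\ref{th:saddle-value}. For condition~(i) of that theorem I would evaluate the slice at $\{\xival{k}\}=\{\data{k}\}$ itself, where the $\lmvec$-dependent part of $g_k$ vanishes, so the slice is $h(\lmvec)$ plus consensus terms plus $\frac{1}{N}\sum_k f(\xval{v_k},\data{k})$ on $\CC$, and argue it has no direction of recession: such a direction must lie in the recession cone of $\CC$, which—because $a\norm{(\cdot\,;1)}^2$ grows quadratically while $\lmvec$ grows at most linearly—forces the first $m-1$ entries of each block of the $\xvec$-direction to vanish and the $\lmvec$-direction to be nonnegative; the quadratic penalties then force the direction to have the form $(\ones_n\otimes\delta x,\delta\lm\ones_n)$, the linear term $h(\lmvec)=\rads\ones_n^\top\lmvec/n$ forces $\delta\lm=0$, and the residual $a(\data{k}_m-(\data{k}_{1:m-1};1)^\top(\xval{v_k}+t\,\delta x))^2$, a nonconstant parabola in $t$ whenever the last entry of $\delta x$ is nonzero, forces $\delta x=\zeros$. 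For condition~(ii) I would pick any interior point $(\xvecb,\lmvecb)$ of $\CC$, so that each $\lmvecb$-block strictly exceeds $a$ times the squared norm of the corresponding lifted $\xvecb$-block, whence $\{\xival{k}\}\mapsto\Laugr^{(\nu,\eta)}(\xvecb,\lmvecb,\{\xival{k}\})$ has negative definite Hessian in each $\xival{k}$, hence is strongly concave and radially unbounded below, so $-\Laugr^{(\nu,\eta)}(\xvecb,\lmvecb,\cdot)$ has no direction of recession.

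It then remains to supply the extra hypothesis of Theorem~\ref{th:convergence-dyn3}: since the assumed saddle point has $(\xvec^*,\lmvec^*)\in\intr(\CC)$, we have $(\lmvec^*)^{v_k} > a\norm{((\xvec^*)^{v_k}_{1:m-1};1)}^2$ for every $k$, so by the Hessian computation above $\gradient_{\xi\xi}g_k((\xvec^*)^{v_k},(\lmvec^*)^{v_k},\xi) = 2\bigl(a\,\widehat w_k \widehat w_k^\top - (\lmvec^*)^{v_k}\identity{m}\bigr)\prec 0$ with $\widehat w_k := (-(\xvec^*)^{v_k}_{1:m-1};1)$, i.e.\ each $\xi\mapsto g_k((\xvec^*)^{v_k},(\lmvec^*)^{v_k},\xi)$ is strongly concave. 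With all the hypotheses of Proposition~\ref{pr:saddle-primal-dual-equiv} and Theorem~\ref{th:convergence-dyn3} in place, the claimed convergence of the trajectories of~\eqref{eq:dyn3} and of their $(\xvec,\lmvec)$-component to an optimizer of~\eqref{eq:stochastic-net-dist-opt} follows. I expect the main obstacle to be condition~(i) of Theorem~\ref{th:saddle-value} inside step~(iii): unlike in Lemma~\ref{le:interchange}, where the feasible set is a product of a subspace and an orthant, here it is the curved epigraphical set $\CC$, so the recession-cone bookkeeping requires the extra observation that a ray staying in $\CC$ can carry no component along the $\xval{i}_{1:m-1}$ directions; once that is in hand, the remaining verifications are routine adaptations of the convex-concave and quadratic cases.
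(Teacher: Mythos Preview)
Your proposal is correct and follows exactly the route the paper takes: the paper establishes Corollary~\ref{cr:convergence-least-sq} by noting that, with $\CC$ as in~\eqref{eq:C-least-sq}, Lemma~\ref{le:finite-F-least-sq} yields hypothesis~(i) of Proposition~\ref{pr:saddle-primal-dual-equiv}, that convex-concavity of $\Laugt$ on $\CC$ (your Hessian computation) gives hypothesis~(ii), that the interchange~\eqref{eq:min-max-exchange} holds (the paper merely asserts ``one can show'' this, and your recession-cone argument via Theorem~\ref{th:saddle-value} with the slice at $\{\xival{k}\}=\{\data{k}\}$ is precisely the intended verification), and then invokes Theorem~\ref{th:convergence-dyn3}. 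Your additional observation that the recession cone of $\CC$ forces the $\xval{i}_{1:m-1}$ components of any recession direction to vanish is the key step the paper leaves implicit, and you have it right.
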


In this case too, the saddle-point dynamics~\eqref{eq:dyn3} is
amenable to distributed implementation,
cf. Remark~\ref{re:dist-imp-new}, as one can write $\CC = \Pi_{i=1}^n
\setdef{(x,\lm) \in \real^d \times \realnonnegative}{\lm \ge a
  \norm{(x_{1:m-1} ; 1)}^2}$.

\section{Simulations}\label{sec:sims}

Here we illustrate the application of the distributed
algorithm~\eqref{eq:dyn3} to find a data-driven solution for the
regression problem with quadratic loss function and an affine
predictor~\cite[Chapter 3]{BS-AJS:02}, commonly termed as the
least-squares problem. This problem shows up in many
  applications, for example, in distributed estimation and target
  tracking.  Assume $n=10$ agents with communication topology defined
by an undirected ring with additional edges $\{(1,4), (2,5), (3,7),
(6,10)\}$.
The weight of each edge is equal to one.  We consider data points of
the form $\data{k} = (\what^{k},\yhat^{k})$ consisting of the input
$\what^{k} \in \real^4$ and the output $\yhat^{k} \in \real$ pairs.
The objective is to find an affine predictor $x \in \real^5$ using the
dataset such that, ideally, for any new data point $\xi = (w,y)$, the
predictor $x^\top(w;1)$ is equal to $y$. One way of finding such a
predictor $x$ is to solve the following problem
\begin{align}\label{eq:reg}
  \inf_{x} \Eb_{\Pb} \Bigl[f(x,w,y)\Bigr]
\end{align}
where $\Pb$ is the probability distribution of the data $(w,y)$ and
$f:\real^5 \times \real^5 \to \real$ is the quadratic loss function,
i.e., $f(x,w,y) = (x^\top(w;1) - y)^2$, corresponding to the case
considered in  Section~\ref{se:least-squares}.

To find the data-driven solution, we assume that each agent in the
network has $30$ i.i.d samples of $(w,y)$ and hence $N = 300$ is the
total number of samples. The dataset is generated by assuming the
input vector $w$ having a standard multivariate normal distribution,
that is, zero mean and covariance as the identity matrix
$\identity{4}$. The output $y$ is assigned values $y = [1, 4, 3, 2]*w
+ v$ where $v$ is a random variable, uniformly distributed over the
interval $[-1,1]$. This defines completely the distribution $\Pb$ of
$(w,y)$.  Let 
$\rad = 0.05$.
This value is assumed to be computed by the agents beforehand. This
defines completely the distributed optimization
problem~\eqref{eq:stochastic-net-dist-opt}.

\begin{figure}
  \centering
  \subfloat[$\xvec$]{\includegraphics[width=0.65\linewidth]{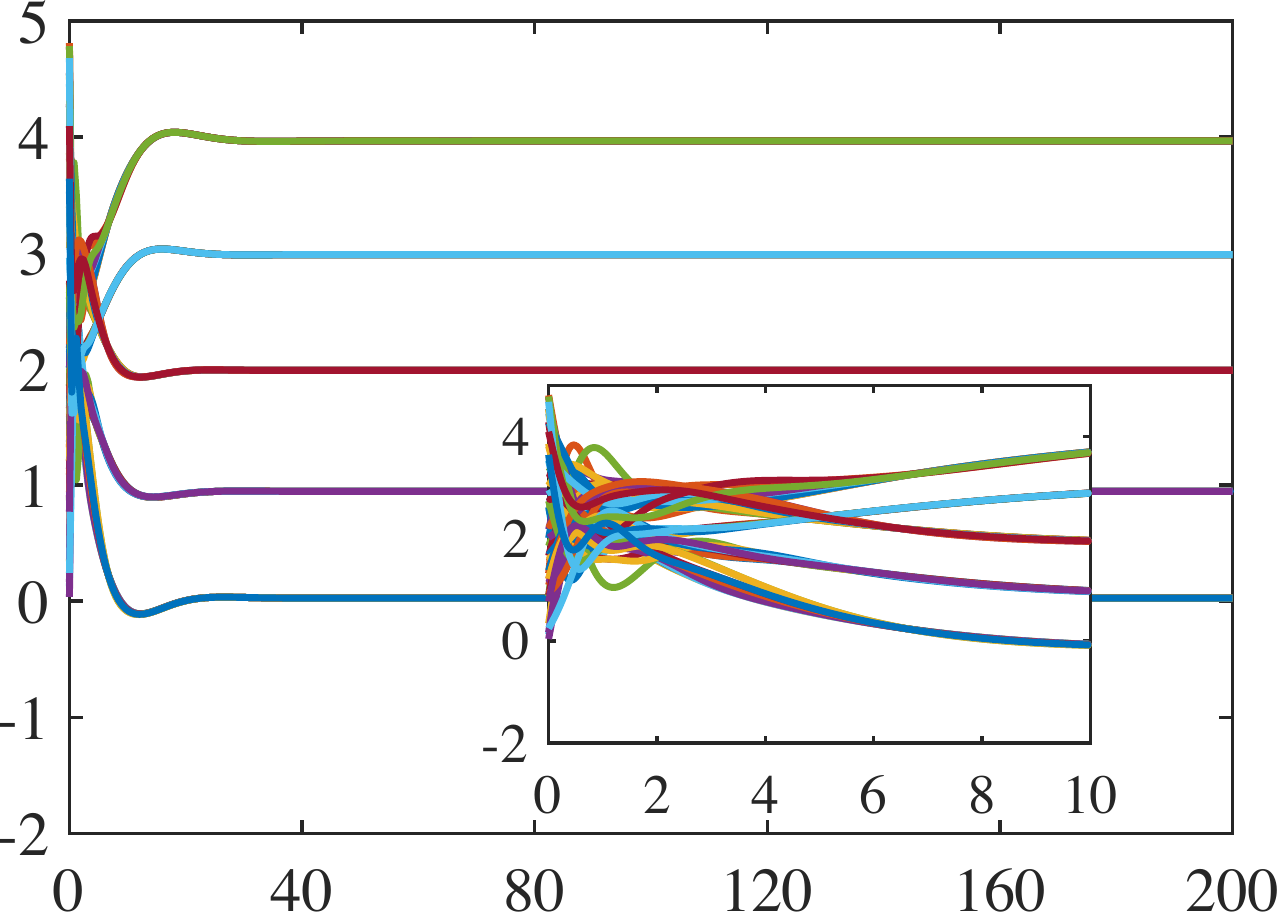}}
  \\
  \subfloat[$\lmvec$]{\includegraphics[width=0.65\linewidth]{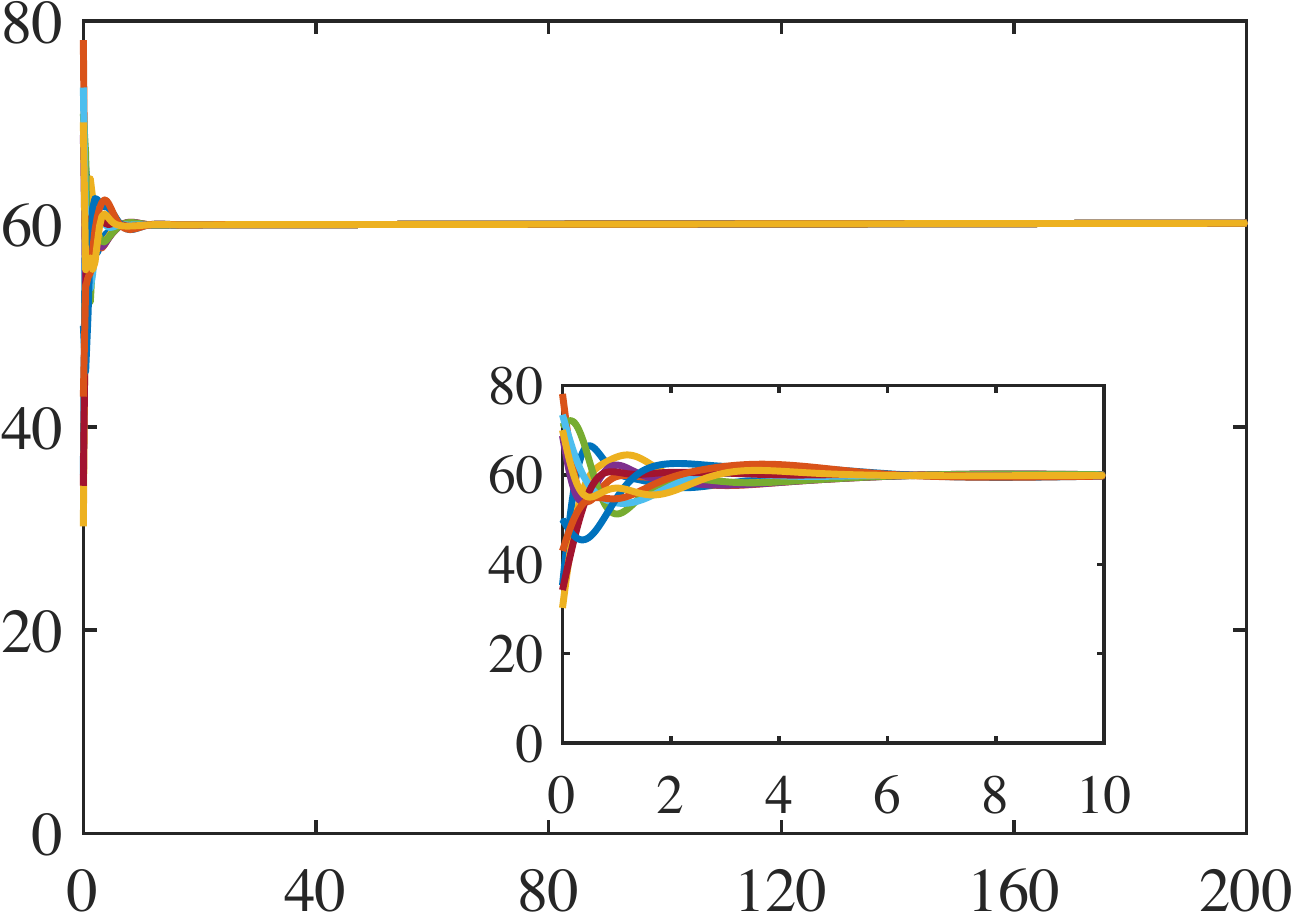}}
  \caption{Illustration of the execution of the
    dynamics~\eqref{eq:dyn3} to find a data-driven solution of the
    regression problem~\eqref{eq:reg}. Plots (a) and (b) depict the
    evolution of the primal variables of the distributed optimization
    problem~\eqref{eq:stochastic-net-dist-opt} defined
    for~\eqref{eq:reg} with $\rad = 0.05$ (for the sake of
    simplicity we have not shown the dual variables). The number of
    agents is $10$ and each agent collects $30$ i.i.d samples of the
    random variable. The initial condition $(\xvec(0),\lmvec(0))$ is
    chosen randomly from the set $[0,5]^{50} \times [30,80]^{10}$ and
    $\nu(0) = \zeros_{10}$, $\eta(0) = \zeros_{50}$, and $\xival{k}(0)
    = \zeros_5$ for all $k \in \until{N}$.  The primal variables
    converge to $\xvec = \ones_{10} \otimes x^*$ and $\lmvec = \lm^*
    \ones_{10}$ with $x^* = (0.9460; 3.9624; 2.9840; 1.9878; 0.0268)$  
    and $\lm^* = 58.4100$.  This is an equilibrium point of $\spLaugt$
    as well as an optimizer of~\eqref{eq:stochastic-net-dist-opt}. The
    agents reach consensus early in the execution. Therefore, we only
    observe $5$ (resp. $1$) curves in the plot of $\xvec$
    (resp. $\lmvec$), each corresponding to a component of $x$
    (resp. $\lm$). To depict this consensus clearly, we provided 
    the zoomed part of the initial $10$ 
    seconds for both $\xvec$ and $\lmvec$ plots. }\label{fig:one}
\end{figure}

Figure~\ref{fig:one} shows the execution of the distributed
algorithm~\eqref{eq:dyn3} that solves this problem. The trajectories
converge to an equilibrium of the dynamics~\eqref{eq:dyn3} whose
$(\xvec,\lmvec)$ component corresponds to an optimizer
of~\eqref{eq:stochastic-net-dist-opt}, consistent with
Corollary~\ref{cr:convergence-least-sq}. Furthermore, due to
  the projection operator in the dynamics, trajectories are contained
  in the set $\CC$ given in~\eqref{eq:C-least-sq} with $a =1$. Note
  that if one knows beforehand that
  $\lm^* \ge a \norm{(x^{*}_{1:m-1};1)}^2$ for some $a > 1$, then one
could further restrict the domain of the dynamics.

To evaluate the quality of the obtained solution, we compute the
average value of the loss function $f$ for a randomly generated
validation dataset consisting of $N_{\val} = 10^4$ data points
$\{(\what^{k}_{\val},\yhat^{k}_{\val})\}_{k=1}^{N_{\val}}$. These
points are i.i.d with the same distribution as that of the training
dataset generated above. Given the obtained solution $(\ones_{10}
\otimes x^*, \lm^* \ones_{10})$, see Figure~\ref{fig:one}, we evaluate
\begin{align}\label{eq:fval}
  f_{\mathrm{val}}^{N_{\val}}(x^*) = \frac{1}{N_{\val}}
  \sum_{k=1}^{N_{\val}} f(x^*,\what_{\val}^k,\yhat_{\val}^k)
\end{align}
and get $f_{\mathrm{val}}^{N_{\val}}(x^*) = 0.3387$. This is the
average loss for the solution $x^*$ obtained by the agents cooperating
with each other, essentially fusing the information of the $300$ data
points. Note that each agent individually can also solve a data-driven
solution with the samples gathered by it. However, the solution
obtained in such a manner, in general, incurs a higher average
loss. In the current setup, if agent $1$
solves~\eqref{eq:stochastic-net-c-opt} only with the data available to
it (and keeping other parameters equal), then it gets the optimizer as
$x^{\mathrm{opt,1}} = (0.8548; 3.8933; 2.8623; 2.1317; 0.2227)$.
Using the validation dataset, we obtain
$f_{\mathrm{val}}^{N_{\val}}(x^{\mathrm{opt,1}}) = 0.4520$, which is
significantly greater than $f_{\mathrm{val}}^{N_{\val}}(x^*)$. 
This shows the value of cooperation, that
is, fusing the information contained in the data available to
different agents leads to an optimizer with better out-of-sample
performance. To highlight this fact further, Figure~\ref{fig:loss}
shows the effect of the number of cooperating agents on the average
loss incurred by the obtained solution to the data-driven optimization
problem. As the plot shows, the improvement in performance due to
coordination becomes more prominent as the size of the coordination
agents grows.

\begin{figure}
  \centering
	  \includegraphics[width
    = 0.3 \textwidth]{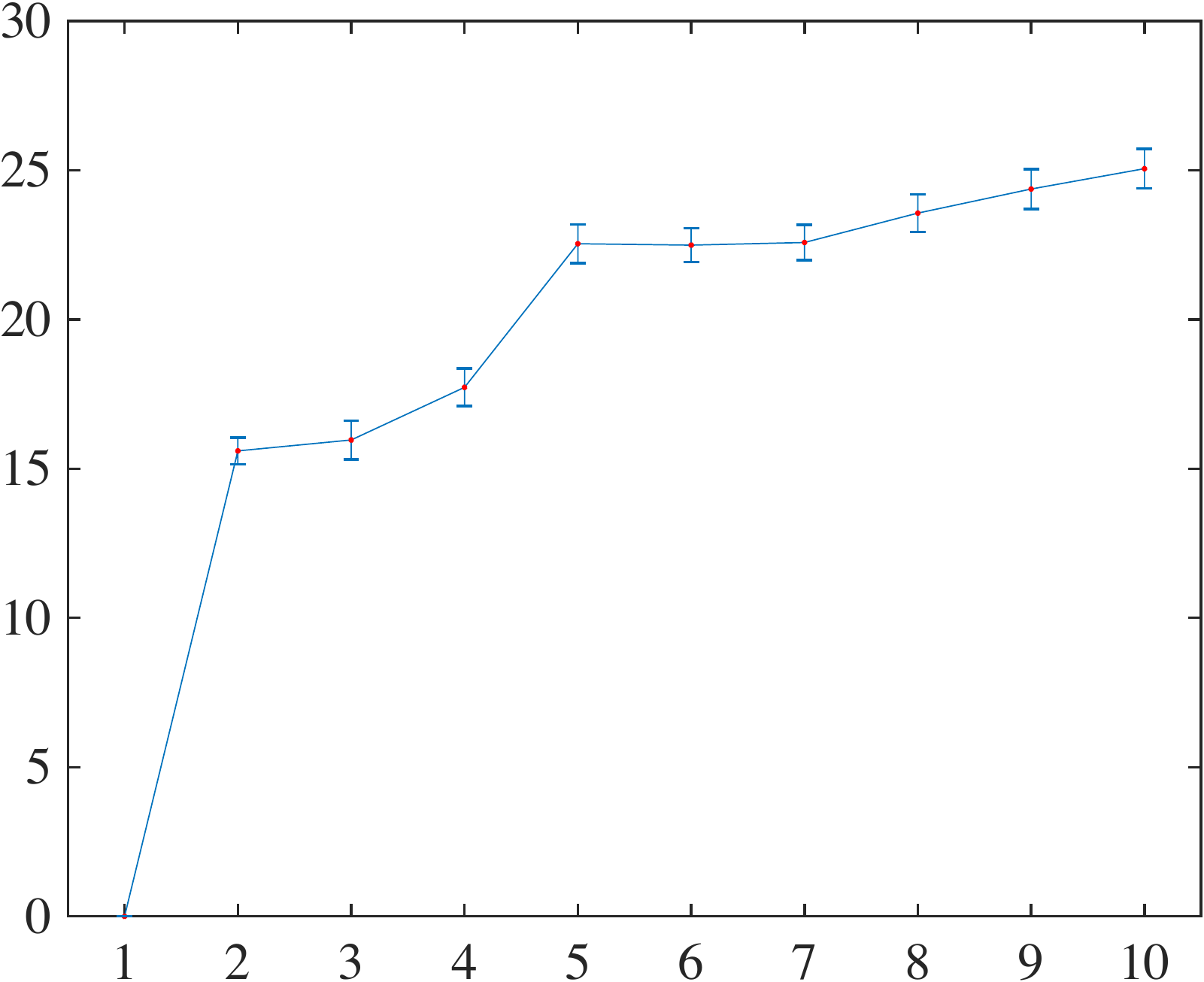}
    \caption{Relative benefit of cooperation versus isolation. The
      $x$-axis represents the number of agents cooperating in our
      example. The $y$-axis plots the function
      $R(i):=\frac{f_{\mathrm{val}}^{N_{\val}}(x^{\mathrm{opt,1}}) -
        f_{\mathrm{val}}^{N_{\val}}
        (x^{\mathrm{opt,i}})}{f_{\mathrm{val}}^{N_{\val}}
        (x^{\mathrm{opt,1}})} \times 100 \% $. Here, the function
      $f_{\mathrm{val}}^{N_{\val}}$ is given in~\eqref{eq:fval} and
      $x^{\mathrm{opt,i}}$ denotes the solution
      of~\eqref{eq:stochastic-net-c-opt} determined by $i$ cooperating
      agents. That is, $R(i)$ for each $i \in \until{n}$ represents
      the relative decrease in the average loss when $i$ agents
      cooperate, taking the base case as no cooperation ($i=1$).
      Thus, as more agents cooperate, the average loss decreases. The
      error bars represent the standard deviation of $R(i)$ values as
      we carry out $100$ runs of the simulation, randomly generating
      different validation data each time.
    } \label{fig:loss}
\end{figure}

\section{Conclusions}\label{sec:conclusions}

We have considered a cooperative stochastic optimization problem,
where a group of agents rely on their individually collected data to
collectively determine a data-driven solution with guaranteed
out-of-sample performance.  Our technical approach has proceeded by
first developing a reformulation in the form of a distributed
optimization problem, leading us to the identification of an augmented
Lagrangian function whose saddle points have a one-to-one
correspondence with the primal-dual optimizers.  This characterization
relies upon certain interchangeability properties between the min and
max operators. Our discussion has identified several classes of
objective functions for which these properties hold: convex-concave
functions, convex-convex functions quadratic in the data, and
convex-convex functions associated to least-squares problems.
Building on the analytical results, we have designed a distributed
saddle-point coordination algorithm where agents share their
individual estimates about the solution, not the collected data. We
have also formally established the asymptotic convergence of the
algorithm to the solution of the cooperative stochastic optimization
problem.  Future work will explore the characterization of the
algorithm convergence rate, the design of strategies capable of tracking the solution of the
stochastic optimization problem when new data becomes available in an
online fashion, and the analysis of scenarios with network chance
constraints.

\end{document}